\documentclass{article}
\usepackage[utf8]{inputenc}
\usepackage{authblk}
\usepackage[linesnumbered,lined,ruled,commentsnumbered]{algorithm2e}

\title{From NeurODEs to AutoencODEs: \\a mean-field control framework for width-varying Neural Networks}
\author[1]{Cristina Cipriani \thanks{cristina.cipriani@ma.tum.de}}
\author[2]{Massimo Fornasier \thanks{massimo.fornasier@cit.tum.de}}
\author[3]{Alessandro Scagliotti \thanks{scag@ma.tum.de}}
\affil[1,2,3]{Technical University Munich, Department of Mathematics, Munich, Germany}
\affil[1,2]{Munich Data Science Institute (MDSI), Munich, Germany}
\affil[1,2,3]{Munich Center for Machine Learning (MCML), Munich, Germany}
\date{}

\usepackage[maxbibnames=99]{biblatex}
\addbibresource{references.bib}
\usepackage{graphicx}
\usepackage{amsthm}

\newcommand{\mycomment}[1]{}
\newcommand{\MF}{{\mathcal{F}}}

\newcommand{\MG}{{\mathcal{G}}}
\newcommand{\R}{\mathbb{R}}
\newcommand{\RR}{\mathbb{R}}

\newcommand{\MC}{\mathcal{C}}

\newcommand{\BarR}{\Bar{R}}
\newcommand{\Bart}{\Bar{t}}

\newcommand{\A}{\mathcal{A}}
\newcommand{\I}{\mathcal{I}}
\newcommand{\mc}[1]{\mathcal{#1}}
\newcommand{\norm}[1]{\left\lVert#1 \, \right\rVert}
\newcommand{\BPhi}{\boldsymbol{\Phi}}
\newcommand{\supp}{\textnormal{supp}}

\newtheorem{thm}{Theorem}[section]
\newtheorem{proposition}[thm]{Proposition}
\newtheorem{cor}[thm]{Corollary}
\newtheorem{lem}[thm]{Lemma}
\newtheorem{rmk}{Remark}[section]
\newtheorem{definition}[thm]{Definition}

\renewcommand{\tilde}{\widetilde}
\renewcommand{\hat}{\widehat}
\renewcommand{\bar}{\overline}
\newtheorem{assum}{Assumption}

\RestyleAlgo{ruled}
\SetKwComment{Comment}{// }{}

\usepackage{amssymb, color}
\usepackage[margin=1in]{geometry}
\usepackage{amsfonts} 
\usepackage{empheq}
\usepackage{xcolor}
\usepackage{nicematrix}
\usepackage{multirow}

\numberwithin{equation}{section}
\providecommand{\keywords}[1]
{
  \small	
  \textbf{\textit{Keywords---}} #1
}

\begin{document}

\maketitle

\begin{abstract}
The connection between Residual Neural Networks (ResNets) and continuous-time control systems (known as NeurODEs) has led to a mathematical analysis of neural networks which has provided interesting results of both theoretical and practical significance. However, by construction, NeurODEs have been limited to describing constant-width layers, making them unsuitable for modeling deep learning architectures with layers of variable width.
In this paper, we propose a continuous-time Autoencoder, which we call AutoencODE, based on a modification of the controlled field that drives the dynamics. This adaptation enables the extension of the mean-field control framework originally devised for conventional NeurODEs.
In this setting, we tackle the case of low Tikhonov regularization, resulting in potentially non-convex cost landscapes. While the global results obtained for high Tikhonov regularization may not hold globally, we show that many of them can be recovered in regions where the loss function is locally convex.
Inspired by our theoretical findings, we develop a training method tailored to this specific type of Autoencoders with residual connections, and we validate our approach through numerical experiments conducted on various examples.
\end{abstract}

\keywords{Machine Learning, Optimal Control, Gradient Flow, Minimising Movement Scheme, Autoencoders}

\section{Introduction}
In recent years, the field of artificial intelligence has witnessed remarkable progress across diverse domains, including computer vision and natural language processing. In particular, neural networks have emerged as a prominent tool, revolutionizing numerous machine learning tasks. Consequently, there is an urgent demand for a robust mathematical framework to analyze their intricate characteristics. 
A deep neural network can be seen as map $\phi:\R^{d_{\mathrm{in}}}\to\R^{d_{\mathrm{out}}}$, obtained as the composition of $L\gg 1$ applications $\phi=\phi_L\circ\ldots \circ \phi_1$, where, for every $n=1,\ldots,L$, the function $\phi_n:\R^{d_n}\to\R^{d_{n+1}}$ (also referred as \textit{the $n$-th layer} of the network) depends on a \textit{trainable} parameter $\theta_n\in \R^{m_n}$. The crucial process of choosing the values of the parameters $\theta_1,\ldots,\theta_L$ is known as the \textit{training of the network}. For a complete survey on the topic, we recommend the textbook \cite{Goodfellow-et-al-2016}. \\
Recent advancements have explored the link between dynamical systems, optimal control, and deep learning, proposing a compelling perspective. In the groundbreaking work \cite{he2016deep}, it was highlighted how the problem of training very deep networks can be alleviated by the introduction of a new type layer called “Residual Block”. This consists in using the identity map as skip connection, and after-addition activations. 
In other words, every layer has the following form:
\begin{equation}\label{eq:resnet}
    X_{n+1} = \phi_n(X_n) = X_n + \MF(X_n, \theta_n),
\end{equation}
where $X_{n+1}$ and $X_n$ are, respectively, the output and the input of the $n$-th layer.
This kind of architecture is called \textit{Residual Neural Network} (or ResNet). It is important to observe that, in order to give sense to the sum in \eqref{eq:resnet}, in each layer the dimension of the input should coincide with the dimension of the output. 
In the practice of Deep Learning, this novel kind of layer has turned out to be highly beneficial, since it is effective in avoiding the ``vanishing of the gradients" during the training \cite{bengio1994learning}, or the saturation of the network's accuracy \cite{he2015convolutional}. Indeed, before \cite{he2016deep}, these two phenomena had limited for long time the large-scale application of deep architectures.\\
Despite the original arguments in support of residual blocks being based on empirical considerations, their introduction revealed nevertheless a more mathematical and rigorous bridge between residual deep networks and controlled dynamical systems. Indeed, what makes Residual Neural Networks particularly intriguing is that they can be viewed as discretized versions of continuous-time dynamical systems. This dynamical approach was proposed independently in \cite{weinan2017proposal} and  \cite{haber2017stable}, and it was greatly popularized in the machine learning community under the name of NeurODEs by \cite{chen2018neural}. 
This connection with dynamical systems relies on reinterpreting the iteration \eqref{eq:resnet} as a step of the forward-Euler approximation of the following dynamical system:
\begin{equation}\label{eq:neurode}
    \dot{X}(t) = \MF(X(t), \theta(t)),
\end{equation}
where $t\mapsto \theta(t)$ is the map that, instant by instant, specifies the value of the parameter $\theta$. 
Moreover, the training of these neural networks, typically formulated as empirical risk minimization, can be reinterpreted as an optimal control problem. Given a labelled dataset $\{(X^i
_0, Y^i_0)\}^N_{i=1}$ of size $N \geq1$, the depth of the time-continuous neural network \eqref{eq:neurode} is denoted by $T > 0$. 
Then, training this network amounts to learning the control signals $\theta \in L^2([0,T],\R^m)$ in such a way that the terminal output $X^i_T$ of \eqref{eq:neurode} is close to it corresponding label $Y^i_0$ for all $i=1, \ldots,N$, with respect to some distortion measure $\ell(\cdot,\cdot) \in C^1$. A typical choice is $\ell(x, y) := |x - y|^2$, which is often referred as the \textit{squared loss function} in the machine learning literature. Therefore, it is possible to formulate the following optimal control problem
\begin{equation*}
\inf_{\theta \in L^2([0,T];\R^m)} J^N(\theta) := 
\left\{
\begin{aligned}
& \frac{1}{N} \sum_{i=1}^N \ell \big(X^i(T),Y^i(T) \big) + \lambda \int_0^T|\theta(t)|^2\, dt, \\
& \,\; \textnormal{s.t.} \,\,
\left\{
\begin{aligned}
& \dot X^i(t) = \MF(t,X^i(t),\theta(t) ), \hspace{1.45cm} \dot Y^i(t) =0, \\ 
& (X^i(t),Y^i(t))\big |_{t=0} = (X_0^i,Y_0^i), ~~ i \in \{1,\dots,N\},
\end{aligned}
\right.
\end{aligned}
\right.
\end{equation*}
where, differently from \eqref{eq:neurode}, we admit here the explicit dependence of the dynamics on the time variable.
Notice that the objective function also comprises of Tikhonov regularization, tuned by the parameter $\lambda$, which plays a crucial role in the analysis of this control problem. 
The benefit of interpreting the training process in this manner results in the possibility of exploiting established results from the branch of mathematical control theory, to better understand this process. 
A key component of optimal control theory is a set of necessary conditions, known as Pontryagin Maximum Principle (PMP), that must be satisfied by any (local) minimizer $\theta$. These conditions were introduced in \cite{pontryagin1987} and have served as inspiration for the development of innovative algorithms \cite{li2017maximum} and network structures \cite{chang2018reversible} within the machine learning community.\\
This work specifically addresses a variant of the optimal control problem presented above, in which the focus is on the case of an infinitely large dataset. 
This formulation gives rise to what is commonly known as a \textit{mean-field optimal control problem}, where the term ``mean-field'' emphasizes the description of a multiparticle system through its averaged effect. In this context, the focus is on capturing the collective behavior of the system rather than individual particle-level dynamics, by considering the population as a whole. As a consequence, the parameter $\theta$ is shared by the entire population of input-target pairs, and the optimal control is required to depend on the initial distribution $\mu_0(x,y)\in \mathcal{P}(\R^d\times \R^d)$ of the input-target pairs. Therefore, the optimal control problem needs to be defined over spaces of probability measures, and it is formulated as follows:
\begin{equation*}
 \inf_{\theta \in L^2([0,T];\R^m)} J(\theta) := 
\left\{
\begin{aligned}
& \int_{\RR^{2d}}\ell(x,y) \, d\mu_T(x,y)+\lambda\int_0^T|\theta(t)|^2 \, dt ,  \\
& \hspace{0.2cm} \text{s.t.} ~ \left\{
\begin{aligned}
& \partial_t\mu_t(x,y)+\nabla_x\cdot (\mathcal F(t,x,\theta_t)\mu_t(x,y))=0 & t\in[0,T],\\
& \mu_t|_{t=0}(x,y)=\mu_0(x,y),
\end{aligned}
\right.
\end{aligned}
\right.
\end{equation*}
This area of study has gained attention in recent years, and researchers have derived the corresponding Pontryagin Maximum Principle in various works, such as \cite{weinan2018mean} and \cite{bonnet2023measure}. It is worth mentioning that there are other types of mean-field analyses of neural networks, such as the well-known work \cite{mei2019mean}, which focus on mean-field at the parameter level, where the number of parameters is assumed to be infinitely large. However, our approach in this work takes a different viewpoint, specifically focusing on the control perspective in the case of an infinitely large dataset. \\
One of the contributions of this paper is providing a more accessible derivation of the necessary conditions for optimality, such as the well-known Pontryagin Maximum Principle. Namely, we characterize the stationary points of the cost functional, and we are able to recover the PMP that was deduced in \cite{bonnet2023measure} under the assumption of large values of the regularization parameter $\lambda$, and whose proof relied on an infinite-dimensional version of the Lagrange multiplier rule.
This alternative perspective offers a clearer and more intuitive understanding of the PMP, making it easier to grasp and apply it in practical scenarios.\\
In addition, we aim at generalizing the applicability of the results presented in \cite{bonnet2023measure} by considering a possibly non-convex regime, corresponding to small values of the parameter $\lambda>0$. As mentioned earlier, the regularization coefficient $\lambda$ plays a crucial role in determining the nature of the cost function. Indeed, when $\lambda$ is sufficiently large, the cost function is convex on the sub-level sets, and it is possible to prove the existence and uniqueness of the solution of the optimal control problem that arises from training NeurODEs. 
Additionally, in this highly-regularized scenario, desirable properties of the solution, such as its continuous dependence on the initial data and a bound on the generalization capabilities of the networks, have been derived in \cite{bonnet2023measure}.\\
However, in practical applications, a large regularization parameter may cause a poor performance of the trained NeurODE on the task. In other words, in the highly-regularized case, the cost functional is unbalanced towards the $L^2$-penalization, at the expenses of the term that promotes that each datum $X^i_0$ is driven as close as possible to the corresponding target $Y^i_0$. 
This motivated us to investigate the case of low Tikhonov regularization. While we cannot globally recover the same results as in the highly-regularized regime, we find interesting results concerning local minimizers. 
Moreover, we also show that the (mean field) optimal control problem related to the training of the NeurODE induces a gradient flow in the space of admissible controls. 
The perspective of the gradient flow leads us to consider the well-known minimizing movement scheme, and to introduce a proximal stabilization term to the cost function in numerical experiments. 
This approach effectively addresses the well-known instability issues (see \cite{chernousko1982method}) that arise when solving numerically optimal control problems (or when training NeurODEs) with itearive methods based on the PMP.
It is important to note that our stabilization technique differs from previous methods, such as the one introduced in \cite{li2017maximum}.\\

\noindent
\paragraph{From NeurODEs to AutoencODEs.}
Despite their huge success, it should be noted that NeurODEs (as well as ResNets, their discrete-time counterparts) in their original form face a limitation in capturing one of the key aspects of modern machine learning architectures, namely the discrepancy in dimensionality between consecutive layers. 
As observed above, the use of skip connections with identity mappings requires a ``rectangular'' shape of the network, where the width of the layers are all identical and constant with respect to the input's dimension. This restriction poses a challenge when dealing with architectures that involve layers with varying dimensions, which are common in many state-of-the-art models. 
Indeed, the inclusion of layers with different widths can enhance the network's capacity to represent complex functions and to capture intricate patterns within the data.
In this framework, Autoencoders have emerged as a fundamental class of models specifically designed to learn efficient representations of input data by capturing meaningful features through an encoder-decoder framework. More precisely, the encoder compresses the input data into a lower-dimensional latent space, while the decoder reconstructs the original input from the compressed representation. 
The concept of Autoencoders was first introduced in the 1980s in \cite{rumelhart1985learning}, and since then, it has been studied extensively in various works, such as \cite{hinton2006fast}, among many others. 
Nowadays, Autoencoders have found numerous applications, including data compression, dimensionality reduction, anomaly detection, and generative modeling. Their ability to extract salient features and capture underlying patterns in an unsupervised manner makes them valuable tools in scenarios where labeled training data is limited or unavailable. Despite their huge success in practice, there is currently a lack of established theory regarding the performance guarantees of these models.\\
{Prior works, such as \cite{esteve2020large}, have extended the control-theoretic analysis of NeurODEs to more general width-varying neural networks. Their model is based on an integro-differential equation that was first suggested in \cite{liu2020selection} in order to study the continuum limit of neural networks with respect to width and depth. In such an equation the state variable has a dependency on both time and space since the changing dimension over time is viewed as an additional spatial variable. 
In \cite[Section~6]{esteve2020large} the continuous space-time analog of residual neural networks proposed in \cite{liu2020selection} has been considered and discretized in order to model variable width ResNets of various types, including convolutional neural networks. 
The authors assume a simple time-dependent grid, and use forward difference discretization for the time derivative and Newton-Cotes for discretizing the integral term, but refer to more sophisticated moving grids in order to possibly propose new types of architectures. In this setting, they are also able to derive some stability estimates and generalization properties in the overparametrized regime, making use of turnpike theory in optimal control \cite{geshkovski2022turnpike}. 
In principle, there could be several different ways to model width-varying neural networks with dynamical systems, e.g., forcing some structure on the control variables, or formulating a viability problem. In this last case, a possibility could be to require admissible trajectories to visit some lower-dimensional subsets during the evolution. For an introduction to viability theory, we recommend the monograph \cite{aubin2010}, while we refer to \cite{bonnet2022viability, bonnet2023viability} for recent results on viability theory for differential inclusions in Wasserstein spaces.\\
In contrast, our work proposes a simpler extension of the control-theoretical analysis. It is based on a novel design of the vector field that drives the dynamics, allowing us to develop a continuous-time model capable of accommodating various types of width-varying neural networks. This approach has the advantage of leveraging insights and results obtained from our previous work \cite{bonnet2023measure}. Moreover, the simplicity of our model facilitates the implementation of residual networks with variable width and allows us to test their performance in machine learning tasks.}
In order to capture width-varying neural networks, we need to extend the previous control-theoretical framework to a more general scenario, in particular we need to relax some of the assumptions of \cite{bonnet2023measure}. This is done in Subsection~\ref{subsec:Neur_to_Auto}, where we introduce a discontinuous-in-time dynamics that can describe a wider range of neural network architectures. 
By doing so, we enable the study of Autoencoders (and, potentially, of other width-varying architectures) from a control-theoretic point of view, with the perspective of getting valuable insights into their behavior.\\
Furthermore, we also generalize the types of activation functions that can be employed in the network. The previous work \cite{bonnet2023measure} primarily focused on sigmoid functions, which do not cover the full range of activations commonly employed in practice. 
Our objective is to allow for unbounded activation functions, which are often necessary for effectively solving certain tasks. By considering a broader set of activation functions, we aim at enhancing the versatility and applicability of our model.\\
Furthermore, in contrast to \cite{bonnet2023measure}, we introduce a stabilization method to allow the numerical resolution of the optimal control problem in the low-regularized regime, as previously discussed. This stabilization technique provides the means to test the architecture with our training approach on various tasks: from low-dimensional experiments, which serve to demonstrate the effectiveness of our method, to more sophisticated and high-dimensional tasks such as image reconstruction. In Section \ref{sec:num_exp}, we present all the experiments and highlight noteworthy behaviors that we observe. An in-depth exploration of the underlying reasons for these behaviors is postponed to future works.\\

\noindent
The structure of the paper is the following: Section~\ref{sec:neurODEs} discusses the dynamical model of NeurODEs and extends it to the case of width-varying neural networks, including Autoencoders, which we refer to as AutoencODEs. 
In Section~\ref{sec:mean-field}, we present our mean-field analysis, focusing on the scenario of an infinitely large dataset. We formulate the mean-field optimal control problem, we derive a set of necessary optimality conditions, and we provide a convergence result for the finite-particles approximation. At the end of this section, we compare our findings with the ones previously obtained in \cite{bonnet2023measure}. 
Section~\ref{sec:Alg} covers the implementation and the description of the training procedure, and we compare it with other  methods for NeurODEs existing in the literature.
Finally, in Section~\ref{sec:num_exp}, we present the results of our numerical experiments, highlighting interesting properties of the AutoencODEs that we observe.


\subsection*{Measure-theoretic preliminaries}
Given a metric space $(X,d_X)$, we denote by $\mathcal{M}(X)$ the space of signed Borel measures in $X$ with finite total variation, and by $\mathcal{P}(X)$ the space of probability measures, while $\mathcal{P}_c(X) \subset \mathcal{P}(X)$ represents the set of probability measures with compact support. Furthermore, $\mathcal{P}_c^N(X) \subset \mathcal{P}_c(X)$ denotes the subset of empirical or atomic probability measures.
Given $\mu \in \mathcal{P}(X)$ and $f: X \to Y$ , with $f$ $\mu-$measurable, we denote with $f_{\#}\mu \in \mathcal{P}(Y)$ the push-forward measure defined by $f_{\#}\mu(B) = \mu(f^{-1}(B))$ for any Borel set $B\subset Y$. Moreover, we recall the change-of-variables formula
\begin{equation}\label{eq:push_forw}
    \int_Y g(y)\, d\big(f_{\#}\mu\big )(y) = \int_X g \circ f(x) \,d\mu(x)
\end{equation}
whenever either one of the integrals makes sense.\\
We now focus on the case $X=\R^d$ and  briefly recall the definition of the Wasserstein metrics of optimal transport in the following definition, and refer to \cite[Chapter 7]{ambrosio2005gradient} for more details.
\begin{definition}
Let $1\leq p < \infty$ and $\mathcal{P}_p(\RR^{d})$ be the space of Borel probability measures on $\RR^{d}$ with finite $p$-moment. In the sequel, we endow the latter with the $p$-\emph{Wasserstein metric}
\begin{equation*}\label{eq:wassdis}
W_p^{p}(\mu, \nu):=\inf\left\{\int_{\RR^{2d}} |z-\hat{z}|^{p}\ d\pi(z,\hat{z})\ \big| \ \pi \in \Pi(\mu, \nu)\right\} ,
\end{equation*}
where $\Pi(\mu, \nu)$ denotes the set of \emph{transport plan} between $\mu$ and $\nu$, that is the collection of all Borel probability measures on $\R^d\times \R^d$ with marginals $\mu$ and $\nu$ in the first and second component respectively.
\end{definition} 

It is a well-known result in optimal transport theory that when $p =1$ and $\mu,\nu \in \mathcal{P}_c(\R^d)$, then the following alternative representation holds for the Wasserstein distance
\begin{equation}\label{eq:Kanto}
W_1(\mu,\nu)=\sup \left\{\int_{\RR^d}\varphi(x) \,d \big(\mu-\nu\big)(x) \, \big| \, \varphi\in \mbox{Lip}(\R^d),~ \mbox{Lip}(\varphi)\leq 1\right\}\,,
\end{equation}
by Kantorovich's duality \cite[Chapter 6]{ambrosio2005gradient}. Here, $\mbox{Lip}(\R^d)$ stands for the space of real-valued Lipschitz continuous functions on $\R^d$, and $\mbox{Lip}(\varphi)$ is the Lipschitz constant of a mapping $\varphi$ defined ad 
\begin{equation*}
    Lip(\varphi) := \sup_{x,y \in \R^d , x \neq y} \frac{\|\varphi(x)-\varphi(y)\|}{\|x-y\|}
\end{equation*}


\section{Dynamical Model of NeurODEs}\label{sec:neurODEs}
\subsection{Notation and basic facts}
In this paper, we consider controlled dynamical systems in $\R^d$, where the velocity field is prescribed by a function $\MF: [0,T] \times \R^d \times \R^m \to \R^d$ that satisfies these basic assumptions.
\begin{assum}\label{ass:block1}
The vector field $\MF: [0,T] \times \R^d \times \R^m \to \R^d$ satisfies the following:
\begin{enumerate}
\item[$(i)$] For every $x \in \R^d$ and every $\theta \in \R^m$, the map $t \mapsto \MF(t,x,\theta)$ is measurable in $t$.
\item[$(ii)$] For every $R>0$ there exists a constant  $L_{R} >0$ such that, for every $\theta \in \R^m$, it holds
\begin{equation*}
|\mathcal F(t,x_1,\theta)-\mathcal F(t,x_2,\theta)|\leq L_{R}(1+|\theta|) |x_1-x_2| ,\quad \mbox{ for a.e. } t\in [0,T] \mbox{ and every }x_1, x_2 \in B_R(0),  
\end{equation*}
from which it follows that $|\MF(t,x,\theta)| \leq L_R(1 + |x|)(1+ |\theta|)$ for a.e. $t\in [0,T]$.
\item[$(iii)$] For every $R>0$ there exists a constant  $L_{R} >0$ such that, for every $\theta_1, \theta_2 \in \R^m$, it holds
\begin{equation*}
|\MF(t,x,\theta_1)-\MF(t,x,\theta_2)| \leq L_R(1 + |\theta_1| + |\theta_2|)|\theta_1-\theta_2| ,\quad \mbox{ for a.e. } t\in [0,T] \mbox{ and every }x \in B_R(0). 
\end{equation*}
\end{enumerate}
\end{assum}
\noindent
The control system that we are going to study is
\begin{equation}\label{eq:ode}
    \begin{cases}
    \dot{x}(t) = \MF(t,x(t), \theta(t)),  & \mbox{a.e. in } [0,T],\\
    x(0) = x_0,
    \end{cases}
\end{equation}
where $\theta \in L^2([0,T], \R^m)$ is the control that drives the dynamics.
Owing to Assumption \ref{ass:block1}, the classical Carathéodory Theorem (see \cite[Theorem 5.3]{hale1969ordinary})
 guarantees that, for every $\theta \in L^2([0,T], \R^m)$ and for every $x_0 \in \R^d$, the Cauchy problem \eqref{eq:ode} has a unique solution $x : [0,T] \to \R^d$. Hence, for every $(t, \theta) \in [0,T] \times L^2([0,T], \R^m)$, we introduce the flow map $\Phi^\theta_{(0,t)}: \R^d \to \R^d$ defined as 
\begin{equation}\label{eq:flow}
    \Phi^\theta_{(0,t)}(x_0) := x(t), 
\end{equation}
where $t \mapsto x(t)$ is the absolutely continuous curve that solves \eqref{eq:ode}, with Cauchy datum $x(0)=x_0$ and corresponding to the admissible control $t \mapsto \theta(t)$.
Similarly, given $0\leq s< t\leq T$, we write $\Phi^\theta_{(s,t)}: \R^d \to \R^d$ to denote the flow map obtained by prescribing the Cauchy datum at the more general instant $s\geq 0$.
We now present the properties of the flow map defined in \eqref{eq:flow} that describes the evolution of the system: we show that is well-posed, and we report  some classical properties.
\begin{proposition}\label{prop:flow_basics}
For every $t \in [0,T]$ and for every $\theta \in L^2([0,T], \R^m)$, let $\MF$ satisfy Assumption \ref{ass:block1}. Then, the flow $\Phi^\theta_{(0,t)}: \R^d \to \R^d$ is well-defined for any $x_0 \in \R^d$ and it satisfies the following properties.
\begin{itemize}
    \item For every $R>0$ and $\rho>0$, there exists a constant $\bar R>0$ such that
    \begin{equation*}
        |\Phi^\theta_{(0,t)}(x)| \leq \bar{R}
    \end{equation*}
    for every $x \in B_R(0)$ and every $\theta \in L^2([0,T],\R^m)$ such that $||\theta ||_{L^2} \leq \rho$.
    \item For every $R>0$ and $\rho>0$, there exists a constant $\bar{L}>0$ such that, for every $t \in [0,T]$, it holds
    \begin{equation*}
        |\Phi^\theta_{(0,t)}(x_1)- \Phi^\theta_{(0,t)}(x_2)| \leq \bar{L}|x_1-x_2|
    \end{equation*}
    for every $x_1, x_2 \in B_R(0)$ and every $\theta \in L^2([0,T],\R^m)$ such that $||\theta ||_{L^2} \leq \rho$. 
    \item For every $R>0$ and $\rho>0$, there exists a constant $\bar{L}>0$ such that, for every $t_1, t_2\in [0,T]$, it holds 
    \begin{equation*}
        |\Phi^\theta_{(0,t_2)}(x)- \Phi^\theta_{(0,t_1)}(x)| \leq \bar{L}|t_2-t_1|^\frac{1}{2}
    \end{equation*}
    for every $x \in B_R(0)$ and every $\theta \in L^2([0,T],\R^m)$ such that $||\theta ||_{L^2} \leq \rho$.
    \item For every $R>0$ and $\rho>0$, there exists a constant $\bar{L}>0$ such that, for every $t \in [0,T]$, it holds
    \begin{equation*}
        |\Phi^{\theta_1}_{(0,t)}(x)- \Phi^{\theta_2}_{(0,t)}(x)|_2 \leq \bar{L}\|\theta_1-\theta_2\|_{L^2}
    \end{equation*}
    for every $x \in B_R(0)$ and every $\theta_1, \theta_2 \in L^2([0,T],\R^m)$ such that $\|\theta_1 \|_{L^2}, \|\theta_2 \|_{L^2} \leq \rho$.
\end{itemize}
\end{proposition}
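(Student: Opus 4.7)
The proposition collects four classical well-posedness estimates for the flow of a Carath\'eodory ODE under the growth/Lipschitz hypotheses of Assumption \ref{ass:block1}. All four proofs follow the Gr\"onwall template, and the only non-routine bookkeeping comes from the fact that $\theta$ is only $L^2$ in time, while the estimates in Assumption \ref{ass:block1} give bounds linear in $(1+|\theta|)$. I would handle this by systematically applying Cauchy--Schwarz to pass from $L^2$ to $L^1$ norms of $\theta$, which is the recurring trick in all four items.

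\textbf{Step 1 (boundedness).} For a fixed initial datum $x_0 \in B_R(0)$ and control $\theta$ with $\|\theta\|_{L^2}\le\rho$, I would work on the maximal interval where the solution stays in a ball $B_{R^\ast}(0)$ and use Assumption \ref{ass:block1}(ii) to get
\[
|x(t)| \le |x_0| + \int_0^t L_{R^\ast}(1+|x(s)|)(1+|\theta(s)|)\,ds.
\]
Gr\"onwall's inequality then yields $|x(t)|\le (1+|x_0|)\exp\!\bigl(L_{R^\ast}\!\int_0^T(1+|\theta(s)|)\,ds\bigr)-1$, and Cauchy--Schwarz gives $\int_0^T(1+|\theta|)\,ds \le T+\sqrt{T}\,\rho$. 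This produces an a priori bound $\bar R$ depending only on $R,\rho,T,L_{R^\ast}$, and a standard continuation argument (choose $R^\ast:=\bar R+1$) shows the solution exists on $[0,T]$ and stays in $B_{\bar R}(0)$.

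\textbf{Step 2 (Lipschitz in the initial datum).} Once Step~1 fixes a ball $B_{\bar R}(0)$ containing both trajectories, I subtract the two integral equations and apply Assumption \ref{ass:block1}(ii) to obtain
\[
|x_1(t)-x_2(t)| \le |x_1(0)-x_2(0)| + \int_0^t L_{\bar R}(1+|\theta(s)|)\,|x_1(s)-x_2(s)|\,ds,
\]
and Gr\"onwall plus Cauchy--Schwarz closes the estimate with $\bar L := \exp\!\bigl(L_{\bar R}(T+\sqrt{T}\,\rho)\bigr)$.

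\textbf{Step 3 (H\"older in time).} For $t_1<t_2$, writing the flow as an integral and using the growth bound from Assumption \ref{ass:block1}(ii) on the pre-bounded trajectory yields
\[
|\Phi^\theta_{(0,t_2)}(x)-\Phi^\theta_{(0,t_1)}(x)| \le L_{\bar R}(1+\bar R)\int_{t_1}^{t_2}(1+|\theta(s)|)\,ds
\le L_{\bar R}(1+\bar R)\bigl(|t_2-t_1|+|t_2-t_1|^{1/2}\rho\bigr),
\]
after one more application of Cauchy--Schwarz. Since $|t_2-t_1|\le T^{1/2}|t_2-t_1|^{1/2}$ on $[0,T]$, the two terms collapse into a single $|t_2-t_1|^{1/2}$ bound, which is the main reason the exponent is $1/2$ rather than $1$.

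\textbf{Step 4 (Lipschitz in the control).} For the two trajectories $x_i(t)=\Phi^{\theta_i}_{(0,t)}(x)$ with $\|\theta_i\|_{L^2}\le\rho$, Step~1 again confines both in a common ball $B_{\bar R}(0)$. I would insert and subtract $\MF(s,x_2(s),\theta_1(s))$ and use parts (ii) and (iii) of Assumption \ref{ass:block1} separately to split
\[
|x_1(t)-x_2(t)| \le \int_0^t L_{\bar R}(1+|\theta_1(s)|)|x_1(s)-x_2(s)|\,ds + \int_0^t L_{\bar R}(1+|\theta_1(s)|+|\theta_2(s)|)|\theta_1(s)-\theta_2(s)|\,ds.
\]
Applying Cauchy--Schwarz to the second integral bounds it by $L_{\bar R}(\sqrt{T}+2\rho)\|\theta_1-\theta_2\|_{L^2}$, and Gr\"onwall on the first integral closes the estimate. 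The main obstacle throughout is exactly this bookkeeping: being careful that the constant $\bar L$ one extracts depends only on $R,\rho,T$ and the local Lipschitz constant $L_{\bar R}$, and in particular not on the controls themselves beyond their norm bound.
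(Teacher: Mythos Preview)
Your proposal is correct and follows essentially the same route as the paper, which proves the four items as separate appendix lemmata (boundedness, Lipschitz in the initial datum, H\"older in time, Lipschitz in the control) via exactly the Gr\"onwall-plus-Cauchy--Schwarz template you describe, including the same intermediate-term splitting in Step~4. If anything, your Step~1 is slightly more careful than the paper's: you explicitly address the continuation argument needed to close the circularity between the a~priori bound and the local constant $L_{R^\ast}$, which the paper's Lemma~\ref{lem:bound_traj} leaves implicit.
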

\begin{proof}
The proof is postponed to the Appendix (see Lemmata \ref{lem:bound_traj}, \ref{lem:lip_flow_init}, \ref{lem:lip_flow_time}, \ref{lem:lip_flow_ctrl}).
\end{proof}

Even though the framework introduced in Assumption \ref{ass:block1} is rather general, in this paper we specifically have in mind the case where the mapping $\MF: [0, T] \times \mathbb{R}^d \times \mathbb{R}^m \rightarrow \mathbb{R}^d$ represents the feed-forward dynamics associated to residual neural networks. 
In this scenario, the parameter $\theta \in \mathbb{R}^m$ encodes the \textit{weights} and \textit{shifts} of the network, i.e., $\theta = (W, b)$, where $W \in \mathbb{R}^{d \times d}$ and $b \in \mathbb{R}^d$. Moreover, the mapping $\MF$ has the form:
\begin{equation*}
    \MF(t, x, \theta) = \sigma(W x + b),
\end{equation*}
where $\sigma: \mathbb{R}^d \rightarrow \mathbb{R}^d$ is a nonlinear function acting component-wise, often called in literature  \textit{activation function}. In this work, we consider sigmoidal-type activation functions, such as the hyperbolic tangent function:
\begin{equation*}
    \sigma(x) = \tanh(x),
\end{equation*}
as well as smooth approximations of the Rectified Linear Unit (ReLU) function,  which is defined as:
\begin{equation}\label{eq:relu_def}
    \sigma(x) = \max\{0, x\}.
\end{equation}
We emphasize the need to consider smoothed versions of the ReLU function due to additional differentiability requirements on $\MF$, which will be further clarified in Assumption \ref{ass:block2}. Another useful activation function covered by Assumption \ref{ass:block2} is the Leaky Rectified Linear Unit (Leaky ReLU) function:
\begin{equation}\label{eq:leaky_relu_def}
    \sigma(x) = \max\{0,x\} - \max\{ -\alpha x,0 \} 
\end{equation}
where $\alpha \in [0, 1]$ is a predetermined parameter that allows the output of the function to have negative values. The smooth approximations of \eqref{eq:relu_def} and \eqref{eq:leaky_relu_def} that we consider will be presented in Section \ref{sec:Alg}.

\subsection{From NeurODEs to AutoencODEs} \label{subsec:Neur_to_Auto}
As explained in the Introduction, NeurODEs and ResNets --their discrete-time counterparts-- face the limitation of a ``rectangular'' shape of the network because of formulas \eqref{eq:neurode} and \eqref{eq:resnet}, respectively. 
To overcome this fact, we aim at designing a continuous-time model capable of describing width-varying neural networks, with a particular focus on Autoencoders, as they represent the prototype of neural networks whose layers operate between spaces of different dimensions. 
Indeed, Autoencoders consist of an \textit{encoding phase}, where the layers' dimensions progressively decrease until reaching the ``latent dimension'' of the network. Subsequently, in the \textit{decoding phase}, the layers' widths are increased until the same dimensionality as the input data is restored. For this reason, Autoencoders are prominent examples of width-varying neural networks, since the changes in layers' dimensions lie at the core of their functioning.
Sketches of encoders and Autoencoders are presented in Figure \ref{fig:encoder_autoencoder}. 
Finally, we insist on the fact that our model can encompass as well other types of architectures. In this regard, in Remark \ref{rmk:u-net} we discuss how our approach can be extended to U-nets.
\begin{figure}[ht!]
\begin{center}
    \includegraphics[scale=0.3]{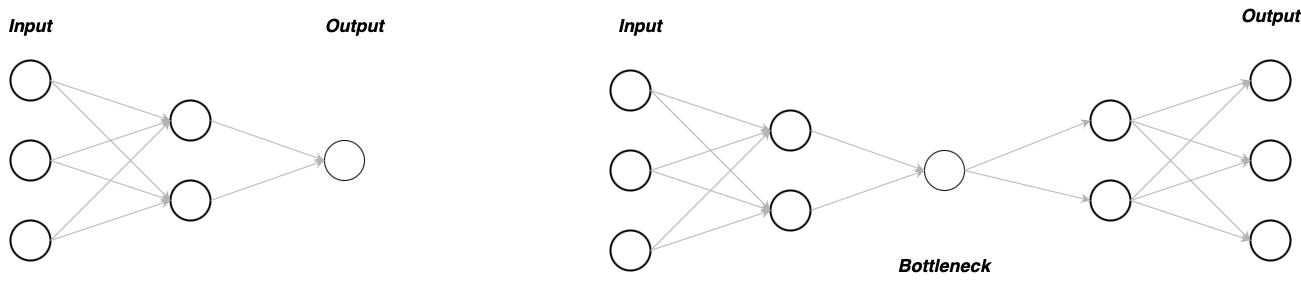}
\end{center}
\caption{Left: network with an encoder structure. Right: Autoencoder.}
\label{fig:encoder_autoencoder}
\end{figure}

\paragraph{Encoder} Our goal is to first model the case of a network which sequentially reduces the dimensionality of the layers' outputs. 
For this purpose, we artificially force some of the components not to evolve anymore, while we let the others be active part of the dynamics. More precisely, given an input variable $x_0 \in \RR^{d}$, we denote with $(\I_j)_{j=0,\ldots,r}$ an increasing filtration, where each element $\I_j$ contains the sets of indices whose corresponding components are \textit{inactive}, i.e., they are constant and do not contribute to the dynamics. Clearly, since the layers' width will decrease sequentially, the filtration of inactive components $\I_j$ will increase, i.e.
\begin{equation*}
    \emptyset =: \I_0 \subsetneq \I_1 \subsetneq ... \subsetneq \I_r \subsetneq \{1,\ldots,d\}, \quad r < d, \quad j=0,\ldots, r.
\end{equation*}
On the other hand, the sets of indices of \textit{active} components define an decreasing filtration $\A_j := \{1, \ldots,d\}\setminus \I_j$ for $j=0,\ldots,r$. As opposed to before, the sets of active components $(\A_j)_{j=0,\ldots,r}$ satisfy
\begin{equation*}
    \{1,\ldots, d\} =: \A_0 \supsetneq \A_1 \supsetneq ... \supsetneq \A_r \supsetneq \emptyset, \quad r < d, \quad j=0,\ldots, r.
\end{equation*}
We observe that, for every $j=0,\ldots,r$,  the sets $\A_j$ and $\I_j$ provide a partition of $\{1,\ldots,d\}$. A visual representation of this model for encoders is presented on the left side of Figure \ref{fig:our_encoder_autoencoder}.\\
Now, in the time interval $[0,T]$, let us consider $r+1$ nodes $0 = t_0 < t_1 <... < t_r<t_{r+1}=T$. For $j=0,\ldots,r$, we denote with $[t_j,t_{j+1}]$ the sub-interval and, for every $x \in \R^d$, we use the notation $x_{\I_j}:=(x_i)_{i\in \I_j}$ and $x_{\A_j}:=(x_i)_{i\in \A_j}$ to access the components of $x$ belonging to $\I_j$ and $\A_j$, respectively. Hence, the controlled dynamics for any $t \in [t_j, t_{j+1}]$ can be described by 
\begin{equation}\label{eq:encoder_ode}
    \begin{cases}
    \dot{x}_{\I_j}(t) = 0,\\
    \dot{x}_{\A_j}(t) = \MG_j(t, x_{\A_j}(t), \theta(t)), 
    \end{cases}
\end{equation}
where $\MG_j: \left [ t_j, t_{j+1}\right ] \times \RR^{| \A_j|} \times \RR^m \to \RR^{|\A_j|}  $, for $j= 0,\ldots,r$, and $x(0) = x_{\A_0}(0) = x_0$.
Furthermore, the dynamical system describing the encoding part is 
\begin{equation*}
    \begin{cases}
      \dot{x}(t) = \MF(t,x(t), \theta(t)), &\mbox{a.e } t \in [0,T],\\
      x(0) = x_0
    \end{cases}
\end{equation*}
where, for $t \in [t_j, t_{j+1}]$, we define the discontinuous vector field as follows
\begin{equation*}
    \big(\MF(t,x,\theta)\big)_k = \begin{cases}
      \left (\MG(t, x_{\A_j},\theta) \right)_k, & \mbox{if }k \in \A_j,\\
      0, & \mbox{if }k \in \I_j.
    \end{cases}
\end{equation*}
\begin{rmk}\label{rmk:dim_theta}
Notice that $\theta(t) \in \R^m$ for every $t \in [0,T]$, 
according to the model that we have just described. However, it is natural to expect that, since $x$ has varying active components, in a similar way the controlled dynamics $\MF(t,x,\theta)$ shall not explicitly depend at every $t \in [0,T]$ on every component of $\theta$. 
\end{rmk}

\begin{figure}[ht]
\begin{center}
    \includegraphics[scale = 0.3]{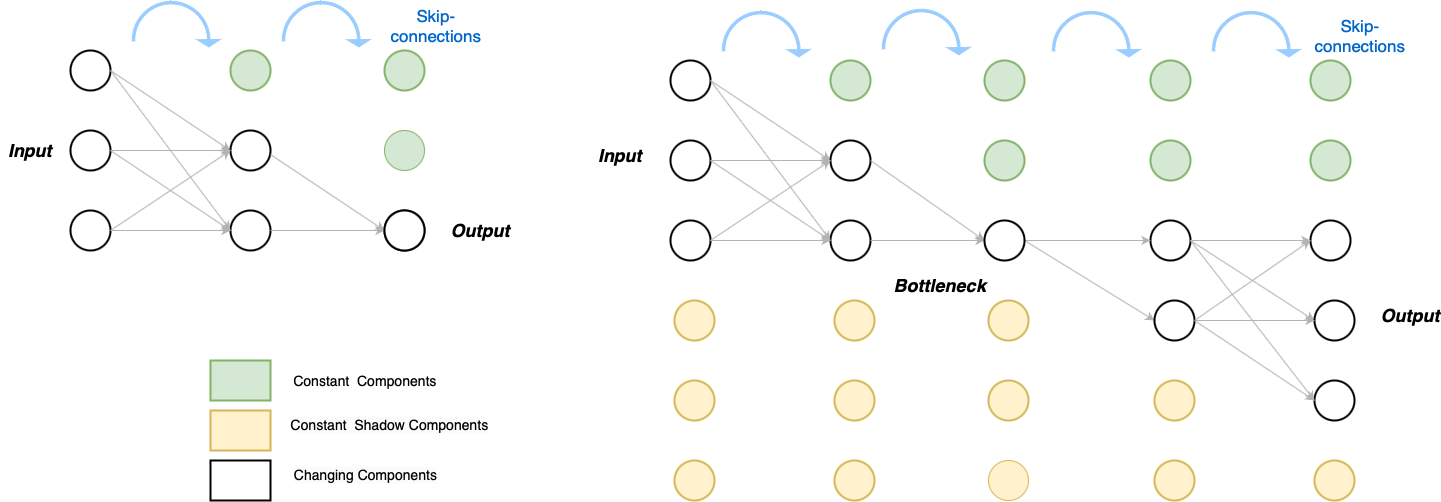}
\end{center}
\caption{Left: Embedding of an encoder into a dynamical system. Right: model for an Autoencoder.}
\label{fig:our_encoder_autoencoder}
\end{figure}

\paragraph{Autoencoder} We now extend the previous model to the case of networks which not only decrease the dimensionality of the layers, but they are also able to increase the layers' width in order to restore the original dimension of the input data. 
Here we denote by $z_0 \in \RR^{\tilde d}$ the input variable, and we fictitiously augment the input's dimension, so that we consider the initial datum $x_0 = (z_0, \underbar{0})\in \RR^{d} =~ \RR^{\tilde d} \times \RR^{\tilde d}$, where $\underbar{0} \in \RR^{\tilde d}$. We make use of the following notation for every $x \in \R^d$:
\begin{equation*}
    x = \big ( (z_i)_{i=1,\ldots,\tilde d}, (z^H_i)_{i= 1,\ldots,\tilde d}) \big )
\end{equation*}
where $z^H$ is the augmented (or \textit{shadow}) part of the vector $x$. 
In this model, the time horizon $[0,T]$ is splitted using the following time-nodes:
\begin{equation*}
    0=t_0 \leq t_1 \leq ... \leq t_r \leq ... \leq t_{2r} \leq t_{2r+1}:=T
\end{equation*}
where $t_r$, which was the end of the encoder in the previous model, is now the instant corresponding to the bottleneck of the autoencoder. 
Similarly as before, we introduce two families of partitions of $\{1,\ldots, \tilde d \}$ modeling the active and non-active components of, respectively, $z$ and $z^H$. The first filtrations are relative to the encoding phase and they involve the component of $z$:
\begin{alignat*}{2}
    & \begin{aligned} & \begin{cases}
    \I_{j-1} \subsetneq I_j & \text{if }  1 \leq j \leq r, \\
    \I_j = \I_{j-1} & \text{if }  j>r,
  \end{cases}
  \end{aligned}
    & \hskip 6em &
  \begin{aligned}
  & \begin{cases}
  \A_{j-1} \supsetneq \A_j & \text{if } 1 \leq j \leq r, \\
  \A_j = \A_{j-1} & \text{if } j>r.
  \end{cases} 
  \end{aligned}
\end{alignat*}
where $\I_0 := \emptyset$, $\I_r \subsetneq \{1,\ldots, \tilde d\}$ and $\A_0 =\{ 1, \ldots, \tilde d \}$, $\A_r \supsetneq \emptyset$. The second filtrations, that aim at modeling the decoder, act on the shadow part of $x$, i.e., they involve the components of $z^H$:
\begin{alignat*}{2}
    & \begin{aligned} & \begin{cases}
    \I^H_{j-1} = \{1,\ldots, d\} & \text{if }  1 \leq j \leq r, \\
    \I^H_j \subsetneq \I^H_{j-1} & \text{if }  r < j \leq 2r,
  \end{cases}
  \end{aligned}
    & \hskip 6em &
  \begin{aligned}
  & \begin{cases}
  \A^H_{j-1} = \emptyset & \text{if } 1 \leq j \leq r, \\
  \A^H_j \supsetneq \A^H_{j-1} & \text{if } r < j \leq 2r.
  \end{cases}
  \end{aligned}
\end{alignat*}
While the encoder structure acting on the input data $z_0$ is the same as before, in the decoding phase we aim at activating the components that have been previously turned off during the encoding. 
However, since the information contained in the original input $z_0$ should be first compressed and then decompressed, we should not make use of the values of the components that we have turned off in the encoding and hence, we cannot re-activate them. 
Therefore, in our model the dimension is restored by activating components of $z^H$, the shadow part of $x$, which we recall was initialized equal to $\underbar{0} \in \R^{\tilde d}$. 
This is the reason why we introduce sets of active and inactive components also for the shadow part of the state variable. A sketch of this type of model is presented on the right of Figure \ref{fig:our_encoder_autoencoder}.
Moreover, in order to be consistent with the classical structure of an autoencoder, the following identities must be satisfied:
\begin{enumerate}
\item $\A_j \cap \A_j^H = \emptyset$  for every $j=1,\ldots,2r$,
\item $\A_{2r} \cup \A^H_{2r} = \{1,\ldots, \tilde d\}$.
\end{enumerate} 
The first identity formalizes the constraint that the active component of $z$ and those of $z^H$ cannot overlap and must be distinct, while the second identity imposes that, at the end of the evolution, the active components in $z$ and $z^H$ should sum up exactly to ${1,\ldots,\tilde d}$.
Furthermore, from the first identity we derive that $\A_j \subseteq (\A_j^H)^C = \I_j^H$ and, similarly, $\A_j^H \subseteq \I_j$ for every $j=1,\ldots,2r$. Moreover, $\A_r$ satisfies the inclusion $\A_r \subseteq \A_j$ for every $j=1,\ldots,2r$, which is consistent with the fact that layer with the smallest width is located in the bottleneck, i.e., in the interval $[t_r,t_{r+1}]$. Finally, from the first and the second assumption, we obtain that $\A_{2r}^H = \I_{2r}$, i.e., the final active components of $z^H$ coincide with the inactive components of $z$, and, similarly, $\I_{2r}^H = \A_{2r}$.
Finally, to access the active components of $x = (z, z^H)$, we make use of the following notation: 
\[
x_{\A_j}= (z_k)_{k \in \A_j},  \quad x_{\A^H_j} = (z^H_k)_{k \in \A^H_j}\quad \text{and} \quad x_{\A_j, \A^H_j} = (z_{\A_j}, z^H_{\A^H_j}),
\]
and we do the same for the inactive components:
$$x_{\I_j}= (z_k)_{k \in \I_j},  \quad x_{\I^H_j} = (z^H_k)_{k \in \I^H_j}\quad \text{and} \quad x_{\I_j, \I^H_j} = (z_{\I_j}, z^H_{\I^H_j}).$$\\
\noindent
We are now in position to write the controlled dynamics in the interval $t_j \leq t \leq t_{j+1}$:
\begin{equation}
    \begin{cases}
    \dot{x}_{\I_j, \I^H_j}(t) = 0, \\
  \dot{x}_{\A_j, \A^H_j}(t) = \MG_j(t, x_{\A_j, \A^H_j}(t), \theta(t)),
    \end{cases}
\end{equation}
where $\MG_j: \left [ t_j, t_{j+1} \right]\times \RR^{| \A_j | + | \A_j^H |} \times \RR^{m} \to \RR^{| \A_j | + | \A_j^H |}$ , for $j=0,\ldots, 2r$, and $x^H_{\I_0}(0) = \underbar{0}$, $x_{\A_0}(0) =x_0$. As before, we define the discontinuous vector field $\MF$ for $t \in [t_j, t_{j+1}]$, as follows
\begin{equation*}
    \big(\MF(t,x,\theta)\big)_k = \begin{cases}
      \left (\MG(t, x_{\A_j},\theta) \right)_k, & \mbox{if }k \in \A_j \cup \A_j^H\\
      0, & \mbox{if }k \in \I_j \cup \I^N_j.
    \end{cases}
\end{equation*}
Hence, we are now able to describe any type of width-varying neural network through a continuous-time model depicted by the following dynamical system
\begin{equation*}
    \begin{cases}
    \dot{x}(t) = \MF(t,x(t), \theta(t)) &\mbox{a.e. in } [0,T],\\
    x(0) = x_0.
    \end{cases}
\end{equation*}
It is essential to highlight the key difference between the previous NeurODE model in (2.6) and the current model: the vector field $\MF$ now explicitly depends on the time variable $t$ to account for sudden dimensionality drops, where certain components are forced to remain constant. As a matter of fact, the resulting dynamics exhibit high discontinuity in the variable $t$. To the best of our knowledge, this is the first attempt to consider such discontinuous dynamics in NeurODEs. Previous works, such as \cite{haber2017stable, weinan2017proposal}, typically do not include an explicit dependence on the time variable in the right-hand side of NeurODEs, or they assume a continuous dependency on time, as in \cite{bonnet2023measure}. Furthermore, it is worth noting that the vector field $\MF$ introduced to model autoencoders satisfies the general assumptions outlined in Assumption 1 at the beginning of this section.

\begin{rmk}\label{rmk:u-net}
The presented model, initially designed for Autoencoders, can be easily extended to accommodate various types of width-varying neural networks, including architectures with long skip-connections such as U-nets \cite{ronneberger2015u}. While the specific details of U-nets are not discussed in detail, their general structure is outlined in Figure \ref{fig:sketch_unet}. U-nets consist of two main components: the contracting path (encoder) and the expansive path (decoder). These paths are symmetric, with skip connections between corresponding layers in each part. Within each path, the input passes through a series of convolutional layers, followed by a non-linear activation function (often ReLU), and other operations (e.g., max pooling) which are not encompassed by our model.
The long skip-connections that characterize U-nets require some modifications to the model of autoencoder described above.
If we denote with $\tilde{d}_i$ for $i=0, \ldots, r$ the dimensionality of each layer in the contracting path, we have that $\tilde d_{2r-i}=\tilde d_{i}$ for every $i=0, \ldots, r$.
Then, given an initial condition $z_0 \in \mathbb{R}^{\tilde{d}_0}$, we embed it into the augmented state variable 
\begin{equation*}
    x_0 = (z_0, \underline{0}), \mbox{ where } \underline{0} \in \mathbb{R}^{\tilde{d}_1 + \ldots + \tilde{d}_{r}}.
\end{equation*}
As done in the previous model for autoencoder, we consider time-nodes $0=t_0<\ldots<t_{2r}=T$, and in each sub-interval we introduce a controlled dynamics with the scheme of active/inactive components depicted in Figure \ref{fig:sketch_unet}.
\end{rmk}

\begin{figure}[ht!]
\begin{center}
    \includegraphics[scale=0.1]{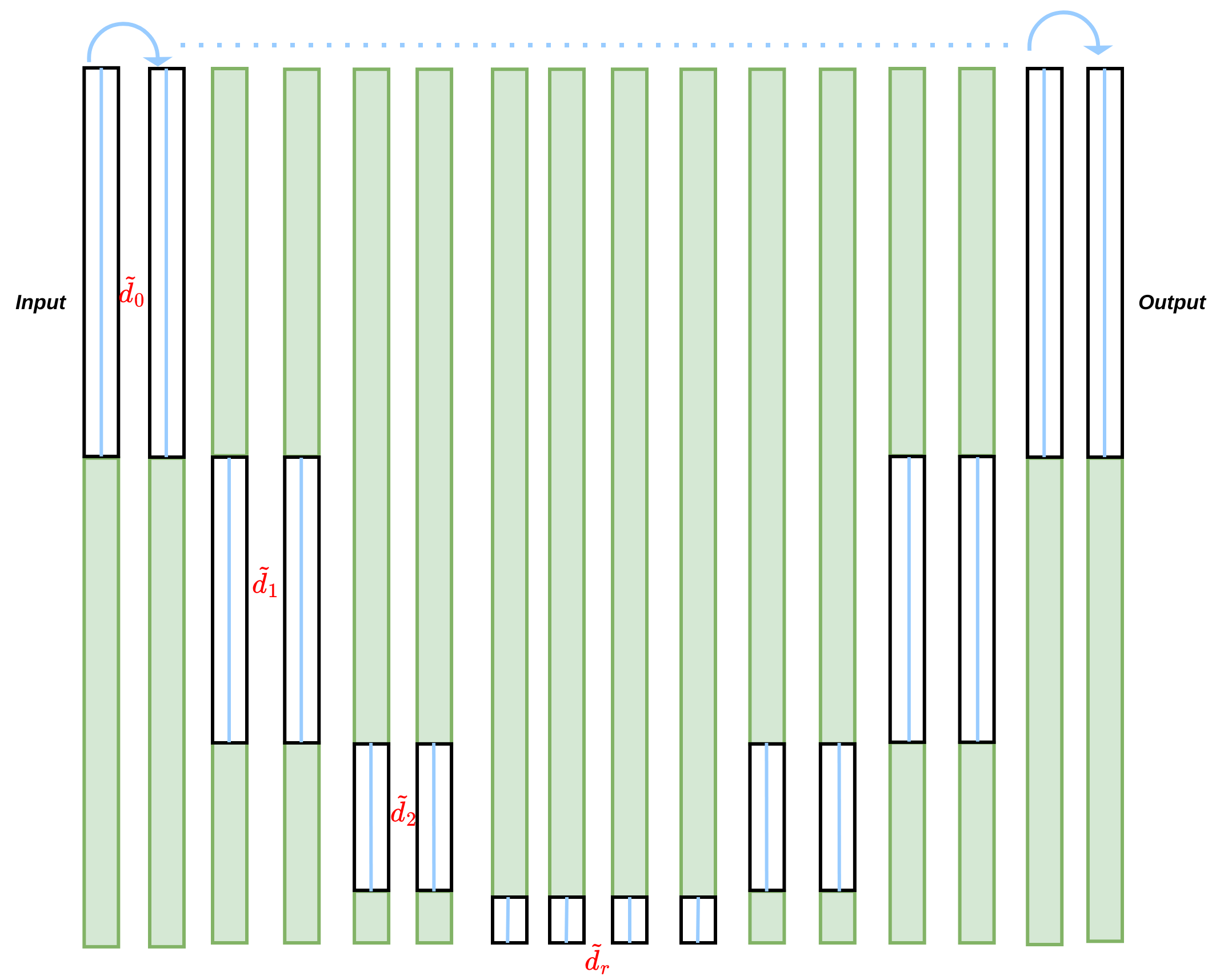}
\end{center}
\caption{Embedding of the U-net into a higher-dimensional dynamical system.}
\label{fig:sketch_unet}
\end{figure}

\section{Mean-field Analysis}\label{sec:mean-field}
In this section, we extend the dynamical model introduced in Section 2 to its mean-field limit, which corresponds to the scenario of an infinitely large dataset. Within this framework, we formulate the training of NeurODEs and AutoencODEs as a mean-field optimal control problem and provide the associated necessary optimality conditions. It is worth noting that our analysis covers both the high-regularized regime, as studied in previous work \cite{bonnet2023measure}, as well as the low-regularized regime, which has not been extensively addressed before. In this regard, we dedicate a subsection to a detailed comparison with the results obtained in \cite{bonnet2023measure}.
Additionally, we investigate the case of finite-particles approximation and we establish a quantitative bound on the generalization capabilities of these networks.

\subsection{Mean-field dynamical model}
In this section, we employ the same view-point as in \cite{bonnet2023measure}, and we consider the case of a dataset with an infinite number of observations. In our framework, each datum is modeled as a point $x_0 \in \R^d$, and it comes associated to its corresponding label $y_0 \in \R^{d}$. Notice that, in principle, in Machine Learning applications the label (or \textit{target}) datum $y_0$ may have dimension different from $d$. 
However, the labels' dimension is just a matter of notation and does not represent a limit of our model. Following \cite{bonnet2023measure}, we consider the curve  $t \mapsto (x(t),y(t))$ which satisfies
\begin{equation} \label{eq:ctrl_sys_extend}
\dot{x}(t)=\mathcal F(t,x(t),\theta(t)) \qquad \text{and} \qquad \dot{y}(t)=0
\end{equation}
for a.e. $t\in [0,T]$, and $(x(0),y(0))=(x_0,y_0)$.
We observe that the variable $y$ corresponding to the labels is not changing, nor it is affecting the evolution of the variable $x$.
We recall that the flow associated to the dynamics of the variable $x$ is denoted by $\Phi_{(0,t)}^\theta:\R^d \to \R^d$ for every $t\in [0,T]$, and it has been defined in \eqref{eq:flow}. 
Moreover, in regards to the full dynamics prescribed by \eqref{eq:ctrl_sys_extend}, for every admissible control $\theta\in L^2([0,T],\R^m)$ we introduce the extended flow $\BPhi_{(0,t)}^\theta:\R^d\times \R^d\to \R^d\times \R^d$,  which reads 
\begin{equation} \label{eq:flow_extend}
    \BPhi_{(0,t)}^\theta(x_0,y_0) = (\Phi^\theta_{(0,t)}(x_0), y_0)
\end{equation}
for every $t\in[0,T]$ and for every $(x_0,y_0)\in \R^d\times\R^d$. 
We now consider the case of an infinite number of labeled data $(X_0^i,Y_0^i)_{i\in I}$, where $I$ is an infinite set of indexes.  In our mathematical model, we understand this data distribution as a compactly-supported probability measure $\mu_0 \in \mathcal{P}_c(\R^{d}\times \R^{d})$. 
Moreover, for every $t \in [0,T]$, we denote by $t\mapsto\mu_t$ the curve of probability measures in $\mathcal{P}_c(\R^{d}\times \R^{d})$ that models the evolution of the solutions of \eqref{eq:ctrl_sys_extend} corresponding to the Cauchy initial conditions $(X_0^i,Y_0^i)_{i\in I}$. In other words, the curve $t\mapsto\mu_t$ satisfies the following continuity equation:
\begin{equation}
\label{eq:pde}
\partial_t\mu_t(x,y) + \nabla_x\cdot \big(\MF(t,x,\theta_t)\mu_t(x,y) \big)=0, \qquad \mu_t|_{t=0}(x,y)=\mu_0(x,y),
\end{equation}
understood in the sense of distributions, i.e.
\begin{definition}
\label{def:weak}
For any given $T>0$ and $\theta\in L^2([0,T],\R^m)$, we say that $\mu\in\mc{C}([0,T],\mc{P}_c(\RR^{2d}))$ is a weak solution of \eqref{eq:pde} on the time interval $[0,T]$ if   
\begin{equation}
\label{eqweak}
\int_0^T\int_{\RR^{2d}} \big( \partial_t \psi(t,x,y) + \nabla_x \psi(t,x,y) \cdot \MF(t,x,\theta_t) \big) \, d\mu_t(x,y)\, dt = 0,
\end{equation}
for every test function $\psi\in \MC_c^1((0,T)\times \RR^{2d})$.
\end{definition}
\noindent
Let us now discuss the existence and the characterisation of the solution.

\begin{proposition} \label{prop:exist_uniq_pde}
Under Assumptions \ref{ass:block1}, for every $\mu_0 \in \mc{P}_c(\R^{2d})$ we have that \eqref{eq:pde} admits a unique solution $t \mapsto \mu_t$ in the sense of Definition \ref{def:weak}. Moreover, we have that for every $t \in [0,T]$
\begin{equation}\label{eq:sol_flow}
    \mu_t = \BPhi_{(0,t) \#}^\theta\mu_0.
\end{equation}
\end{proposition}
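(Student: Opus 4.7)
The plan is to prove existence and uniqueness separately, with both steps reducing the PDE to the characteristic ODE via the extended flow $\BPhi^\theta_{(0,t)}$ defined in \eqref{eq:flow_extend}.

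\textbf{Existence.} First I would define $\mu_t := \BPhi^\theta_{(0,t)\#}\mu_0$ directly and verify that this candidate satisfies \eqref{eqweak}. By the change-of-variables formula \eqref{eq:push_forw}, for every $\psi \in \MC_c^1((0,T)\times\RR^{2d})$ one has
\begin{equation*}
\int_{\RR^{2d}} \psi(t,x,y)\, d\mu_t(x,y) = \int_{\RR^{2d}} \psi\bigl(t,\Phi^\theta_{(0,t)}(x_0),y_0\bigr)\, d\mu_0(x_0,y_0).
\end{equation*}
Since $t\mapsto \Phi^\theta_{(0,t)}(x_0)$ is absolutely continuous and solves \eqref{eq:ode}, differentiating under the integral (justified by the bounds of Proposition \ref{prop:flow_basics} and the compactness of $\supp\mu_0$) yields
\begin{equation*}
\frac{d}{dt}\int_{\RR^{2d}}\psi\, d\mu_t = \int_{\RR^{2d}}\bigl(\partial_t\psi(t,x,y) + \nabla_x\psi(t,x,y)\cdot \MF(t,x,\theta_t)\bigr)\, d\mu_t(x,y),
\end{equation*}
and integration over $[0,T]$, together with compact temporal support of $\psi$, gives \eqref{eqweak}. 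The membership $\mu \in \MC([0,T],\mc{P}_c(\RR^{2d}))$ follows from the first and third bullets of Proposition \ref{prop:flow_basics}: the uniform bound $\bar R$ forces $\supp\mu_t \subset B_{\bar R}(0)\times \supp_y\mu_0$, while the $1/2$-Hölder estimate on the flow controls $W_1(\mu_t,\mu_s)$ via Kantorovich duality \eqref{eq:Kanto}.

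\textbf{Uniqueness.} Let $\nu \in \MC([0,T],\mc{P}_c(\RR^{2d}))$ be any other weak solution with $\nu_0=\mu_0$. Because $\supp\nu_t$ is compact for each $t$ and remains contained in a fixed ball on $[0,T]$ (this can be argued a posteriori from \eqref{eqweak} applied to suitable cut-off test functions, or assumed as part of $\mc{P}_c$-valuedness), Assumption \ref{ass:block1} implies that the restriction of $\MF(t,\cdot,\theta_t)$ to this ball is uniformly Lipschitz in $x$ and $L^1$-in-time. Under this local-Lipschitz regularity, I would invoke the superposition principle of Ambrosio (see \cite[Ch.~8]{ambrosio2005gradient}): every such distributional solution is transported by its characteristic flow, which is exactly $\BPhi^\theta_{(0,t)}$. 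Hence $\nu_t = \BPhi^\theta_{(0,t)\#}\nu_0 = \BPhi^\theta_{(0,t)\#}\mu_0 = \mu_t$, establishing both \eqref{eq:sol_flow} and uniqueness.

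An alternative, more self-contained route avoids the superposition principle by a duality argument: for fixed $\tau\in(0,T)$ and $\varphi\in\MC_c^1(\RR^{2d})$, the backward test function $\psi(t,x,y):=\eta(t)\,\varphi(\BPhi^\theta_{(t,\tau)}(x,y))$ (with $\eta$ a smooth cut-off localizing near $\tau$) satisfies $\partial_t\psi + \nabla_x\psi\cdot\MF = \eta'(t)\varphi\circ\BPhi^\theta_{(t,\tau)}$ along the characteristics, so plugging it into \eqref{eqweak} for both $\mu_t$ and $\nu_t$ and passing $\eta\to\mathbf{1}_{[0,\tau]}$ yields $\int\varphi\, d\mu_\tau = \int\varphi\circ\BPhi^\theta_{(0,\tau)}\, d\mu_0 = \int\varphi\, d\nu_\tau$.

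\textbf{Main obstacle.} The principal technical point is that $\MF$ is only \emph{locally} Lipschitz in $x$ (the constant $L_R$ in Assumption \ref{ass:block1} depends on the ball $B_R(0)$) and is merely measurable in $t$, so one cannot directly quote the simplest versions of the uniqueness theorem for continuity equations. The resolution is that the a priori compactness of $\supp\nu_t$ confines the dynamics to a fixed ball, on which the local Lipschitz bound becomes a genuine global one; verifying this confinement rigorously for the competitor $\nu_t$ (rather than just for $\mu_t$) is the delicate step, and it is most cleanly handled by the superposition principle, which does not require such a confinement to be known in advance.
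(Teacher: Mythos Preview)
Your proposal is correct. The paper's own proof is a one-line citation: existence, uniqueness, and the flow representation are all deferred to \cite[Proposition~2.1, Theorem~3.1 and Remark~2.1]{ambrosio2008transport}. What you have written is essentially an unpacking of the content of that reference: the direct verification of the pushforward as a weak solution, and uniqueness via the superposition principle (or the equivalent backward-transport duality argument), together with the observation that the local Lipschitz bound of Assumption~\ref{ass:block1} suffices once the dynamics is confined to a compact set. Your discussion of the ``main obstacle'' is accurate and identifies precisely why the cited machinery is the natural tool here; the paper simply does not spell this out.
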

\begin{proof}
Existence and uniqueness of the measure solution of \eqref{eq:pde} follow from \cite[Proposition~2.1, Theorem~3.1 and Remark~2.1]{ambrosio2008transport}.
\end{proof}

\noindent
From the characterisation of the solution of \eqref{eq:pde} provided in \eqref{eq:sol_flow}, it follows that the curve $t \mapsto \mu_t$ inherits the properties of the flow map $\Phi^\theta$ described in Proposition \ref{prop:flow_basics}. These facts are collected in the next result.

\begin{proposition}\label{prop:flow_pde_basics}
Let us fix $T > 0$ and $\mu_0 \in \mathcal{P}_c(\R^{2d})$,
and let us consider $\MF: [0, T ]\times \R^d \times \R^m \to \R^d$ satisfying Assumption \ref{ass:block1}. Let $\theta \in L^2([0,T], \R^m)$ be an admissible control, and let $t \mapsto \mu_t$ be the corresponding solution  of \eqref{eq:pde}. Then, the curve $t \mapsto \mu_t$ satisfies the properties listed below.
\begin{itemize}
    \item For every $R>0$ and $\rho>0$, there exists $\bar R>0$ such that, for every $t \in [0,T]$, it holds that
    \begin{equation*}
        \supp( \mu_t) \subset B_{\bar{R}}(0)
    \end{equation*}
     for every $\theta \in L^2([0,T],\R^m)$ such that $\|\theta \|_{L^2} \leq \rho$, and for every $\mu_0$ such that $\supp(\mu_0)\subset B_R(0)$.
    \item For every $R>0$ and $\rho>0$, there exists $\bar{L}>0$ such that, for every $t \in [0,T]$, it holds that
    \begin{equation*}
        W_1(\mu_t, \nu_t) \leq \bar{L} W_1(\mu_0,\nu_0)
    \end{equation*}
     for every $\theta \in L^2([0,T],\R^m)$ such that $\|\theta \|_{L^2} \leq \rho$, and for every initial conditions $\mu_0,\nu_0$ such that the supports satisfy $\supp(\mu_0),\supp(\nu_0)\subset B_R(0)$,
    where $\mu_t = \BPhi^\theta_{(0,t)\#}\mu_0$ and $\nu_t = \BPhi^\theta_{(0,t)\#}\nu_0$.
    \item For every $R>0$ and $\rho>0$, there exists $\bar{L}>0$ such that, for every $t_1,t_2 \in [0,T]$, it holds that
    \begin{equation*}
        W_1(\mu_{t_1}, \mu_{t_2}) \leq \bar{L}\cdot |t_1-t_2|^\frac{1}{2}
    \end{equation*}
    for every $\theta \in L^2([0,T],\R^m)$ such that $\|\theta \|_{L^2} \leq \rho$, and for every $\mu_0$ such that $\supp(\mu_0)\subset B_R(0)$.
    \item 
    For every $R>0$ and $\rho>0$, there exists $\bar{L}>0$ such that, for every $t \in [0,T]$, it holds that
    \begin{equation*}
        W_1(\mu_t, \nu_t) \leq \bar{L}  \|\theta_1- \theta_2\|_{L^2}
    \end{equation*}
    for every $\theta_1,\theta_2\in L^2([0,T],\R^m)$ such that $\|\theta \|_{L^2} , \| \theta_2\|_{L^2} \leq \rho$, and for every initial condition $\mu_0$ such that $\supp(\mu_0)\subset B_R(0)$, 
    where $\mu_t = \BPhi^{\theta_1}_{(0,t)\#}\mu_0$ and $\nu_t = \BPhi^{\theta_2}_{(0,t)\#}\mu_0$.
\end{itemize}
\end{proposition}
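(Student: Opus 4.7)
The plan is to exploit the representation $\mu_t = \BPhi_{(0,t)\#}^\theta \mu_0$ established in Proposition~\ref{prop:exist_uniq_pde}, together with the explicit structure $\BPhi_{(0,t)}^\theta(x,y) = (\Phi_{(0,t)}^\theta(x), y)$, in order to transfer each pointwise estimate of Proposition~\ref{prop:flow_basics} to the corresponding statement about the curve of measures. In every case, the underlying idea is to build a convenient coupling of the two measures being compared by pushing forward $\mu_0$ through a suitable pair of flow maps, and then either invoke Kantorovich duality \eqref{eq:Kanto} or estimate the transport cost of the resulting plan directly.

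For the first item, if $(x,y) \in \supp(\mu_0) \subset B_R(0)$, then $\BPhi_{(0,t)}^\theta(x,y) = (\Phi_{(0,t)}^\theta(x), y)$ satisfies $|\BPhi_{(0,t)}^\theta(x,y)| \le (|\Phi_{(0,t)}^\theta(x)|^2 + |y|^2)^{1/2}$; the first term is bounded by the constant $\bar R$ produced by Proposition~\ref{prop:flow_basics} and the second by $R$. Since $\supp(\mu_t) = \BPhi_{(0,t)}^\theta(\supp(\mu_0))$, we get a uniform bound on the support of $\mu_t$.

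For the second item, given any optimal transport plan $\pi_0 \in \Pi(\mu_0, \nu_0)$, the push-forward $\pi_t := (\BPhi_{(0,t)}^\theta \times \BPhi_{(0,t)}^\theta)_\# \pi_0$ belongs to $\Pi(\mu_t, \nu_t)$; then the Lipschitz estimate on $\Phi_{(0,t)}^\theta$ from Proposition~\ref{prop:flow_basics} (combined with the fact that the $y$-component is left invariant) yields
\begin{equation*}
W_1(\mu_t, \nu_t) \le \int_{\R^{2d} \times \R^{2d}} |z - \hat z|\, d\pi_t(z,\hat z) \le \bar L \int |(x,y) - (\hat x, \hat y)|\, d\pi_0 = \bar L\, W_1(\mu_0, \nu_0),
\end{equation*}
where one uses the compactness of $\supp(\mu_0) \cup \supp(\nu_0) \subset B_R(0)$ to select the constant $\bar L$. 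Alternatively, one may test with any $1$-Lipschitz $\varphi$ in \eqref{eq:Kanto}, use the push-forward formula \eqref{eq:push_forw}, and observe that $\varphi \circ \BPhi_{(0,t)}^\theta$ is $\bar L$-Lipschitz on $B_R(0)$.

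The third and fourth items follow the same template, but with different coupling maps: for the Hölder-in-time bound, use the plan $(\BPhi_{(0,t_1)}^\theta, \BPhi_{(0,t_2)}^\theta)_\# \mu_0 \in \Pi(\mu_{t_1}, \mu_{t_2})$ and the $\tfrac12$-Hölder estimate of Proposition~\ref{prop:flow_basics}; for the Lipschitz-in-control bound, use the plan $(\BPhi_{(0,t)}^{\theta_1}, \BPhi_{(0,t)}^{\theta_2})_\# \mu_0 \in \Pi(\mu_t, \nu_t)$ and the corresponding continuity estimate with respect to $\theta$ in $L^2$. In both cases the $y$-component is preserved by $\BPhi$, so its contribution to the cost vanishes and only the $x$-component estimates from Proposition~\ref{prop:flow_basics} enter. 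The only mild obstacle is choosing the constants uniformly: one has to apply the support bound from the first item to locate, once and for all, a ball $B_{\bar R}(0)$ containing the supports of all the relevant measures along $[0,T]$, so that the Lipschitz constants from Proposition~\ref{prop:flow_basics} can be taken uniformly in $t$ and in the controls satisfying $\|\theta\|_{L^2} \le \rho$; this is routine bookkeeping rather than a genuine difficulty.
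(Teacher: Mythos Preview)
Your proposal is correct and follows essentially the same approach as the paper: both rely on the push-forward representation $\mu_t=\BPhi^\theta_{(0,t)\#}\mu_0$ from Proposition~\ref{prop:exist_uniq_pde} and transfer the pointwise flow estimates of Proposition~\ref{prop:flow_basics} to the measure level. The only cosmetic difference is that the paper carries out the detailed computation (for the second item) via Kantorovich duality~\eqref{eq:Kanto} and the change-of-variables formula~\eqref{eq:push_forw}, whereas you primarily argue via explicit couplings obtained by pushing $\mu_0$ (or an optimal plan $\pi_0$) through the relevant pair of flow maps; both devices are equivalent here and you even mention the duality variant yourself.
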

\begin{proof}
All the results follow from Proposition \ref{prop:exist_uniq_pde} and from the properties of the flow map presented in Proposition \ref{prop:flow_basics}, combined with the Kantorovich duality \eqref{eq:Kanto} for the distance $W_1$, and the change-of-variables formula \eqref{eq:push_forw}. Since the argument is essentially the same for all the properties, we detail the computations only for the second point, i.e., the Lipschitz-continuous dependence on the initial distribution.
Owing to \eqref{eq:Kanto},
for any $t \in [0,T]$, for any $\varphi \in \mathrm{Lip}(\R^{2d})$ such that its Lipschitz constant $Lip(\varphi)\leq1$, it holds that
\begin{equation*}
    \begin{split}
        W_1(\mu_t, \nu_t) &\leq \int_{\R^{2d}} \varphi(x,y) \,d(\mu_t - \nu_t)(x,y) 
         = \int_{\R^{2d}} \varphi(\Phi^\theta_{(0,t)}(x),y)\, d(\mu_0-\nu_0)(x,y) 
        \leq \bar{L} W_1(\mu_0,\nu_0),
    \end{split}
\end{equation*}
where the equality follows from the definition of push-forward and from \eqref{eq:flow_extend}, while the constant $\bar{L}$ in the second inequality descends from the local Lipschitz estimate of $\Phi^\theta_{(0,t)}$ established in Proposition \ref{prop:flow_basics}.
\end{proof}


\subsection{Mean-field optimal control}

Using the transport equation \eqref{eq:pde}, we can now formulate the mean-field optimal control problem that we aim to address. To this end, we introduce the functional $J:L^2([0,T],\R^m)\to \R$, defined as follows:
\begin{equation}
\label{eq:mf_functional}
 J(\theta) = 
\left\{
\begin{aligned}
& \int_{\RR^{2d}}\ell(x,y) \, d\mu_T(x,y)+\lambda\int_0^T|\theta(t)|^2 \, dt ,  \\
& \hspace{0.2cm} \text{s.t.} ~ \left\{
\begin{aligned}
& \partial_t\mu_t(x,y)+\nabla_x\cdot (\mathcal F(t,x,\theta_t)\mu_t(x,y))=0 & t\in[0,T],\\
& \mu_t|_{t=0}(x,y)=\mu_0(x,y),
\end{aligned}
\right.
\end{aligned}
\right.
\end{equation}
for every admissible control $\theta\in L^2([0,T],\R^m)$.
%
%
The objective is to find the optimal control $\theta^*$ that minimizes $J(\theta^*)$, subject to the PDE constraint \eqref{eq:pde} being satisfied by the curve $t\mapsto \mu_t$. The term "mean-field" emphasizes that $\theta$ is shared by an entire population of input-target pairs, and the optimal control must depend on the distribution of the initial data. We observe that when the initial measure $\mu_0$ is empirical, i.e.
\begin{equation*}
\mu_0 := \mu_0^N = \frac{1}{N} \sum_{i=1}^N \delta_{(X_0^i,Y_0^i)},
\end{equation*}
then minimization of \eqref{eq:mf_functional} reduces to a classical finite particle optimal control problem with ODE constraints. \\
We now state the further regularity hypotheses that we require, in addition to the one contained in Assumption~\ref{ass:block1}.
\begin{assum}
\label{ass:block2}
For any given $T>0$, the vector field $\MF$ satisfies the following.
\begin{enumerate}
\item[$(iv)$] For every $R>0$ there exists a constant  $L_{R} >0$ such that, for every $x_1,x_2 \in B_R(0) $, it holds
\begin{equation*}
|\nabla_x \MF(t,x_1,\theta)-\nabla_x \MF(t,x_2,\theta)| \leq L_R(1 + |\theta|^2)|x_1-x_2| ,\quad \mbox{ for a.e. } t\in [0,T] \mbox{ and every }\theta \in \R^m.
\end{equation*}
\item[$(v)$] There exists another constant  $L_{R} >0$ such that, for every $\theta_1,\theta_2 \in \R^m $, it holds
\begin{equation*}
|\nabla_\theta \MF(t,x,\theta_1)-\nabla_\theta \MF(t,x,\theta_2)| \leq L_R|\theta_1 - \theta_2| ,\quad \mbox{ for a.e. } t\in [0,T] \mbox{ and every }x \in B_R(0).
\end{equation*}
From this, it follows that
$|\nabla_\theta \MF(t,x,\theta)| \leq L_R(1+|\theta|)$ for every $x\in B_R(0)$ and for every $\theta\in \R^m$.
\item[$(vi)$] There exists another constant  $L_{R} >0$ such that, for every $\theta_1,\theta_2 \in \R^m $, it holds
\begin{equation*}
|\nabla_x \MF(t,x,\theta_1)-\nabla_x \MF(t,x,\theta_2)| \leq L_R(1 + |\theta_1| + |\theta_2|)|\theta_1-\theta_2| ,\quad \mbox{ for a.e. } t\in [0,T] \mbox{ and every }x \in B_R(0).
\end{equation*}
From this, it follows that
$|\nabla_x \MF(t,x,\theta)| \leq L_R(1+|\theta|^2)$ for every $x\in B_R(0)$ and for every $\theta\in \R^m$.
\item[$(vii)$] There exists another constant  $L_{R} >0$  such that 
\begin{equation*}
|\nabla_\theta \MF(t,x_1,\theta)-\nabla_\theta \MF(t,x_2,\theta)| \leq L_R(1 + |\theta|)|x_1-x_2| ,\quad \mbox{ for a.e. } t\in [0,T] \mbox{ and every }x_1,x_2 \in B_R(0).
\end{equation*}
\end{enumerate}	
\end{assum}
Additionally, it is necessary to specify the assumptions on the function $\ell$ that quantifies the discrepancy between the output of the network and its corresponding label.
\begin{assum}\label{ass:ell}
The function $\ell: \R^d\times \R^d \mapsto \R_+$ is $C^1$-regular and non-negative. Moreover, for every $R>0$, there exists a constant $L_R >0$ such that, for every  $x_1,x_2 \in B_R(0)$, it holds
\begin{equation}
    |\nabla_x\ell(x_1,y_1)-\nabla_x \ell(x_2,y_2)| \leq L_R \left( |x_1-x_2| + |y_1-y_2|\right).
\end{equation}
\end{assum}

Let us begin by establishing a regularity result for the reduced final cost, which refers to the cost function without the regularization term.
\begin{lem}[Differentiability of the cost]
\label{lem:FinalCostReg}
Let $T,R > 0$ and $\mu_0 \in \mathcal{P}_c(\R^{2d})$ be such that $\supp(\mu_0) \subset B_R(0)$, and let us consider $\MF:[0,T]\times \R^d\times \R^m\to \R^d$ and $\ell:\R^d\times\R^d\to \R$ that satisfy, respectively, Assumptions~\ref{ass:block1}-\ref{ass:block2}  and Assumption~\ref{ass:ell}. Then, the reduced final cost 
\begin{equation}
\label{eq:FinalReduced}
J_{\ell} : \theta \in L^2([0,T];\R^m) \mapsto
\left\{
\begin{aligned}
& \int_{\R^{2d}}\ell(x,y) \, d\mu_T^{\theta} (x,y),  \\
& \; \, \textnormal{s.t.} \,\,
\left\{ 
\begin{aligned}
& \partial_t \mu_t^{\theta}(x,y) + \nabla_x \big( \MF(t,x,\theta_t) \mu_t^{\theta}(x,y) \big) = 0, \\
& \mu_{t}^{\theta}|_{t=0}(x,y) = \mu_0(x,y), 
\end{aligned}
\right.
\end{aligned}
\right.
\end{equation}
is Fr\'echet-differentiable. Moreover, using the standard Hilbert space structure of $L^2([0,T],\R^m)$, we can represent the differential of $J_\ell$ at the point $\theta_0$ as the function:
\begin{equation}\label{eq:grad_J_ell}
   \nabla_{\theta} J_{\ell}(\theta) : t  \mapsto \int_{\R^{2d}} \nabla_{\theta} \MF ^ \top \big( t , \Phi^{\theta_0}_{(0,t)}(x),\theta(t) \big) \cdot \mathcal{R}^{\theta}_{(t,T)}(x)^{ \top}  \cdot \nabla_x \ell ^{ \top} \big(\Phi^{\theta}_{(0,T)}(x), y \big) \,d\mu_0(x,y)
\end{equation}
for a.e. $t\in[0,T]$.
\end{lem}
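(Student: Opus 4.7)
The strategy is to reduce the problem to a pointwise differentiation of the flow map $\theta \mapsto \Phi^\theta_{(0,T)}(x)$ and then integrate against $\mu_0$. First, I would apply Proposition~\ref{prop:exist_uniq_pde} and the change-of-variables formula \eqref{eq:push_forw} to rewrite
\begin{equation*}
J_\ell(\theta) = \int_{\R^{2d}} \ell\big(\Phi^\theta_{(0,T)}(x),y\big) \, d\mu_0(x,y),
\end{equation*}
so that the $\theta$-dependence is moved entirely onto the integrand through the flow map. By Proposition~\ref{prop:flow_basics}, for any $L^2$-bounded set of controls, all relevant trajectories remain inside a common ball $B_{\bar R}(0)$, which lets me invoke uniformly the local Lipschitz and $C^1$ estimates in Assumptions~\ref{ass:block1}--\ref{ass:block2} and \ref{ass:ell}.

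Next, I would establish a variational equation for the flow. Fix $\theta, \delta\theta \in L^2([0,T],\R^m)$ with $\|\delta\theta\|_{L^2}$ small, set $x(t) = \Phi^\theta_{(0,t)}(x)$ and $x^\varepsilon(t) = \Phi^{\theta+\varepsilon\delta\theta}_{(0,t)}(x)$, and introduce the candidate linearization $\zeta(t) = \lim_{\varepsilon\to 0} (x^\varepsilon(t)-x(t))/\varepsilon$ solving
\begin{equation*}
\dot\zeta(t) = \nabla_x \MF\bigl(t,x(t),\theta(t)\bigr)\,\zeta(t) + \nabla_\theta \MF\bigl(t,x(t),\theta(t)\bigr)\,\delta\theta(t),\qquad \zeta(0)=0.
\end{equation*}
Using Assumption~\ref{ass:block2}, a standard Taylor expansion of $\MF$ in both arguments together with a Gronwall argument yields $\sup_{t\in[0,T]} |x^\varepsilon(t)-x(t)-\varepsilon\zeta(t)| = o(\varepsilon)$ with the $o(\varepsilon)$ uniform on $L^2$-bounded sets of $\delta\theta$ and on $\supp\mu_0$. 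I would denote by $\mathcal{R}^\theta_{(t,T)}(x)$ the resolvent (state transition matrix) of the homogeneous linearized equation $\dot\eta = \nabla_x\MF(t,x(t),\theta(t))\eta$ so that, by Duhamel's formula,
\begin{equation*}
\zeta(T) = \int_0^T \mathcal{R}^\theta_{(t,T)}(x)\, \nabla_\theta \MF\bigl(t,\Phi^\theta_{(0,t)}(x),\theta(t)\bigr)\, \delta\theta(t)\, dt.
\end{equation*}

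Then I would apply the chain rule pointwise and integrate. Since $\ell$ is $C^1$ with locally Lipschitz gradient,
\begin{equation*}
\ell\bigl(x^\varepsilon(T),y\bigr) - \ell\bigl(x(T),y\bigr) = \varepsilon\, \nabla_x\ell\bigl(x(T),y\bigr)\cdot\zeta(T) + o(\varepsilon),
\end{equation*}
uniformly in $(x,y)\in\supp\mu_0$. Integrating against $\mu_0$, using dominated convergence (justified by the uniform bounds above), and swapping the order of integration via Fubini on $[0,T]\times\R^{2d}$ gives the Gâteaux derivative
\begin{equation*}
dJ_\ell(\theta)[\delta\theta] = \int_0^T \left\langle \int_{\R^{2d}} \nabla_\theta \MF^\top\,(\mathcal{R}^\theta_{(t,T)})^\top\,\nabla_x\ell^\top\, d\mu_0(x,y),\ \delta\theta(t)\right\rangle dt,
\end{equation*}
which, by Riesz representation in $L^2([0,T],\R^m)$, is exactly formula \eqref{eq:grad_J_ell}.

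Finally, to upgrade from Gâteaux to Fréchet differentiability I would show that the candidate differential depends continuously on $\theta$ in operator norm and that the $o(\varepsilon)$ remainders above are uniform in $\delta\theta$ on the unit ball of $L^2$. The main obstacle is precisely this uniform control: it requires (a) an $L^2\to L^\infty$-type estimate on the perturbed trajectories, obtained from Gronwall combined with Cauchy--Schwarz thanks to the growth bounds $|\MF|\leq L_R(1+|x|)(1+|\theta|)$ and $|\nabla_\theta\MF|\leq L_R(1+|\theta|)$ in Assumptions~\ref{ass:block1}--\ref{ass:block2}, and (b) uniform boundedness of the resolvent $\mathcal{R}^\theta_{(t,T)}$, which follows from the Carathéodory linear ODE it satisfies and the fact that $t\mapsto |\nabla_x\MF(t,x(t),\theta(t))|$ is integrable (bounded by $L_{\bar R}(1+|\theta(t)|^2)\in L^1$). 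Once these uniform estimates are in place, standard arguments (triangle inequality between the secant and the linear candidate) yield Fréchet differentiability and the stated formula.
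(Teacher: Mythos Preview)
Your proposal is correct and follows essentially the same route as the paper: rewrite $J_\ell$ via the push-forward as an integral of $\ell(\Phi^\theta_{(0,T)}(x),y)$ against $\mu_0$, linearize the flow in the control using the variational equation and the resolvent/Duhamel formula, Taylor-expand $\ell$, integrate and apply Fubini and Riesz. The only cosmetic difference is that the paper isolates the flow-differentiability step (your Gronwall/Taylor argument with the uniform $o(\varepsilon)$ in $x$ and in $\|\delta\theta\|_{L^2}\leq 1$) as a separate appendix proposition (Proposition~\ref{prop:diff_ctrl}), whereas you sketch it inline; the uniformity you identify as ``the main obstacle'' is exactly what that proposition provides.
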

Before proving the statement, we need to introduce the linear operator $\mathcal{R}^\theta_{\tau,s}(x):\R^d \to \R^d$ with $\tau,s\in [0,T]$, that is related to the linearization along a trajectory of the dynamics of the control system \eqref{eq:ode}, and that appears in \eqref{eq:grad_J_ell}.
Given an admissible control $\theta\in L^2([0,T],\R^m)$, let us consider the corresponding trajectory curve $t\mapsto \Phi_{(0,t)}^\theta(x)$ for $t\in[0,T]$, i.e., the solution of \eqref{eq:ode} starting at the point $x\in \R^d$ at the initial instant $t=0$. Given any $\tau \in [0,T]$, we consider the following linear ODE in the phase space $\R^{d\times d}$:
\begin{equation} \label{eq:ode_R}
    \begin{cases}
      \frac{d}{ds}\mathcal{R}^\theta_{(\tau,s)}(x) = \nabla_x \MF(s, \Phi_{(0,s)}^\theta(x),\theta(s)) 
      \cdot \mathcal{R}_{(\tau,s)}^\theta(x) &
      \mbox{for a.e. } s\in[0,T],\\
      \mathcal{R}_{(\tau,\tau)}^\theta(x) =
      \mathrm{Id}.
    \end{cases}
\end{equation}
We insist on the fact that, when we write $\mathcal{R}_{(\tau,s)}^\theta(x)$, $x$ denotes the starting point of the trajectory along which the dynamics has been linearized.
We observe that, using Assumption \ref{ass:block2}-$(iv)-(vi)$ and Caratheodory Theorem, it follows that \eqref{eq:ode_R} admits a unique solution, for every $x\in \R^d$ and for every $\tau\in[0,T]$. Since it is an elementary object in Control Theory, the properties of $\mathcal{R}^\theta$ are discussed in the Appendix (see Proposition \ref{prop:resolvent}). We just recall here that the following relation is satisfied:
\begin{equation}\label{eq:different_flow}
    \mathcal{R}^\theta_{\tau,s}(x) =
    \nabla_x \Phi^\theta _{(\tau, s)}\big|_{\Phi^\theta _{(0, \tau)}(x)}
\end{equation}
for every $\tau,s \in [0,T]$ and for every $x\in \R^d$ (see, e.g., \cite[Theorem~2.3.2]{Bressan_Piccoli}). Moreover,  for every $\tau,s \in [0,T]$ the following identity holds:
\begin{equation*}
    \mathcal{R}^\theta_{\tau,s}(x)\cdot 
    \mathcal{R}^\theta_{s,\tau}(x) = \mathrm{Id},
\end{equation*}
i.e., the matrices $\mathcal{R}^\theta_{\tau,s}(x), \mathcal{R}^\theta_{s,\tau}(x)$ are one the inverse of the other. From this fact, it is possible to deduce that
\begin{equation} \label{eq:back_evol_R}
    \frac{\partial}{\partial \tau}\mathcal{R}^\theta_{\tau,s}(x)
    = -
    \mathcal{R}^\theta_{\tau,s}(x) \cdot
    \nabla_x \MF (\tau,\Phi_{(0,\tau)}^\theta(x),\theta(\tau))
\end{equation}
for almost every $\tau,s\in [0,T]$ (see, e.g., \cite[Theorem 2.2.3]{Bressan_Piccoli} for the details).

\begin{proof}[Proof of Lemma \ref{lem:FinalCostReg}]
Let us fix an admissible control $\theta \in L^2([0,T];\R^m)$ and let $\mu^{\theta}_\cdot \in \mathcal{C}^0([0,T];\mathcal{P}_c(\R^{2d}))$ be the unique solution of the continuity equation \eqref{eq:pde}, corresponding to the control $\theta$ and satisfying $\mu^\theta|_{t=0}=\mu_0$. According to Proposition~\ref{prop:exist_uniq_pde}, this curve can be expressed as $\mu_t^{\theta} = \BPhi^{\theta}_{(0,t)\#} \mu_0$ for every $t\in[0,T]$, where the map $\BPhi^{\theta}_{(0,t)}=(\Phi_{(0,t)}^\theta,\mathrm{Id}):\R^{2d}\to\R^{2d}$ has been introduced in \eqref{eq:flow_extend} as the flow of the extended control system \eqref{eq:ctrl_sys_extend}. 
In particular, we can rewrite the terminal cost $J_\ell$ defined in \eqref{eq:FinalReduced} as 
\begin{equation*}
J_{\ell}(\theta) = \int_{\R^{2d}} \ell( \Phi^{\theta}_{(0,T)}(x),y )d\mu_0(x,y). 
\end{equation*}
In order to compute the gradient $\nabla_\theta J_\ell$, we preliminarily need to focus on the differentiability with respect to $\theta$ of the mapping $\theta\mapsto \ell( \Phi_{(0,T)}^\theta(x),y)$, when $(x,y)$ is fixed. Indeed, given another control $\vartheta \in L^2([0,T];\R^m)$ and $\varepsilon > 0$, from Proposition \ref{prop:diff_ctrl} it descends that
\begin{equation} \label{eq:tayl_exp_flow}
\begin{split}
\Phi^{\theta+\varepsilon \vartheta}_{(0,T)}(x) &= \Phi^{\theta}_{(0,T)}(x) + \varepsilon \xi^\theta(T) + o_{\theta}(\varepsilon) \\
&= \Phi^{\theta}_{(0,T)}(x) + \varepsilon \int_0^T\mathcal{R}^{\theta}_{(s,T)}(x) \nabla_{\theta} \MF ( s , \Phi^{\theta}_{(0,s)}(x) , \theta(s) ) \vartheta(s) ds + o_{\theta}(\varepsilon) 
\end{split} \qquad \mbox{ as } \varepsilon\to 0,
\end{equation}
where $o_{\theta}(\varepsilon)$ is uniform for every $x \in B_R(0)\subset \R^d$, and as $\vartheta$ varies in the unit ball of $L^2$. 
Owing to Assumption~\ref{ass:ell}, for every $x,y,v \in B_R(0)$ we observe that 
\begin{equation} \label{eq:tayl_exp_ell}
    |\ell(x+\varepsilon v +o(\varepsilon),y) -\ell(x,y) -\varepsilon \nabla_x \ell(x,y)\cdot v| \leq |\nabla_x \ell(x,y)| o(\varepsilon) + \frac{1}{2}L_{R}|\varepsilon v + o(\varepsilon)|^2 \qquad
    \mbox{as } \varepsilon \to 0.
    \end{equation}
Therefore, combining \eqref{eq:tayl_exp_flow} and \eqref{eq:tayl_exp_ell}, we obtain that
\begin{equation*}
    \ell(\Phi^{\theta + \varepsilon \vartheta}_{(0,T)}(x),y) - \ell(\Phi^\theta_{(0,T)}(x),y) = 
      \varepsilon \int_0^T\big( \nabla_x \ell(\Phi^\theta_{(0,T)}(x),y) \cdot \mathcal{R}^\theta_{(s,T)}(x)
      \cdot \nabla_\theta \MF(s, \Phi^\theta_{(0,s)}(x),\theta(s)) \big) \cdot \vartheta(s) ds + o_\theta(\varepsilon).
\end{equation*}
Since the previous expression is uniform for $x,y\in B_R(0)$, then if we integrate both sides of the last identity with respect to $\mu_0$, we have that
\begin{equation} \label{eq:expans_J_ell}
    J_\ell(\theta +\varepsilon\vartheta) 
    - J_\ell(\theta) = 
          \varepsilon \int_{\R^{2d}}\int_0^T\big( \nabla_x \ell(\Phi^\theta_{(0,T)}(x),y) \cdot \mathcal{R}^\theta_{(s,T)}(x)
          \cdot \nabla_\theta \MF(s, \Phi^\theta_{(0,s)}(x),\theta(s)) \big) \cdot \vartheta(s) \, ds\, d\mu_0(x,y) + o_\theta(\varepsilon).
\end{equation}
This proves the Fr\'echet differentiability of the functional $J_\ell$ at the point $\theta$.
We observe that, from Proposition~\ref{prop:flow_basics}, Proposition~\ref{prop:resolvent} and Assumption~\ref{ass:block2}, it follows that the function $s\mapsto \nabla_x \ell(\Phi^\theta_{(0,T)}(x),y) \cdot \mathcal{R}^\theta_{(s,T)}(x)\cdot \nabla_\theta \MF(s, \Phi^\theta_{(0,s)}(x),\theta(s))$ is uniformly bounded in $L^2$, as $x,y$ vary in $B_R(0)\subset \R^d$. Then, using Fubini Theorem, the first term of the expansion \eqref{eq:expans_J_ell} can be rewritten as
\begin{equation*}
   \int_0^T \left( \int_{\R^{2d}}\nabla_x \ell(\Phi^\theta_{(0,T)}(x),y) \cdot \mathcal{R}^\theta_{(s,T)}(x)
          \cdot \nabla_\theta \MF(s, \Phi^\theta_{(0,s)}(x),\theta(s)) \, d\mu_0(x,y) \right) \cdot \vartheta(s) \, ds.
\end{equation*}
Hence, from the previous asymptotic expansion and from Riesz Representation Theorem, we deduce \eqref{eq:grad_J_ell}.
\end{proof}

We now prove the most important result of this subsection, concerning the Lipschitz regularity of the gradient $\nabla_\theta J_\ell$.

\begin{proposition} \label{prop:lip_grad}
Under the same assumptions and notations as in Lemma~\ref{lem:FinalCostReg}, we have that the gradient $\nabla_\theta J_\ell:L^2([0,T],\R^m)\to L^2([0,T],\R^m)$ is Lipschitz-continuous on every bounded set of $L^2$.
More precisely, given $\theta_1,\theta_2 \in L^2([0,T];\R^m)$, there exists a constant $\mathcal{L}(T,R,\|\theta_1\|_{L^2}, \| \theta_2\|_{L^2}) > 0$ such that 
\begin{equation*}
\big\| \nabla_{\theta} J_{\ell}(\theta_1) - \nabla_{\theta} J_{\ell}(\theta_2) \big\|_{L^2} \leq \mathcal{L}(T,R,\|\theta_1\|_{L^2}, \| \theta_2\|_{L^2}) \, \big\| \theta_1 - \theta_2 \big\|_{L^2}.
\end{equation*}
\end{proposition}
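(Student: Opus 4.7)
The plan is to write $\nabla_\theta J_\ell(\theta)(t)$ as an integral over $\mu_0$ of the product of three factors,
\[
A^\theta(t,x) := \nabla_\theta \MF^\top\bigl(t,\Phi^{\theta}_{(0,t)}(x),\theta(t)\bigr), \quad B^\theta(t,x) := \mathcal{R}^\theta_{(t,T)}(x)^\top, \quad C^\theta(x,y) := \nabla_x\ell^\top\bigl(\Phi^{\theta}_{(0,T)}(x),y\bigr),
\]
and to estimate the difference $A^{\theta_1}B^{\theta_1}C^{\theta_1} - A^{\theta_2}B^{\theta_2}C^{\theta_2}$ by the usual three-term telescoping. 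Owing to Proposition~\ref{prop:flow_basics}, the trajectories $\Phi^{\theta_i}_{(0,t)}(x)$ remain in a common ball $B_{\bar R}(0)$ for $x\in \supp(\mu_0)\subset B_R(0)$ and $\|\theta_i\|_{L^2}\le \rho:=\max\{\|\theta_1\|_{L^2},\|\theta_2\|_{L^2}\}$, so all pointwise estimates from Assumptions~\ref{ass:block1}--\ref{ass:block2} and \ref{ass:ell} are available with a uniform constant $L_{\bar R}$.

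For the $C$-term I would use Assumption~\ref{ass:ell} together with the $L^2$-Lipschitz dependence of the flow on the control (last bullet of Proposition~\ref{prop:flow_basics}) to get $|C^{\theta_1}(x,y)-C^{\theta_2}(x,y)|\le \bar L\|\theta_1-\theta_2\|_{L^2}$ uniformly in $(x,y)\in\supp\mu_0$. For the $A$-term I would split
\[
A^{\theta_1}-A^{\theta_2} = \bigl[\nabla_\theta\MF(t,\Phi^{\theta_1}_{(0,t)}(x),\theta_1(t))-\nabla_\theta\MF(t,\Phi^{\theta_2}_{(0,t)}(x),\theta_1(t))\bigr]+\bigl[\nabla_\theta\MF(t,\Phi^{\theta_2}_{(0,t)}(x),\theta_1(t))-\nabla_\theta\MF(t,\Phi^{\theta_2}_{(0,t)}(x),\theta_2(t))\bigr],
\]
and use Assumption~\ref{ass:block2}-$(vii)$ on the first bracket (picking up a factor $(1+|\theta_1(t)|)$ and the flow-Lipschitz bound, which is uniform in $t$) and Assumption~\ref{ass:block2}-$(v)$ on the second bracket (giving $L_{\bar R}|\theta_1(t)-\theta_2(t)|$). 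Integrating the square in $t$, the first piece gives $(\sqrt T+\|\theta_1\|_{L^2})\bar L\|\theta_1-\theta_2\|_{L^2}$ and the second piece gives directly $L_{\bar R}\|\theta_1-\theta_2\|_{L^2}$. The factors $B^{\theta_i}$ and $C^{\theta_i}$ multiplying them are, respectively, uniformly bounded in $(t,x)$ and in $(x,y)$ by Proposition~\ref{prop:resolvent} and Assumption~\ref{ass:ell}, so Fubini reduces the $L^2$-norm estimate to the already-known bounds.

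The main obstacle is the $B$-term, i.e., the Lipschitz dependence of the resolvent on the control. For this I would start from the integral form of \eqref{eq:ode_R},
\[
\mathcal R^{\theta_i}_{(t,T)}(x) = \mathrm{Id} + \int_t^T \nabla_x\MF\bigl(s,\Phi^{\theta_i}_{(0,s)}(x),\theta_i(s)\bigr)\,\mathcal R^{\theta_i}_{(s,T)}(x)\,ds\qquad\text{(using \eqref{eq:back_evol_R})},
\]
subtract the two equations, and add/subtract $\nabla_x\MF(s,\Phi^{\theta_2}_{(0,s)}(x),\theta_2(s))\mathcal R^{\theta_1}_{(s,T)}(x)$ inside the integral. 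The ``driving'' difference is controlled by Assumption~\ref{ass:block2}-$(iv),(vi)$, which yields
\[
\bigl|\nabla_x\MF(s,\Phi^{\theta_1},\theta_1(s))-\nabla_x\MF(s,\Phi^{\theta_2},\theta_2(s))\bigr| \le L_{\bar R}(1+|\theta_1(s)|^2)|\Phi^{\theta_1}-\Phi^{\theta_2}|+L_{\bar R}(1+|\theta_1(s)|+|\theta_2(s)|)|\theta_1(s)-\theta_2(s)|,
\]
whose time-integral on $[t,T]$ is bounded by $C(T,\bar R,\rho)\|\theta_1-\theta_2\|_{L^2}$ after Cauchy--Schwarz, using the uniform flow-Lipschitz bound. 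The remainder is a linear integral equation for $\mathcal R^{\theta_1}-\mathcal R^{\theta_2}$ with kernel $|\nabla_x\MF(s,\cdot,\theta_2)|\le L_{\bar R}(1+|\theta_2(s)|^2)$, which lies in $L^1([0,T])$ with norm controlled by $\rho$, so a Grönwall argument (as in Proposition~\ref{prop:resolvent}) gives
\[
\sup_{t\in[0,T]}\bigl|\mathcal R^{\theta_1}_{(t,T)}(x)-\mathcal R^{\theta_2}_{(t,T)}(x)\bigr|\le \bar L\|\theta_1-\theta_2\|_{L^2}
\]
uniformly on $B_R(0)$. Plugging this estimate together with the $A$- and $C$-term bounds into the telescoped expression, integrating against $\mu_0$ (whose support is bounded) and taking the $L^2$-norm in $t$ — using that the remaining $t$-dependent factor $|A^{\theta_2}(t,x)|\le L_{\bar R}(1+|\theta_2(t)|)$ is uniformly $L^2$ in $t$ — produces the claimed Lipschitz estimate with a constant $\mathcal L(T,R,\|\theta_1\|_{L^2},\|\theta_2\|_{L^2})$.
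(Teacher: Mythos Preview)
Your proof is correct and follows essentially the same approach as the paper: a three-term telescoping on the integrand of \eqref{eq:grad_J_ell}, bounding the $\nabla_x\ell$-difference via Assumption~\ref{ass:ell} and the flow-in-control estimate, the $\nabla_\theta\MF$-difference via Assumption~\ref{ass:block2}-$(v),(vii)$, and the resolvent difference via Proposition~\ref{prop:resolvent}. The only minor differences are that the paper directly cites \eqref{eq:resolv_lip_ctrls} from Proposition~\ref{prop:resolvent} rather than re-deriving it, and that in your integral identity for $\mathcal R^{\theta}_{(t,T)}$ the factors should appear in the order $\mathcal R^{\theta}_{(s,T)}\,\nabla_x\MF$ (from \eqref{eq:back_evol_R}), though this slip does not affect the norm estimates.
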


\begin{proof}
%
Let us consider two admissible controls $\theta_1,\theta_2 \in L^2([0,T],\R^m)$ such that $\| \theta_1 \|_{L^2}, \| \theta_1 \|_{L^2} \leq C$. In order to simplify the notations, given $x\in B_R(0)\subset \R^d$, we define the curves $x_1:[0,T]\to\R^d$ and $x_2:[0,T]\to\R^d$ as 
\begin{equation*}
    x_1(t) := \Phi_{(0,t)}^{\theta_1}(x), \quad
    x_2(t) := \Phi_{(0,t)}^{\theta_2}(x)
\end{equation*}
for every $t\in[0,T]$, where the flows $\Phi^{\theta_1},\Phi^{\theta_2}$ where introduced in \eqref{eq:flow}. We recall that, in virtue of Proposition \ref{prop:flow_basics}, $x_1(t),x_2(t)\in B_{\bar R}(0)$ for every $t\in[0,1]$.
Then, for every $y\in B_R(0)$, we observe that
\begin{equation} \label{eq:comput_lip_grad_triang}
\begin{split}
    \Big|
    \nabla_{\theta} \MF ^ \top \big( t , x_1(t),  &\theta_1(t) \big)    \mathcal{R}^{\theta_1}_{(t,T)}(x)^{ \top}   \nabla_x \ell ^{ \top} \big(x_1(T), y \big) - 
    \nabla_{\theta} \MF ^ \top \big( t , x_1(t),\theta_2(t) \big)  \mathcal{R}^{\theta_2}_{(t,T)}(x)^{ \top}   \nabla_x \ell ^{ \top} \big(x_2(T), y \big)
    \Big|\\ &
     \leq 
    \left|
    \nabla_{\theta} \MF ^ \top \big( t , x_1(t),\theta_1(t) \big) \right|
    \left|
    \mathcal{R}^{\theta_1}_{(t,T)}(x)^{ \top}
    \right|
    \left|
    \nabla_x \ell ^{ \top} \big(x_1(T), y \big) - \nabla_x \ell ^{ \top} \big(x_2(T), y \big)
    \right|\\ &
    \quad  +
    \left|
    \nabla_{\theta} \MF ^ \top \big( t , x_1(t),\theta_1(t) \big) 
    \right|
        \left|
    \mathcal{R}^{\theta_1}_{(t,T)}(x)^{ \top} - \mathcal{R}^{\theta_2}_{(t,T)}(x)^{ \top}
    \right|
    \left|
    \nabla_x \ell ^{ \top} \big(x_2(T), y \big)
    \right|
    \\ &
    \quad  +
        \left|
    \nabla_{\theta} \MF ^ \top \big( t , x_1(t),\theta_1(t) \big) -
    \nabla_{\theta} \MF ^ \top \big( t , x_2(t),\theta_2(t) \big) 
    \right|
    \left|
    \mathcal{R}^{\theta_2}_{(t,T)}(x)^{ \top}
    \right|
    \left|
    \nabla_x \ell ^{ \top} \big(x_2(T), y \big)
    \right|
\end{split}
\end{equation}
for a.e. $t\in [0,T]$. We bound separately the three terms at the right-hand side of \eqref{eq:comput_lip_grad_triang}. As regards the first addend, from Assumption \ref{ass:block2}-$(v)$, Assumption \ref{ass:ell}, Proposition \ref{prop:resolvent} and Lemma \ref{lem:lip_flow_ctrl}, we deduce that there exists a positive constant $C_1>0$ such that
\begin{equation}\label{eq:comput_lip_grad_1}
    \begin{split}
    \left|
    \nabla_{\theta} \MF ^ \top \big( t , x_1(t),\theta_1(t) \big) \right|
    \left|
    \mathcal{R}^{\theta_1}_{(t,T)}(x)^{ \top}
    \right|&
    \left|
    \nabla_x \ell ^{ \top} \big(x_1(T), y \big) - \nabla_x \ell ^{ \top} \big(x_2(T), y \big)
    \right|\\
    &\leq 
    C_1 \left( 1 + |\theta_1(t)| \right)
    \| \theta_1 -\theta_2 \|_{L^2}
    \end{split}
\end{equation}
for a.e. $t\in [0,T]$. Similarly, using again Assumption \ref{ass:block2}-$(v)$, Assumption \ref{ass:ell}, and Proposition \ref{prop:resolvent} on the second addend at the right-hand side of \eqref{eq:comput_lip_grad_triang}, we obtain that there exists $C_2>0$ such that
\begin{equation}\label{eq:comput_lip_grad_2}
    \left|
    \nabla_{\theta} \MF ^ \top \big( t , x_1(t),\theta_1(t) \big) \right|
    \left|
    \mathcal{R}^{\theta_1}_{(t,T)}(x)^{ \top} - \mathcal{R}^{\theta_2}_{(t,T)}(x)^{ \top}
    \right|
    \left| \nabla_x \ell ^{ \top} \big(x_2(T), y \big)
    \right|
    \leq 
    C_2 \left( 1 + |\theta_1(t)| \right)
    \| \theta_1 -\theta_2 \|_{L^2}
\end{equation}
for a.e. $t\in [0,T]$.
Moreover, the third term can be bounded as follows: 
\begin{equation}\label{eq:comput_lip_grad_3}
\begin{split}
    |
     \nabla_{\theta} \MF ^ \top \big( t , x_1(t),\theta_1(t) \big) - & \nabla_{\theta} \MF ^ \top \big( t , x_2(t),\theta_2(t) \big) |
    \left|
    \mathcal{R}^{\theta_2}_{(t,T)}(x)^{ \top}
    \right|
    \left| \nabla_x \ell ^{ \top} \big(x_2(T), y \big)
    \right| \\
& 
\leq C_3 \Big[ (1 + |\theta_1(t)|) \| \theta_1 -\theta_2\|_{L^2} + |\theta_1(t) -  \theta_2(t)| \Big]
\end{split}
\end{equation}
for a.e. $t\in [0,T]$, where we used Assumption \ref{ass:block2}-$(v)-(vii)$, Proposition \ref{prop:resolvent} and Lemma \ref{lem:lip_flow_ctrl}. Therefore, combining \eqref{eq:comput_lip_grad_triang}-\eqref{eq:comput_lip_grad_3}, we deduce that
\begin{equation} \label{eq:comput_lip_grad_fin}
    \begin{split}
    \Big|
    \nabla_{\theta} \MF ^ \top \big( t , x_1(t),  \theta_1(t) \big)  &   \mathcal{R}^{\theta_1}_{(t,T)}(x)^{ \top}   \nabla_x \ell ^{ \top} \big(x_1(T), y \big) - 
    \nabla_{\theta} \MF ^ \top \big( t , x_1(t),\theta_2(t) \big)  \mathcal{R}^{\theta_2}_{(t,T)}(x)^{ \top}  \nabla_x \ell ^{ \top} \big(x_2(T), y \big)
    \Big|\\ &
    \leq \bar C \Big[ (1 + |\theta_1(t)|) \| \theta_1 -\theta_2\|_{L^2} + |\theta_1(t) -  \theta_2(t)| \Big]
    \end{split}
\end{equation}
for a.e. $t\in [0,T]$. We observe that the last inequality holds for every $x,y\in B_R(0)$. Therefore, if we integrate both sides of \eqref{eq:comput_lip_grad_fin} with respect to the probability measure $\mu_0$, recalling the expression of the gradient of $J_\ell$ reported in \eqref{eq:grad_J_ell}, we have that
\begin{equation}
    |\nabla_\theta J_\ell(\theta_1)[t] - \nabla_\theta J_\ell(\theta_1)[t]|
    \leq \bar C \Big[ (1 + |\theta_1(t)|) \| \theta_1 -\theta_2\|_{L^2} + |\theta_1(t) -  \theta_2(t)| \Big]
\end{equation}
for a.e. $t\in [0,T]$, and this concludes the proof.
\end{proof}

From the previous result we can deduce that the terminal cost $J_\ell:L^2([0,T],\R^m)\to \R$ is locally semi-convex.

\begin{cor}[Local semiconvexity of the cost functional]
\label{cor:Semiconvexity}
Under the same assumptions and notations as in Lemma~\ref{lem:FinalCostReg}, let us consider a bounded subset $\Gamma \subset L^2([0,T];\R^m)$. Then, $\nabla_\theta J:L^2([0,T])\to L^2([0,T])$ is Lipschitz continuous on $\Gamma$. Moreover, there exists a constant $\mathcal{L}(T,R,\Gamma) > 0$ such that the cost functional $J:L^2([0,T],\R^m)\to \R$ defined in \eqref{eq:mf_functional} satisfies the following semiconvexity estimate: 
\begin{equation}
\label{eq:Semiconvexity}
J \big((1-\zeta)\theta_1 + \zeta \theta_2 \big) \leq (1-\zeta) J(\theta_1) + \zeta J(\theta_2) - (2\lambda - \mathcal{L}(T,R,\Gamma)) \tfrac{\zeta(1-\zeta)}{2} \| \theta_1 - \theta_2\|_2^2 
\end{equation}
for every $\theta_1,\theta_2 \in \Gamma$ and for every $\zeta \in [0,1]$. In particular, if $\lambda > \tfrac{1}{2} \mathcal{L}(T,R,\Gamma)$, the cost functional $J$ is strictly convex over $\Gamma$.
\end{cor}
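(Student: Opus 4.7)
The plan is to exploit the decomposition $J(\theta) = J_\ell(\theta) + \lambda\|\theta\|_{L^2}^2$ and to treat the two summands separately: I will deduce a semi-convexity bound for $J_\ell$ from Proposition~\ref{prop:lip_grad}, and combine it with the exact strong convexity of $\lambda\|\cdot\|_{L^2}^2$. Since $\Gamma$ is bounded, there exists $\rho>0$ such that $\|\theta\|_{L^2}\leq\rho$ for every $\theta\in\Gamma$. The convex hull $\mathrm{co}(\Gamma)$ is contained in the same $L^2$-ball of radius $\rho$, so Proposition~\ref{prop:lip_grad} furnishes a constant $\mathcal{L}(T,R,\Gamma)>0$ such that $\nabla_\theta J_\ell$ is $\mathcal{L}(T,R,\Gamma)$-Lipschitz on $\mathrm{co}(\Gamma)$. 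This already establishes the first assertion of the corollary.

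Next, I will derive the semi-convexity of $J_\ell$ on $\Gamma$ from the Lipschitz estimate via the standard Hilbert-space argument. For $\theta_1,\theta_2\in\Gamma$ and $\zeta\in[0,1]$, set $\theta_\zeta:=(1-\zeta)\theta_1+\zeta\theta_2\in\mathrm{co}(\Gamma)$, and apply the fundamental theorem of calculus to the $C^1$ map $s\mapsto J_\ell(\theta_\zeta+s(\theta_i-\theta_\zeta))$ to rewrite $J_\ell(\theta_i)-J_\ell(\theta_\zeta)-\langle\nabla_\theta J_\ell(\theta_\zeta),\theta_i-\theta_\zeta\rangle_{L^2}$, for $i=1,2$, as an integral that is bounded in absolute value, via Cauchy--Schwarz and the Lipschitz estimate on $\mathrm{co}(\Gamma)$, by $\tfrac{\mathcal{L}(T,R,\Gamma)}{2}\|\theta_i-\theta_\zeta\|_{L^2}^2$. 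Forming the convex combination of the two inequalities with weights $(1-\zeta)$ and $\zeta$ makes the first-order inner-product terms cancel by construction of $\theta_\zeta$, and the elementary identity $(1-\zeta)\|\theta_1-\theta_\zeta\|_{L^2}^2+\zeta\|\theta_2-\theta_\zeta\|_{L^2}^2=\zeta(1-\zeta)\|\theta_1-\theta_2\|_{L^2}^2$ delivers the semi-convexity bound
\begin{equation*}
J_\ell(\theta_\zeta)\leq (1-\zeta)J_\ell(\theta_1)+\zeta J_\ell(\theta_2)+\tfrac{\mathcal{L}(T,R,\Gamma)\,\zeta(1-\zeta)}{2}\,\|\theta_1-\theta_2\|_{L^2}^2.
\end{equation*}

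Finally, the parallelogram law gives the exact identity $\lambda\|\theta_\zeta\|_{L^2}^2 = (1-\zeta)\lambda\|\theta_1\|_{L^2}^2+\zeta\lambda\|\theta_2\|_{L^2}^2-\lambda\,\zeta(1-\zeta)\|\theta_1-\theta_2\|_{L^2}^2$, and summing it with the previous bound produces exactly \eqref{eq:Semiconvexity}. The strict convexity claim, in the regime $\lambda>\tfrac{1}{2}\mathcal{L}(T,R,\Gamma)$, is then immediate because the coefficient $(2\lambda-\mathcal{L}(T,R,\Gamma))$ becomes strictly positive, so the inequality is strict whenever $\theta_1\neq\theta_2$ and $\zeta\in(0,1)$. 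I do not foresee any genuine obstacle: Proposition~\ref{prop:lip_grad} does all the heavy lifting, and the only mild point of care is that the Lipschitz bound must be applied on $\mathrm{co}(\Gamma)$ rather than merely on $\Gamma$, so that the estimate is available along every segment joining two elements of $\Gamma$.
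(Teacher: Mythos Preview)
Your proposal is correct and follows essentially the same approach as the paper: decompose $J=J_\ell+\lambda\|\cdot\|_{L^2}^2$, invoke Proposition~\ref{prop:lip_grad} for the Lipschitz bound on $\nabla_\theta J_\ell$, deduce the semiconvexity of $J_\ell$, and combine with the parallelogram identity for the quadratic term. You are in fact more careful than the paper in spelling out the passage from the Lipschitz gradient to the semiconvexity inequality and in noting that the Lipschitz estimate must hold on $\mathrm{co}(\Gamma)$ so that it applies along segments.
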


\begin{proof}
We recall that $J(\theta) = J_\ell(\theta) + \lambda\| \theta\|_{L^2}^2$, where $J_\ell$ has been introduced in \eqref{eq:FinalReduced}.
Owing to Proposition \ref{prop:lip_grad}, it follows that $\nabla_\theta J_\ell$ is Lipschitz continuous on $\Gamma$ with constant $\mathcal{L}(T,R,\Gamma)$. This implies that $J$ is Lipschitz continuous as well on $\Gamma$. Moreover, it descends that 
\begin{equation*}
    J_{\ell} \big( (1-\zeta)\theta_1 + \zeta \theta_2 \big) \leq 
    (1-\zeta) J_{\ell}(\theta_1) + \zeta J_{\ell}(\theta_2) + \mathcal{L}(T,R,\Gamma) \tfrac{\zeta(1-\zeta)}{2} \| \theta_1-\theta_2 \|_{L^2}^2
\end{equation*}
 for every $\theta_1,\theta_2\in\Gamma$ and for every $\zeta\in[0,1]$.
On the other hand, recalling that 
\begin{equation*}
\| (1-\zeta) \theta_1 + \zeta \theta_2 \|_{L^2}^2 = (1-\zeta)\|\theta_1\|_{L^2}^2 + \zeta\|\theta_2\|_{L^2}^2 - 
\zeta(1-\zeta) \| \theta_1 - \theta_2 \|_{L^2}^2
\end{equation*}
for every $\theta_1,\theta_2\in L^2$, we immediately deduce \eqref{eq:Semiconvexity}. 
\end{proof}

\begin{rmk} \label{rmk:exist_minim}
When the parameter $\lambda>0$ that tunes the $L^2$-regularization is large enough, we can show that the functional $J$ defined by \eqref{eq:mf_functional} admits a unique global minimizer.
Indeed, since the control identically $0$ is an admissible competitor, we have that
\begin{equation*}
    \inf_{\theta\in L^2} J(\theta) \leq J(0) = J_\ell(0),
\end{equation*}
where we observe that the right-hand side is not affected by the value of $\lambda$. Hence, recalling that $J(\theta) = J_\ell(\theta)+\lambda\| \theta \|_{L^2}^2$, we have that the sublevel set $\{ \theta: J(\theta) \leq J_\ell(0) \}$ is included in the ball $B_\lambda :=\{\theta: \| \theta\|_{L^2}^2 \leq \frac1\lambda J_\ell(0) \}$. Since these balls are decreasing as $\lambda$ increases, owing to Corollary \ref{cor:Semiconvexity}, we deduce that there exists a parameter $\bar \lambda>0$ such that the cost functional $J$ is strongly convex when restricted to $B_{\bar \lambda}$. 
Then, Lemma \ref{lem:FinalCostReg} guarantees that the functional $J:L^2([0,T],\R^m)\to\R$ introduced in \eqref{eq:mf_functional} is continuous with respect to the strong topology of $L^2$, while the convexity implies that it is weakly lower semi-continuous as well.
Being the ball $B_{\bar \lambda}$ weakly compact, we deduce that the restriction to $B_{\bar \lambda}$ of the functional $J$ admits a unique minimizer $\theta^*$. However, since $B_{\bar \lambda}$ includes the sublevel set $\{ \theta: J(\theta) \leq J_\ell(0) \}$, it follows that $\theta^*$ is actually the unique global minimizer.
It is interesting to observe that, even though $\lambda$ is chosen large enough to ensure existence (and uniqueness) of the global minimizer, it is not possible to conclude that the functional $J$ is globally convex. This is essentially due to the fact that Corollary \ref{cor:Semiconvexity} holds only on bounded subsets of $L^2$.
\end{rmk}

Taking advantage of the representation of the gradient of the terminal cost $J_\ell$ provided by \eqref{eq:grad_J_ell}, we can  formulate the necessary optimality conditions for the cost $J$ introduced in \eqref{eq:mf_functional}.
In order to do that, we introduce the function $p:[0,T]\times \R^d\times \R^d\to\R^d$  as follows:
\begin{equation} \label{eq:def_p}
    p_t(x,y) :=
     \nabla_x \ell (\Phi_{(0,T)}^\theta(x),y)
     \cdot \mathcal{R}^\theta_{(t,T)}(x),
\end{equation}
where $ \mathcal{R}^\theta_{(t,T)}(x)$ is defined according to \eqref{eq:ode_R}. We observe that $p$ (as well as $\nabla_x \ell$) should be understood as a row vector. 
Moreover, using \eqref{eq:back_evol_R}, we deduce that, for every $x,y\in \R^d$, the $t\mapsto p_t(x,y)$ is solving the following backward Cauchy problem:
\begin{equation}\label{eq:ode_p}
    \frac{\partial}{\partial t} p_t(x,y) =
      -p_t(x,y)\cdot \nabla_x 
      \MF(t,\Phi_{(0,t)}^\theta(x),\theta(t))
      ,\qquad
      p_T(x,y) = \nabla_x \ell (\Phi_{(0,T)}^\theta(x),y).
\end{equation}
Hence, we can equivalently rewrite $\nabla_\theta J_\ell$ using $p$:
\begin{equation}\label{eq:diff_Jell_with_p}
    \nabla_\theta J_\ell(\theta)[t]
    = \int_{\R^{2d}} \nabla_{\theta} \MF ^ \top \big( t , \Phi^{\theta}_{(0,t)}(x),\theta(t) \big) \cdot p^{\top}_t( 
    x,y) \,d\mu_0(x,y)
\end{equation}
for almost every $t\in [0,T]$.
Therefore, recalling that $J(\theta) = J_\ell(\theta) + \lambda\|\theta \|_{L^2}^2$, we deduce that the stationary condition $\nabla_\theta J(\theta^*) = 0$ can be rephrased as 
\begin{equation}\label{eq:nec_opt_cond}
    \left\{
    \begin{aligned}
      &\partial_t \mu^*_t(x,y) + \nabla_x\cdot \big(\mathcal F(t,x ,\theta^*(t))\mu_t^*(x,y)\big)=0, & \mu_t^*|_{t=0}(x,y)=\mu_0(x,y),\\
      &\partial_t p^*_t(x,y) =
      -p^*_t(x,y)\cdot \nabla_x 
      \MF(t,\Phi_{(0,t)}^{\theta^*}(x),\theta^*(t)),
      &
      p^*_t|_{t=T}(x,y) = \nabla_x \ell (\Phi_{(0,T)}^{\theta^*}(x),y), \\
      &\theta^*(t) = -\frac1{2\lambda} \int_{\R^{2d}} \nabla_\theta \MF ^ \top \big( t , \Phi^{\theta^*}_{(0,t)}(x),{\theta^*}(t) \big) \cdot p^{*\,\top}_t(x,y) \,d\mu_0(x,y). & 
    \end{aligned}
    \right.
\end{equation}

\begin{rmk}\label{rmk:role_p}
The computation of $p$ through the backward integration of \eqref{eq:ode_p} can be interpreted as the control theoretic equivalent of the ``back-propagation of the gradients''. We observe that, in order to check whether \eqref{eq:nec_opt_cond} is satisfied, it is sufficient to evaluate $p^{*}$ only on $\supp(\mu_0)$. Moreover, the evaluation of $p^{*}$ on different points $(x_1,y_1),(x_2,y_2)\in \supp(\mu_0)$ involves the resolution of two uncoupled backward ODEs. This means that, when dealing with a measure $\mu_0$ that charges only finitely many points, we can solve the equation \eqref{eq:ode_p} in parallel for every point in $\supp(\mu_0)$.
\end{rmk}

In virtue of Proposition \ref{prop:lip_grad}, we can study the gradient flow induced by the cost functional $J:L^2([0,T],\R^m)\to\R$  on its domain. More precisely, given an admissible control $\theta_0\in L^2([0,T],\R^m)$, we consider the gradient flow equation:
\begin{equation} \label{eq:grad_flow}
    \begin{cases}
      \dot \theta(\omega) = -\nabla_\theta J(\theta(\omega)) & \mbox{for } \omega\geq 0,\\
      \theta(0) =\theta_0.
    \end{cases}
\end{equation}
In the next result we show that the gradient flow equation \eqref{eq:grad_flow} is well-posed and that the solution is defined for every $\omega\geq 0$. In the particular case of linear-control systems, the properties of the gradient flow trajectories has been investigated in \cite{scag2022gradient}. 

\begin{lem} \label{lem:grad_flow}
Let $T,R > 0$ and $\mu_0 \in \mathcal{P}_c(\R^{2d})$ be a probability measure such that $\supp(\mu_0) \subset B_R(0)$, and let us consider $\MF:[0,T]\times \R^d\times \R^m\to \R^d$ and $\ell:\R^d\times\R^d\to \R$ that satisfy, respectively, Assumptions~\ref{ass:block1}-\ref{ass:block2}  and Assumption~\ref{ass:ell}.
Then, for every $\theta_0\in L^2([0,T],\R^m)$, the gradient flow equation \eqref{eq:grad_flow} admits a unique solution $\omega\mapsto \theta(\omega)$ of class $C^1$ that is defined for every $\omega\in [0,+\infty)$.
\end{lem}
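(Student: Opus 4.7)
The plan is to deduce the statement from the classical Cauchy--Lipschitz theorem in Hilbert spaces (applied on $L^2([0,T],\R^m)$), together with an a priori $L^2$-bound on the trajectory obtained from the monotonicity of $J$ along its gradient flow and the non-negativity of $\ell$.

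First I would establish local existence and uniqueness. Since $J(\theta) = J_\ell(\theta) + \lambda \|\theta\|_{L^2}^2$, the gradient is $\nabla_\theta J(\theta) = \nabla_\theta J_\ell(\theta) + 2\lambda\, \theta$. Proposition~\ref{prop:lip_grad} guarantees that $\nabla_\theta J_\ell$ is Lipschitz continuous on every bounded subset of $L^2([0,T],\R^m)$, and the term $\theta \mapsto 2\lambda\, \theta$ is globally Lipschitz. Hence the vector field $-\nabla_\theta J : L^2([0,T],\R^m) \to L^2([0,T],\R^m)$ is locally Lipschitz. The Cauchy--Lipschitz theorem in Banach spaces then yields a unique $C^1$ solution $\omega \mapsto \theta(\omega)$ of \eqref{eq:grad_flow} defined on a maximal forward interval $[0,\omega^*)$, with the blow-up alternative: either $\omega^* = +\infty$, or $\|\theta(\omega)\|_{L^2} \to +\infty$ as $\omega \to \omega^{*-}$.

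Next I would rule out the blow-up by deriving a uniform a priori bound on $\|\theta(\omega)\|_{L^2}$. Since $\theta$ is $C^1$ and $J$ is Fr\'echet-differentiable (Lemma~\ref{lem:FinalCostReg}), along the flow we have
\begin{equation*}
    \frac{d}{d\omega} J(\theta(\omega))
    = \langle \nabla_\theta J(\theta(\omega)),\, \dot\theta(\omega) \rangle_{L^2}
    = -\|\nabla_\theta J(\theta(\omega))\|_{L^2}^2 \leq 0,
\end{equation*}
so $J(\theta(\omega)) \leq J(\theta_0)$ for every $\omega \in [0,\omega^*)$. Since Assumption~\ref{ass:ell} gives $\ell \geq 0$ and $\mu_T^\theta$ is a probability measure, the reduced cost satisfies $J_\ell(\theta) \geq 0$, and therefore
\begin{equation*}
    \lambda \,\|\theta(\omega)\|_{L^2}^2 \leq J(\theta(\omega)) \leq J(\theta_0)
    \qquad \text{for every } \omega \in [0,\omega^*).
\end{equation*}
Thus $\|\theta(\omega)\|_{L^2} \leq \sqrt{J(\theta_0)/\lambda}$, which contradicts the blow-up alternative unless $\omega^* = +\infty$.

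The only mildly delicate point is ensuring that Cauchy--Lipschitz genuinely applies in the infinite-dimensional setting $L^2([0,T],\R^m)$; this is standard because $L^2$ is a Hilbert space and $-\nabla_\theta J$ is locally Lipschitz there. No compactness is needed, and the monotonicity of $\omega \mapsto J(\theta(\omega))$ is enough to confine the trajectory to the bounded sublevel set $\{\theta : J(\theta) \leq J(\theta_0)\} \subset \{\|\theta\|_{L^2}^2 \leq J(\theta_0)/\lambda\}$, on which the Lipschitz constant provided by Proposition~\ref{prop:lip_grad} is a fixed finite number. Combining local well-posedness with this a priori bound yields the desired global $C^1$ solution on $[0,+\infty)$.
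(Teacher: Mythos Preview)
Your proof is correct and follows essentially the same approach as the paper: local existence via the Cauchy--Lipschitz theorem in $L^2$ using the local Lipschitz continuity of $\nabla_\theta J$ from Proposition~\ref{prop:lip_grad}, combined with the a priori bound $\|\theta(\omega)\|_{L^2}^2 \leq J(\theta_0)/\lambda$ obtained from the monotonicity of $J$ along the flow and the non-negativity of $\ell$. The only cosmetic difference is that you phrase the globalization via the blow-up alternative, whereas the paper argues by recursive extension on intervals $[0,M]$ after observing that the trajectory remains in the sublevel set.
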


\begin{proof}
Let us consider $\theta_0 \in L^2([0,T],\R^m)$, and let us introduce the sub-level set 
\[
\Gamma:=\{ \theta\in L^2([0,T],\R^m): J(\theta)\leq J(\theta_0) \},
\]
where $J$ is the functional introduced in \eqref{eq:mf_functional} defining the mean-field optimal control problem. Using the fact that the end-point cost $\ell:\R^d\times\R^d \to \R_+$ is non-negative, we deduce that $\Gamma\subset \{ \theta\in L^2([0,T],\R^m): \| \theta\|_{L^2}^2 \leq \frac1\lambda J(\theta_0) \}$. Hence, from Proposition \ref{prop:lip_grad} it follows that the gradient field $\nabla_\theta J$ is Lipschitz (and bounded) on $\Gamma$. Hence, using a classical result on ODE in Banach spaces (see, e.g., \cite[Theorem 5.1.1]{ladas1972differential}), it follows that the initial value problem \eqref{eq:grad_flow} admits a unique small-time solution $\omega\mapsto \theta(\omega)$ of class $C^1$ defined for $\omega\in[-\delta,\delta]$, with $\delta>0$.
Moreover, we observe that
\begin{equation*}
    \frac{d}{d\omega} J(\theta(\omega)) =
    \langle \nabla_\theta J(\theta(\omega)), \dot \theta(\omega)) \rangle
    = -\| \nabla_\theta J(\theta(\omega)) \|_{L^2} \leq 0,
\end{equation*}
and this implies that $\theta(\omega)\in \Gamma$ for every $\omega\in [0,\delta]$. Hence, it is possible to recursively extend the solution to every interval of the form $[0, M]$, with $M>0$.  
\end{proof}

We observe that, under the current working assumptions, we cannot provide any convergence result for the gradient flow trajectories. This is not surprising since, when the regularization parameter $\lambda>0$ is small, it is not even possible to prove that the functional $J$ admits minimizers. Indeed, the argument presented in Remark \ref{rmk:exist_minim} requires the regularization parameter $\lambda$ to be sufficiently large.

We conclude the discussion with an observation on a possible discretization of \eqref{eq:grad_flow}. If we fix a sufficiently small parameter $\tau > 0$, given an initial guess $\theta_0$, we can consider the sequence of controls $(\theta^\tau_k)_{k\geq0}\subset L^2([0,T],\R^m)$ defined through the Minimizing Movement Scheme:
\begin{equation}\label{eq:mms}
\theta^\tau_0 = \theta_0, \qquad
\theta^\tau_{k+1} \in \arg\min_\theta\left[ J(\theta) + \frac1{2\tau} \|\theta-\theta^\tau_k\|^2_{L^2}\right]
\quad \mbox{for every } k\geq0.
\end{equation}

\begin{rmk}\label{rmk:nec_cond_MMS}
We observe that the minimization problems in \eqref{eq:mms} are well-posed as soon as the functionals $\theta \mapsto J^\tau_{\theta_k}(\theta):= J(\theta) + \frac1{2\tau} \|\theta-\theta^\tau_k\|^2_{L^2}$ are strictly convex on the bounded sublevel set $K_{\theta_0} := \{ \theta: J(\theta)\leq J(\theta^\tau_0) \}$, for every $k\geq0$. Hence, the parameter $\tau>0$ can be calibrated by means of the estimates provided by Corollary \ref{cor:Semiconvexity}, considering the bounded set $K_{\theta_0}$.  
Then, using and inductive argument, it follows that, for every $k\geq0$, the functional $J^\tau_{\theta_k}:L^2([0,T],\R^m)\to\R$ admits a unique global minimizer $\theta^\tau_{k+1}$. 
Also for $J^\tau_{\theta_k}$ we can derive the necessary conditions for optimality satisfied by $\theta_{k+1}$, that are analogous to the ones formulated in \eqref{eq:nec_opt_cond}, and that descend as well from the identity $\nabla_\theta J^\tau_{\theta_k}(\theta^\tau_{k+1})=0$: 
\begin{equation}\label{eq:nec_opt_mms}
    \left\{
    \begin{aligned}
      &\partial_t \mu_t(x,y) + \nabla_x\cdot \big (\mathcal F(t,x ,\theta^\tau_{k+1}(t))\mu_t(x,y) \big)=0, & \hspace{-2.5cm} \mu_t|_{t=0}(x,y)=\mu_0(x,y),\\
      &\partial_t p_t(x,y) =
      -p_t(x,y)\cdot \nabla_x 
      \MF(t,\Phi_{(0,t)}^{\theta^\tau_{k+1}}(x),\theta^\tau_{k+1}(t)),
      & \hspace{-2.5cm}
      p_t|_{t=T}(x,y) = \nabla_x \ell (\Phi_{(0,T)}^{\theta^\tau_{k+1}}(x),y), \\
      &\theta^\tau_{k+1}(t) = -\frac1{1+ 2\lambda\tau} \left(
      \theta^\tau_{k}(t) -\tau 
      \int_{\R^{2d}} \nabla_\theta \MF ^ \top \big( t , \Phi^{\theta^\tau_{k+1}}_{(0,t)}(x),{\theta^\tau_{k+1}}(t) \big) \cdot p^{\top}_t(x,y) \,d\mu_0(x,y)\right). &
    \end{aligned}
    \right.
\end{equation}
Finally, we observe that the mapping $\Lambda^\tau:L^2([0,T],\R^m )\to L^2([0,T],\R^m )$ defined for a.e. $t\in [0,T]$ as 
\begin{equation}\label{eq:Lambda_fixed}
    \Lambda_{\theta^\tau_{k}}^\tau(\theta)[t] :=   -\frac1{1+ 2\lambda\tau} \left(
      \theta^\tau_{k}(t) -\tau 
      \int_{\R^{2d}} \nabla_\theta \MF ^ \top \big( t , \Phi^{\theta}_{(0,t)}(x),{\theta}(t) \big) \cdot p^{\top}_t(x,y) \,d\mu_0(x,y)
      \right)
\end{equation}
is a contraction on $K_{\theta_0}$ as soon as 
\begin{equation*}
    \frac\tau{1+ 2\lambda\tau} \mathrm{Lip}
    \left(\nabla_\theta J_\ell|_{K_{\theta_0}}
    \right)
    <1.
\end{equation*}
\end{rmk}

For every $\tau>0$ such that the sequence $(\theta_k^\tau)_{k\geq0}$ is defined, we denote with $\tilde \theta^\tau:[0,+\infty)\to L^2([0,T],\R^m)$ the piecewise affine interpolation obtained as
\begin{equation} \label{eq:aff_int_mms}
   \tilde  \theta^\tau(\omega) = 
    \theta_k^\tau + \frac{\theta^\tau_{k+1} - \theta^\tau_{k}}{\tau} (\omega - k\tau)
    \quad \mbox{for } \omega\in [k\tau, (k+1)\tau].
\end{equation}
We finally report a classical result concerning the convergence of the piecewise affine interpolation $\tilde \theta^\tau$ to the gradient flow trajectory solving \eqref{eq:grad_flow}.   

\begin{proposition}\label{prop:convergence_mms} Under the same assumptions and notations as in Lemma \ref{lem:grad_flow}, let us consider an initial point $\theta_0\in L^2([0,T],\R^m)$ and a sequence $(\tau_j)_{j\in \mathbb N}$ such that $\tau_j\to 0$ as $j\to \infty$, and let $(\tilde \theta^{\tau_j} )_{j\in \mathbb N}$ be the sequence of piecewise affine curves defined by \eqref{eq:aff_int_mms}.
Then, for every $\Omega>0$, there exists a subsequence $(\tilde \theta^{\tau_{j_k}} )_{k\in \mathbb N}$ converging uniformly on the interval $[0,\Omega]$ to the solution of \eqref{eq:grad_flow} starting from $\theta_0$.
\end{proposition}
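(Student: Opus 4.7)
The plan is to view the minimizing movement scheme as an implicit Euler discretization of the gradient flow equation \eqref{eq:grad_flow} and to exploit the local Lipschitz continuity of $\nabla_\theta J$ provided by Proposition~\ref{prop:lip_grad} through a Grönwall-type argument. Since the limit ODE has a unique solution (Lemma~\ref{lem:grad_flow}), it will actually suffice to show uniform convergence of the whole family $(\tilde\theta^{\tau_j})_{j\in\mathbb{N}}$, from which any subsequence inherits the same limit.

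I would begin with the standard a priori estimates. Testing the minimality of $\theta^\tau_{k+1}$ in \eqref{eq:mms} against the competitor $\theta^\tau_k$ yields the discrete energy-dissipation inequality
\[
J(\theta^\tau_{k+1}) + \frac{1}{2\tau}\|\theta^\tau_{k+1} - \theta^\tau_k\|_{L^2}^2 \leq J(\theta^\tau_k),
\]
so every iterate remains in the bounded sublevel set $K_{\theta_0} = \{\theta : J(\theta) \leq J(\theta_0)\}$, which is bounded in $L^2$ since $\lambda > 0$. Summing telescopically gives $\sum_{k \geq 0} \|\theta^\tau_{k+1} - \theta^\tau_k\|_{L^2}^2 \leq 2\tau J(\theta_0)$, and in particular the individual estimate $\|\theta^\tau_{k+1} - \theta^\tau_k\|_{L^2} \leq \sqrt{2\tau J(\theta_0)}$ holds for every $k$. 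Introducing the piecewise-constant interpolation $\bar\theta^\tau(\omega) := \theta^\tau_{k+1}$ for $\omega \in [k\tau, (k+1)\tau)$, I would deduce that $\bar\theta^\tau$ takes values in $K_{\theta_0}$ and that $\sup_{\omega \in [0, \Omega]} \|\bar\theta^\tau(\omega) - \tilde\theta^\tau(\omega)\|_{L^2} \leq \sqrt{2\tau J(\theta_0)}$, which vanishes as $\tau \to 0$.

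Next, using the Euler--Lagrange identity $\nabla_\theta J(\theta^\tau_{k+1}) = -(\theta^\tau_{k+1} - \theta^\tau_k)/\tau$ recorded in Remark~\ref{rmk:nec_cond_MMS}, I would rewrite the affine interpolation \eqref{eq:aff_int_mms} as the integral equation
\[
\tilde\theta^\tau(\omega) = \theta_0 - \int_0^\omega \nabla_\theta J(\bar\theta^\tau(s))\,ds, \qquad \omega \in [0, \Omega],
\]
and observe that the gradient flow trajectory $\omega \mapsto \theta(\omega)$ supplied by Lemma~\ref{lem:grad_flow} satisfies the analogous identity with $\theta(s)$ in place of $\bar\theta^\tau(s)$. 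Since $J$ is non-increasing along the flow as well, $\theta(\omega) \in K_{\theta_0}$ for every $\omega \geq 0$. On the bounded set $K_{\theta_0}$, Proposition~\ref{prop:lip_grad} furnishes a Lipschitz constant $\mathcal{L}$ for $\nabla_\theta J$ that does not depend on $\tau$. Subtracting the two integral equations and using the triangle inequality together with the bound from the previous paragraph,
\[
\|\tilde\theta^\tau(\omega) - \theta(\omega)\|_{L^2} \leq \mathcal{L} \, \Omega \sqrt{2\tau J(\theta_0)} + \mathcal{L} \int_0^\omega \|\tilde\theta^\tau(s) - \theta(s)\|_{L^2}\, ds,
\]
and Grönwall's inequality yields $\sup_{\omega \in [0, \Omega]} \|\tilde\theta^\tau(\omega) - \theta(\omega)\|_{L^2} \leq \mathcal{L}\,\Omega\, e^{\mathcal{L} \Omega} \sqrt{2\tau J(\theta_0)}$, which vanishes as $\tau \to 0$.

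The only technical subtleties of this argument are (i) verifying that all the iterates $\theta^\tau_k$ and the gradient flow solution $\theta$ remain in the common bounded set $K_{\theta_0}$, which is what permits the use of a single $\tau$-independent Lipschitz constant $\mathcal{L}$, and (ii) the use of the Euler--Lagrange identity, which requires $\tau$ sufficiently small so that the functional $J^\tau_{\theta^\tau_k}$ is strictly convex on $K_{\theta_0}$ (cf.~Corollary~\ref{cor:Semiconvexity}); this smallness is already implicit in the statement, since it is needed in order for the sequence $(\theta^\tau_k)$ to be well defined. Once these points are in place, the conclusion is a direct Grönwall estimate, and the extraction of a subsequence in the statement is actually for free, since the full sequence converges uniformly by uniqueness of the gradient flow.
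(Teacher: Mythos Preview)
Your argument is correct. The paper, by contrast, does not give a self-contained proof at all: it simply cites \cite[Proposition~2.3]{santambrogio2017euclidean}, a general convergence result for minimizing movement schemes in the metric setting. Your route is genuinely different and more elementary: because $J$ is $C^1$ with locally Lipschitz gradient on the bounded sublevel set $K_{\theta_0}$ (Corollary~\ref{cor:Semiconvexity}), the scheme \eqref{eq:mms} is precisely an implicit Euler discretization of the ODE \eqref{eq:grad_flow}, and you can then run a direct Gr\"onwall comparison between the discrete integral equation $\tilde\theta^\tau(\omega)=\theta_0-\int_0^\omega\nabla_\theta J(\bar\theta^\tau(s))\,ds$ and its continuous counterpart. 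What the cited metric result buys is generality (it applies under mere lower semicontinuity and $\lambda$-convexity, without any differentiable structure); what your argument buys is transparency and a quantitative rate, namely $\sup_{[0,\Omega]}\|\tilde\theta^\tau-\theta\|_{L^2}=O(\sqrt{\tau})$, together with the observation that the \emph{whole} sequence converges, not just a subsequence. One minor imprecision: the Euler--Lagrange identity $\nabla_\theta J(\theta^\tau_{k+1})=-(\theta^\tau_{k+1}-\theta^\tau_k)/\tau$ only needs differentiability of $J$ and that $\theta^\tau_{k+1}$ is a minimizer; the strict convexity of $J^\tau_{\theta^\tau_k}$ is needed for the scheme to be well defined (unique minimizer), not for the identity itself.
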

\begin{proof}
The proof follows directly from \cite[Proposition 2.3]{santambrogio2017euclidean}.
\end{proof}

\subsection{Finite particles approximation} \label{subsec:fin_part}
In this section, we study the stability of the mean-field optimal control problem \eqref{eq:mf_functional} with respect to finite-samples distributions. More precisely, assume that we are given samples $\{(X_0^i,Y_0^i)\}_{i=1}^N$ of size $N \geq 1$ independently and identically distributed according to $\mu_0 \in \mathcal{P}_c(\R^{2d})$, and consider the empirical loss minimization problem
\begin{equation}
\label{eq:cost_finite_sample}
\inf_{\theta \in L^2([0,T];\R^m)} J^N(\theta) := 
\left\{
\begin{aligned}
& \frac{1}{N} \sum_{i=1}^N \ell \big(X^i(T),Y^i(T) \big) + \lambda \int_0^T|\theta(t)|^2\, dt \\
& \,\; \textnormal{s.t.} \,\,
\left\{
\begin{aligned}
& \dot X^i(t) = \MF(t,X^i(t),\theta(t) ), \hspace{1.5cm} \dot Y^i(t) =0, \\ 
& (X^i(t),Y^i(t))\big |_{t=0} = (X_0^i,Y_0^i), ~~ i \in \{1,\dots,N\}.
\end{aligned}
\right.
\end{aligned}
\right.
\end{equation}

By introducing the empirical measure $\mu_0^N \in \mathcal{P}_c^N(\R^{2d})$, defined as 
\begin{equation*}
\mu_0^N := \frac{1}{N} \sum_{i=1}^N \delta_{(X_0^i,Y_0^i)}, 
\end{equation*}
the cost function in \eqref{eq:cost_finite_sample} can be rewritten as
\begin{equation}\label{eq:J_finite}
    J^N(\theta) = \int_{\RR^{2d}} \ell(\Phi_{(0,T)}^\theta(x),y) \,d\mu_0^N(x,y)  + \lambda \|\theta\|_{L^2}^2
\end{equation}
for every $\theta\in L^2([0,T],\R^m)$, and the empirical loss minimization problem in \eqref{eq:cost_finite_sample} can be recast as a mean-field optimal control problem with initial datum $\mu^N_0$. 
In this section we are interested in studying the asymptotic behavior of the functional $J^N$ as $N$ tends to infinity. More precisely, we consider a sequence of probability measures $(\mu_0^N)_{N\geq1}$ such that $\mu_0^N$ charges uniformly $N$ points, and such that
\begin{equation*}
W_1(\mu_0^N,\mu_0) ~\underset{N \to +\infty}{\longrightarrow}~ 0. 
\end{equation*}
Then, in Proposition \ref{prop:unif_conv_J} we study the uniform convergence of $J^N$ and of $\nabla_\theta J^N$ to $J$ and $\nabla_\theta J^N$, respectively, where $J:L^2([0,T],\R^m)\to \R$ is the functional defined in \eqref{eq:mf_functional} and corresponding to the limiting measure $\mu_0$.
Moreover, in Theorem \ref{thm:gen_error}, assuming the existence of a region where the functionals $J^N$ are uniformly strongly convex, we provide an estimate of the so called \textit{generalization error} in terms of the distance $W_1(\mu_0^N,\mu_0)$.

\begin{proposition}[Uniform convergence of $J^N$ and $\nabla_\theta J^N$]\label{prop:unif_conv_J}
Let us consider a probability measure $\mu_0 \in \mathcal{P}_c(\R^{2d})$ and a sequence $(\mu_0^N)_{N\geq1}$ such that $\mu_0^N\in \mathcal{P}^N_c(\R^{2d})$ for every $N\geq1$. Let us further assume that $W_1(\mu_0^N,\mu_0)\to 0$ as $N\to\infty$, and that there exists $R>0$ such that $\supp(\mu_0), \supp(\mu^N_0) \subset B_R(0)$ for every $N\geq1$.
Given $T>0$, let $\MF :[0,T]\times \R^d \times \R^m \to \R^d$ and $\ell:\R^d \times \R^d \to \R$ satisfy, respectively, Assumptions \ref{ass:block1}-\ref{ass:block2} and Assumption \ref{ass:ell}, and let $J,J^N:L^2([0,T],\R^m)\to\R$ be the cost functionals defined in \eqref{eq:mf_functional} and \eqref{eq:cost_finite_sample}, respectively.
Then, for every bounded subset $\Gamma\subset L^2([0,T],\R^m)$, we have that
\begin{equation} \label{eq:unif_conv_J}
    \lim_{N\to\infty}\,\, \sup_{\theta\in \Gamma}
    |J^N(\theta)-J(\theta)| =0
\end{equation}
and 
\begin{equation} \label{eq:unif_conv_grad}
    \lim_{N\to\infty}\,\, \sup_{\theta\in \Gamma} \| \nabla_\theta J^N(\theta)  - \nabla_\theta J(\theta) \|_{L^2}=0,
\end{equation}
where $J$ was introduced in \eqref{eq:mf_functional}, and $J^N$ is defined as in \eqref{eq:J_finite}.
\end{proposition}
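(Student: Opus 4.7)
The plan is to exploit the Kantorovich dual representation of $W_1$ recalled in \eqref{eq:Kanto}, reducing both convergences to Lipschitz-in-$(x,y)$ estimates of suitable integrands tested against the signed measure $\mu_0^N-\mu_0$. First I would observe that the Tikhonov term $\lambda\|\theta\|_{L^2}^2$ is the same in $J$ and $J^N$, hence it cancels in $J^N(\theta)-J(\theta)$ and in $\nabla_\theta J^N(\theta)-\nabla_\theta J(\theta)$; so it suffices to work with the terminal-cost part $J_\ell$ and its gradient \eqref{eq:grad_J_ell}, replacing $\mu_0$ by $\mu_0^N$.

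For \eqref{eq:unif_conv_J} I would use Proposition \ref{prop:exist_uniq_pde} to write $\mu_T^{\theta,N}=\BPhi^\theta_{(0,T)\#}\mu_0^N$ and analogously for $\mu_T^\theta$, so that $J^N(\theta)-J(\theta)=\int \ell(\Phi^\theta_{(0,T)}(x),y)\,d(\mu_0^N-\mu_0)(x,y)$. Assumption \ref{ass:ell} makes $\ell$ locally Lipschitz, and Proposition \ref{prop:flow_basics} provides a Lipschitz bound for $\Phi^\theta_{(0,T)}$ on $B_R(0)$ that is uniform in $\theta\in\Gamma$; combining the two one obtains a constant $C(R,\Gamma)$ such that the integrand is $C(R,\Gamma)$-Lipschitz on $B_R(0)\times B_R(0)$ uniformly in $\theta\in\Gamma$. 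Kantorovich duality \eqref{eq:Kanto} then yields $|J^N(\theta)-J(\theta)|\le C(R,\Gamma)\,W_1(\mu_0^N,\mu_0)$, and letting $N\to\infty$ gives \eqref{eq:unif_conv_J}.

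For \eqref{eq:unif_conv_grad} I would use the closed-form expression \eqref{eq:grad_J_ell} and, for each $t\in[0,T]$ and $\theta\in\Gamma$, define the integrand
\[
\Psi_\theta(t,x,y) := \nabla_\theta\MF^\top\bigl(t,\Phi^\theta_{(0,t)}(x),\theta(t)\bigr)\cdot\mathcal{R}^\theta_{(t,T)}(x)^\top\cdot\nabla_x\ell^\top\bigl(\Phi^\theta_{(0,T)}(x),y\bigr),
\]
so that $\nabla_\theta J^N(\theta)[t]-\nabla_\theta J(\theta)[t]=\int\Psi_\theta(t,\cdot,\cdot)\,d(\mu_0^N-\mu_0)$. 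The plan is then to apply Kantorovich duality pointwise in $t$ and integrate over $[0,T]$. This requires, for each $t$, an estimate of the Lipschitz constant of $(x,y)\mapsto\Psi_\theta(t,x,y)$ on $B_R(0)\times B_R(0)$. Splitting by the product rule, each factor contributes a sup-norm and a Lipschitz constant that I would control by combining: Assumption \ref{ass:block2}$(v),(vii)$ with Proposition \ref{prop:flow_basics} for the first factor; the appendix result on $\mathcal{R}^\theta$ (Proposition \ref{prop:resolvent}) for the second; Assumption \ref{ass:ell} with Proposition \ref{prop:flow_basics} for the third. Each ingredient grows at most polynomially in $|\theta(t)|$, producing a bound of the form $\mathrm{Lip}(\Psi_\theta(t,\cdot,\cdot)|_{B_R\times B_R})\le g(t,\theta)$ where $g(\cdot,\theta)$ is square-integrable in $t$ with $L^2$-norm uniform over $\theta\in\Gamma$. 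Kantorovich duality for each $t$ and Fubini then give
\[
\|\nabla_\theta J^N(\theta)-\nabla_\theta J(\theta)\|_{L^2}^2\le W_1(\mu_0^N,\mu_0)^2\int_0^T g(t,\theta)^2\,dt,
\]
yielding \eqref{eq:unif_conv_grad} upon letting $N\to\infty$.

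The hard part is the bookkeeping for the gradient estimate: three nested $\theta$-dependent factors must be multiplied, and one has to keep track of the joint growth in $|\theta(t)|$ carefully enough that the product Lipschitz constant remains square-integrable with a bound uniform on bounded subsets of $L^2$. All pointwise ingredients are already available from Proposition \ref{prop:flow_basics}, Assumption \ref{ass:block2}, Assumption \ref{ass:ell} and the appendix bounds on $\mathcal{R}^\theta$, so the computation is essentially a refinement of the one carried out in the proof of Proposition \ref{prop:lip_grad}, with the difference of controls $\theta_1-\theta_2$ replaced by the difference of measures $\mu_0^N-\mu_0$ and the resulting use of $W_1$ in place of the $L^2$-Lipschitz estimates of the flow.
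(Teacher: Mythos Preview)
Your proposal is correct and follows essentially the same route as the paper: cancel the Tikhonov term, use the push-forward representation and the local Lipschitz regularity of $\ell\circ\Phi^\theta_{(0,T)}$ together with \eqref{eq:Kanto} for \eqref{eq:unif_conv_J}, and for \eqref{eq:unif_conv_grad} bound the Lipschitz constant in $(x,y)$ of the integrand in \eqref{eq:grad_J_ell} factor-by-factor via Assumption~\ref{ass:block2}$(v),(vii)$, Proposition~\ref{prop:resolvent} and Assumption~\ref{ass:ell}, obtaining a pointwise-in-$t$ bound of the form $\tilde L_\rho(1+|\theta(t)|)W_1(\mu_0^N,\mu_0)$ that is then squared and integrated. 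The paper carries out exactly this three-factor product estimate (see \eqref{eq:comp_nabla_F}--\eqref{eq:comp_nabla_ell}), so your plan and the paper's proof coincide.
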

\begin{proof}
Since we have that $J(\theta) = J_\ell(\theta) + \lambda\|\theta \|_{L^2}^2$ and $J^N(\theta) = J_\ell^N(\theta) + \lambda\|\theta \|_{L^2}^2$, it is sufficient to prove \eqref{eq:unif_conv_J}-\eqref{eq:unif_conv_grad} for $J_\ell$ and $J_\ell^N$, where we set
\begin{equation*}
    J^N_{\ell}(\theta) := \int_{\RR^{2d}} \ell(\Phi_{(0,T)}^\theta(x),y) \, d\mu_0^N(x,y) 
\end{equation*}
for every $\theta\in L^2$ and for every $N\geq1$.
We first observe that, for every $\theta \in L^2([0,T],\R^{m})$ such that $\| \theta \|_{L^2}\leq \rho$, from Proposition \ref{prop:flow_pde_basics} it follows that 
$\supp(\mu_0), \supp(\mu^N_0) \subset B_{\bar R}(0)$, for some $\bar R>0$. Then, denoting with $t\mapsto \mu_t^N$ and $t\mapsto \mu_t$ the solutions of the continuity equation \eqref{eq:pde} driven by the control $\theta$ and with initial datum, respectively, $\mu_0^N$ and $\mu_0$, we compute
\begin{equation} \label{eq:unif_conv_J_comput}
\begin{split}
    |J^N_\ell(\theta)-J_\ell(\theta)| &= \left |\int_{\R^{2d}} \ell(\Phi_{(0,T)}^\theta(x),y) \,\big(d\mu_0^N-d\mu_0\big)(x,y) \right|
    = 
    \left |\int_{\R^{2d}} \ell(x,y) \,\big(d\mu_T^N-d\mu_T\big)(x,y) \right|
    \\
    & \leq \bar{L}_1  \bar{L}_2  W_1(\mu_0^N,\mu_0),
\end{split}
\end{equation}
where we have used \eqref{eq:flow_extend} and Proposition \ref{prop:exist_uniq_pde} in the second identity, and we have indicated with $\bar{L}_1$ the Lipschitz constant of $\ell$ on $B_{\bar R}(0)$, while $\bar{L}_2$ descends from the continuous dependence of solutions of \eqref{eq:pde} on the initial datum (see Proposition \ref{prop:flow_pde_basics}). We insist on the fact that both $\bar{L}_1,\bar{L}_2$ depend on $\rho$, i.e., the upper bound on the $L^2$-norm of the controls.\\
We now address the uniform converge of $\nabla_\theta J^N_\ell$ to $\nabla_\theta J_\ell$ on bounded sets of $L^2$. As before, let us consider an admissible control $\theta$ such that $\| \theta\|_{L^2}\leq \rho$.
Hence, using the representation provided in \eqref{eq:grad_J_ell}, for a.e. $t \in [0,T]$ we have:
\begin{equation}\label{eq:lip_cont_grad}
\begin{split}
     \big |\nabla_\theta J^N_\ell(\theta)[t] - & \nabla_\theta J_\ell(\theta)[t] \big | = \\ & \left |\int_{\R^{2d}} \nabla_{\theta} \MF ^ \top \big( t , \Phi^{\theta_0}_{(0,t)}(x),\theta_0(t) \big) \cdot \mathcal{R}^{\theta_0}_{(t,T)}(x)^{ \top}  \cdot \nabla_x \ell ^{ \top} \big(\Phi^{\theta_0}_{(0,T)}(x), y \big) \, \big (d\mu^N_0-d\mu_0 \big)(x,y) \right|,
\end{split}
\end{equation}
In order to prove uniform convergence in $L^2$ norm, we have to show that the integrand is Lipschitz-continuous in $(x,y)$ for a.e. $t\in[0,T]$, where the Lipschitz constant has to be $L^2$-integrable as a function of the $t$ variable.
First of all, combining Assumption \ref{ass:block2}$-(v)$  and Lemma \ref{lem:lip_flow_init},  we can prove that there exists constants $C_1,\bar L_3>0$ (depending on $\rho$) such that 
\begin{equation}\label{eq:comp_nabla_F}
    \begin{split}
        | \nabla_\theta \MF(t, \Phi^\theta_{(0,t)}(x), \theta(t))| \leq& C_1 (1+ |\theta(t)|),\\
        |\nabla_\theta \MF(t, \Phi^\theta_{(0,t)}(x_1), \theta(t)) - \nabla_\theta \MF(t, \Phi^\theta_{(0,t)}(x_2), \theta(t))| \leq &\bar{L}_3  \bar{L}_2  (1+|\theta(t)|)|x_1-x_2|
    \end{split}
\end{equation}
for a.e. $t\in[0,T]$. We recall that the quantity $\bar L_2>0$ (that already appeared in \eqref{eq:unif_conv_J_comput}) represents the Lipschitz constant of the flow $\Phi_{(0,t)}$ with respect to the initial datum.
Moreover, from Proposition \ref{prop:resolvent}, it descends that
\begin{equation}\label{eq:comp_R}
    \begin{split}
        |\mathcal{R}^\theta_{(t,T)}(x)| \leq & C_2, \\
        |\mathcal{R}^\theta_{(t,T)}(x_1)-\mathcal{R}^\theta_{(t,T)}(x_2)|\leq &\bar{L}_4  |x_1-x_2| 
    \end{split}
\end{equation}
for every $t\in[0,T]$, where the constants $C_2, \bar{L}_4$ both depend on $\rho$.
Finally, owing to Assumption \ref{ass:ell} and Proposition \ref{prop:flow_basics}, we deduce
\begin{equation}\label{eq:comp_nabla_ell}
    \begin{split}
        | \nabla_x \ell(\Phi^\theta_{(0,T)}(x),y)| \leq & C_3,\\
        |\nabla_x \ell(\Phi^\theta_{(0,T)}(x_1),y_1)-\nabla_x \ell(\Phi^\theta_{(0,T)}(x_2),y_2)| \leq & \bar{L}_5(\bar{L}_2 |x_1-x_2|+|y_1-y_2|)
    \end{split}
\end{equation}
for every $x,y \in B_{R}(0)$,
where the constants $C_3, \bar{L}_2$ and the Lipschitz constant $\bar{L}_5$ of $\nabla_x \ell$ depend, once again, on $\rho$.
Combining \eqref{eq:comp_nabla_F}, \eqref{eq:comp_R} and \eqref{eq:comp_nabla_ell}, we obtain that there exists a constant $\tilde L_\rho>0$ such that
 \begin{equation*}
     |\nabla_\theta J^N[t]-\nabla_\theta J[t]| \leq \tilde L_\rho (1 + |\theta(t)|) W_1(\mu_0^N, \mu_0),
 \end{equation*}
for a.e. $t\in[0,T]$. 
Observing that the right-hand side is $L^2$-integrable in $t$, the previous inequality yields
 \begin{equation*}
     \|\nabla_\theta J^N-\nabla_\theta J\|_{L^2} \leq \tilde L_\rho (1+\rho) W_1(\mu_0^N, \mu_0),
 \end{equation*}
and this concludes the proof.
\end{proof}

In the next result we provide an estimate of the \textit{generalization error} in terms of the distance $W_1(\mu_0^N,\mu_0)$. In this case, the important assumption is that there exists a sequence $(\theta^{*,N})_{N\geq1}$ of local minimizers of the functionals $(J^N)_{N\geq1}$, and that it is contained in a region where $(J^N)_{N\geq1}$ are uniformly strongly convex.

\begin{thm}\label{thm:gen_error}
Under the same notations and hypotheses as in Proposition \ref{prop:unif_conv_J}, let us further assume that the functional $J$ admits a local minimizer $\theta^*$ and, similarly, that, for every $N\geq1$, $\theta^{*,N}$ is a local minimizer for $J^N$. Moreover, we require that there exists a radius $\rho >0$ such that, for every $N \geq \bar{N}$, $\theta^{*,N} \in B_\rho(\theta^*)$ and the functional $J^N$ is $\eta-$strongly convex in $B_\rho(\theta^*)$, with $\eta>0$. 
Then, there exists a constant $C>0$ such that, for every $N\geq \bar N$,  we have
\begin{equation}\label{eq:gen_error}
\bigg|\int_{\mathbb R^{2d}} \ell(x,y) \, d\mu^{\theta^{*,N}}_T(x,y)- \int_{\mathbb R^{2d}} \ell(x,y) \,d\mu^{\theta^*}_T(x,y) \bigg| \leq C \left( W_1(\mu_0^N,\mu_0)
+ \frac1{\sqrt{\eta}}
\sqrt{W_1(\mu_0^N,\mu_0)}
\right).
\end{equation}
\end{thm}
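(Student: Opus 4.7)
The strategy hinges on two ingredients: the uniform convergence of $J^N$ and $\nabla_\theta J^N$ from Proposition~\ref{prop:unif_conv_J}, and the strong convexity of $J^N$ on $B_\rho(\theta^*)$. Set $e_N := W_1(\mu_0^N, \mu_0)$. I would organize the proof in three steps.

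\textbf{Step 1 (transfer of strong convexity and comparison of minima).} Since $J$ and $J^N$ differ only by the empirical versus continuous loss term (the regularizer being identical), the uniform convergence $\nabla_\theta J^N \to \nabla_\theta J$ on bounded sets from Proposition~\ref{prop:unif_conv_J} allows one to pass to the limit in the strong-monotonicity inequality $\langle \nabla_\theta J^N(\theta_1) - \nabla_\theta J^N(\theta_2), \theta_1-\theta_2 \rangle \geq \eta \|\theta_1 - \theta_2\|_{L^2}^2$ valid for $\theta_1,\theta_2 \in B_\rho(\theta^*)$. This shows $J$ itself is $\eta$-strongly convex on $B_\rho(\theta^*)$, so the local minimizer $\theta^*$ is in fact the unique minimizer of $J$ over the convex set $B_\rho(\theta^*)$. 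Since $\theta^{*,N} \in B_\rho(\theta^*)$ by hypothesis, we obtain $J(\theta^*) \leq J(\theta^{*,N})$ for every $N \geq \bar N$.

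\textbf{Step 2 (quantitative distance bound).} Because $\theta^{*,N}$ is a local minimizer of the differentiable $J^N$, the stationary condition gives $\nabla_\theta J^N(\theta^{*,N}) = 0$. Applying the $\eta$-strong convexity of $J^N$ between $\theta^{*,N}$ and $\theta^*$ yields
\begin{equation*}
\tfrac{\eta}{2}\|\theta^* - \theta^{*,N}\|_{L^2}^2 \leq J^N(\theta^*) - J^N(\theta^{*,N}).
\end{equation*}
I then split the right-hand side as
\begin{equation*}
J^N(\theta^*) - J^N(\theta^{*,N}) = \bigl[J^N(\theta^*) - J(\theta^*)\bigr] + \bigl[J(\theta^*) - J(\theta^{*,N})\bigr] + \bigl[J(\theta^{*,N}) - J^N(\theta^{*,N})\bigr].
\end{equation*}
The middle bracket is non-positive by Step~1, while the two outer brackets are each bounded by $C\,e_N$ using Proposition~\ref{prop:unif_conv_J} on the bounded set $B_\rho(\theta^*)$. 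This gives $\|\theta^* - \theta^{*,N}\|_{L^2} \leq C'\sqrt{e_N/\eta}$, which is where the $\sqrt{\eta}$ denominator in \eqref{eq:gen_error} originates.

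\textbf{Step 3 (generalization error).} Denoting by $J_\ell, J_\ell^N$ the reduced losses (i.e.\ $J,J^N$ without the $\lambda\|\theta\|_{L^2}^2$ term), I decompose
\begin{equation*}
\bigl|J_\ell(\theta^{*,N}) - J_\ell(\theta^*)\bigr| \leq \bigl|J_\ell(\theta^{*,N}) - J_\ell^N(\theta^{*,N})\bigr| + \bigl|J_\ell^N(\theta^{*,N}) - J_\ell^N(\theta^*)\bigr| + \bigl|J_\ell^N(\theta^*) - J_\ell(\theta^*)\bigr|.
\end{equation*}
The first and third terms are bounded by $C e_N$ via Proposition~\ref{prop:unif_conv_J}. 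For the central term, Proposition~\ref{prop:lip_grad} gives Lipschitz-continuity of $\nabla_\theta J_\ell^N$ on bounded sets and hence Lipschitz-continuity of $J_\ell^N$ itself, so $|J_\ell^N(\theta^{*,N}) - J_\ell^N(\theta^*)| \leq L\,\|\theta^{*,N} - \theta^*\|_{L^2}$. Plugging in the bound from Step~2 delivers exactly \eqref{eq:gen_error}.

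The main obstacle is Step~1: the hypothesis only provides local minimality of $\theta^*$ for $J$, yet the argument needs a two-sided comparison $J(\theta^*) \leq J(\theta^{*,N})$ over the full ball $B_\rho(\theta^*)$. The key insight is that strong convexity of $J^N$ is a \emph{uniform} property (same $\eta$ for all $N$ large), so uniform gradient convergence transfers it to $J$, and any local minimum of a strongly convex function on a convex set is in fact the unique global minimum there. Once this is established, Steps 2--3 are essentially bookkeeping with the estimates already collected in Propositions~\ref{prop:lip_grad} and \ref{prop:unif_conv_J}.
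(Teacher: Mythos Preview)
Your proof is correct and follows essentially the same strategy as the paper: transfer of strong convexity from $J^N$ to $J$ via uniform gradient convergence (Step~1), a strong-convexity estimate giving $\|\theta^{*,N}-\theta^*\|_{L^2}\leq C\sqrt{W_1(\mu_0^N,\mu_0)/\eta}$ (Step~2), and then Lipschitz continuity of the loss in $\theta$ (Step~3). The only organizational difference is in Step~2, where the paper sums the two inequalities $J^N(\theta^*)-J^N(\theta^{*,N})\geq\tfrac{\eta}{2}\|\theta^{*,N}-\theta^*\|_{L^2}^2$ and $J(\theta^{*,N})-J(\theta^*)\geq\tfrac{\eta}{2}\|\theta^{*,N}-\theta^*\|_{L^2}^2$ rather than using your three-term split with the sign of the middle bracket; the resulting bound is identical up to constants.
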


\begin{proof}
According to our assumptions, the control $\theta^{*,N}\in B_\rho(\theta^*)$ is a local minimizer for $J^N$, and, being $J^N$ strongly convex on $B_\rho(\theta^*)$ for $N\geq\bar N$, we deduce that $\{ \theta^{*,N} \} =\arg\min_{B_\rho(\theta^*)}J^N$. 
Furthermore, from the $\eta$-strong convexity of $J^N$, it follows that for every $\theta_1, \theta_2 \in B_\rho(\theta^*)$, it holds
\begin{equation*}
    \langle \nabla_\theta J^N(\theta_1) - \nabla_\theta J^N(\theta_2), \, \theta_1-\theta_2 \rangle \geq \eta ||\theta_1-\theta_2||^2_{L^2}.
\end{equation*}
According to Proposition \ref{prop:unif_conv_J}, we can pass to the limit in the latter and deduce that
\begin{equation*}
    \langle \nabla_\theta J(\theta_1) -\nabla_\theta J(\theta_2), \, \theta_1-\theta_2 \rangle \geq \eta ||\theta_1-\theta_2||^2_{L^2}
\end{equation*}
for every $\theta_1,\theta_2\in B_\rho(\theta^*)$.
Hence, $J$ is $\eta$-strongly convex in $B_\rho(\theta^*)$ as well, and that $\{ \theta^{*} \} =\arg\min_{B_\rho(\theta^*)}J$.
Therefore, from the $\eta$-strong convexity of $J^N$ and $J$, we obtain
\begin{align*}
    J^N(\theta^*)-J^N(\theta^{*,N}) &\geq \frac{\eta}{2}||\theta^{*,N}- \theta^*||_{L^2}^2 \\
    J(\theta^{*,N})-J^N(\theta^*) &\geq \frac{\eta}{2}||\theta^{*,N}- \theta^*||_{L^2}^2.
\end{align*}
Summing the last two inequalities, we deduce that
\begin{equation}\label{eq:comp_theta_bound}
    \eta ||\theta^{*,N}- \theta^*||_{L^2}^2 \leq \left (J^N(\theta^*)-J(\theta^*) \right) + \left (J^N(\theta^{*,N})-J(\theta^{*,N}) \right)  \leq 2C_1  W_1(\mu_0^N, \mu_0),
\end{equation}
where the second inequality follows from the local uniform convergence of Proposition \ref{prop:unif_conv_J}. We are now in position to derive a bound on the generalization error:
\begin{equation}\label{eq:comp_gen_err}
\begin{split}
\left| \int_{\mathbb R^{2d}} \ell(x,y) \, \big (d\mu^{\theta^{*,N}}_T - d\mu^{\theta^*}_T \big )(x,y) \right| & = \left| \int_{\mathbb R^{2d}} \ell(\Phi_{(0,T)}^{\theta^{*,N}}(x),y) \, d\mu^{N}_0(x,y) - \int_{\mathbb R^{2d}} \ell(\Phi_{(0,T)}^{\theta^*}(x),y) \,d\mu_0(x,y) \right|\\
& \leq \int_{\R^{2d}} \left|\ell(\Phi^{\theta^{*,N}}_{(0,T)}(x),y) -\ell(\Phi^{\theta^{*}}_{(0,T)}(x),y)\right| \, d\mu_0^N(x,y)\\
& \quad + \left|\int_{\R^{2d}} \ell(\Phi^{\theta^{*}}_{(0,T)}(x),y) \big( d\mu_0^N(x,y)-d\mu_0(x,y) \big ) \right|\\
& \leq \bar{L}  \sup_{x \in \supp(\mu_0^N)} \left|\Phi^{\theta^{*,N}}_{(0,T)}(x)- \Phi^{\theta^*}_{(0,T)}(x)\right|   + \bar{L}_R  W_1(\mu_0^N, \mu_0),
\end{split}
\end{equation}
where $\bar{L}$ and $\bar{L}_R$ are constants coming from Assumption \ref{ass:ell} and Proposition \ref{prop:flow_basics}. Then, we combine Proposition~\ref{prop:flow_basics} with the estimate in \eqref{eq:comp_theta_bound}, in order to obtain
\begin{equation*}
    \sup_{x \in \supp(\mu_0^N)} \left|\Phi^{\theta^{*,N}}_{(0,T)}(x)- \Phi^{\theta^*}_{(0,T)}(x)\right| \leq C_2 \| \theta^{*,N}-\theta^*\|_{L^2} \leq C_2  \sqrt{\frac{2C_1}{\eta} W_1(\mu_0^N,\mu_0)}.
\end{equation*}
Finally, from the last inequality and \eqref{eq:comp_gen_err}, we deduce \eqref{eq:gen_error}.
\end{proof}

\begin{rmk}
Since the functional $J:L^2([0,T],\R^m)\to \R$ defined in \eqref{eq:mf_functional} is continuous (and, in particular, lower semi-continuous) with respect to the strong topology of $L^2$, the locally uniform convergence of the functionals $J^N$ to $J$ (see Proposition~\ref{prop:unif_conv_J}) implies that $J^N$ is $\Gamma$-converging to $J$ with respect to the strong topology of $L^2$. However, this fact is of little use, since the functionals $J,J^N$ are not strongly coercive.
On the other hand, if we equip $L^2$ with the weak topology, in general the functional $J$ is not lower semi-continuous. In our framework, the only circumstance where one can hope for $\Gamma$-convergence with respect to the weak topology corresponds to the highly-regularized scenario, i.e., when the parameter $\lambda>0$ is sufficiently large. Therefore, in the situations of practical interest when $\lambda$ is small, we cannot rely on this tool, and the crucial aspect is that the dynamics \eqref{eq:ode} is nonlinear with respect to the control variable. Indeed, in the case of affine-control systems considered in \cite{scag2023optimal}, it is possible to establish $\Gamma$-convergence results in the $L^2$-weak topology (see \cite{scag2023deep} for an application to diffeomorphisms approximation). 
Finally, we report that in \cite{thorpe2023deep}, in order to obtain the $L^2$-strong equi-coercivity of the functionals, the authors introduced in the cost the $H^1$-seminorm of the controls. 
\end{rmk}

\subsection{Convex regime and previous result}
In order to conclude our mean-field analysis, we now compare our results with the ones obtained in the similar framework of \cite{bonnet2023measure}, where the regularization parameter $\lambda$ was assumed to be \textit{sufficiently large}, leading to a convex regime in the sublevel sets (see Remark \ref{rmk:exist_minim}). We recall below the main results presented in \cite{bonnet2023measure}.

\begin{thm}\label{thm:main_old_resutls} Given $T, R, R_T>0 $, and an initial datum $\mu_0 \in \mathcal{P}_c(\R^{2d})$ with $\supp(\mu_0) \subset B_R(0)$, let us consider a terminal condition $\psi_T:\R^d\times\R^d\to\R$ such that $\supp(\psi_T)\subset B_{R_T}(0)$ and $\psi_T(x,y) = \ell(x,y)$ $\forall x,y \in B_R(0)$. Let $\MF$ satisfy \cite[Assumptions 1-2]{bonnet2023measure} and $\ell \in C^2(\R^d \times \R^d,\R)$. Assume further that $\lambda>0$ is large enough. Then, there exists a triple $(\mu^*, \theta^*, \psi^*) \in \mathcal{C}([0, T ], \mathcal{P}_c(\R^{2d})) \times Lip([0, T ], \R^m) \times \mathcal{C}^1([0, T ], \mathcal{C}_c^2(\R^{2d}))$ solution of 
\begin{equation}\label{eq:old_pmp}
    \left\{
    \begin{aligned}
    &\partial_t\mu_t^*(x,y) + \nabla_x\cdot (\mathcal F(t,x,\theta^*(t))\mu_t^*(x,y))=0, & \mu_t^*|_{t=0}(x,y)=\mu_0(x,y), \\
    &\partial_t\psi_t^*(x,y) + \nabla_x\psi_t^*(x,y)\cdot \mathcal F(t,x,\theta^\ast(t))=0, & \psi_t^*|_{t=T}(x,y)=\ell(x,y) , \\
    &\theta^{*\top}(t) = -\frac{1}{2\lambda}\int_{\RR^{2d}}\nabla_x\psi_t^*(x,y)\cdot \nabla_\theta \mathcal F(t,x,\theta^*(t)) \,d\mu_t^*(x,y), &
    \end{aligned}
    \right.
\end{equation}
where $\psi^* \in \mathcal{C}^1([0,T],\mathcal{C}^2_c(\R^{2d}))$ is in characteristic form. Moreover, the control solution $\theta^*$ is unique in a ball $\Gamma_C \subset L^2([0,T], \R^m)$ and continuously dependent on the initial datum $\mu_0$.
\end{thm}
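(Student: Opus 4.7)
The plan is to derive this theorem as a consequence of the mean-field analysis developed earlier in this section, rather than redoing the infinite-dimensional Lagrange-multiplier argument of \cite{bonnet2023measure}. The key observation is that the backward adjoint $\psi^*$ appearing in \eqref{eq:old_pmp} and the row-vector $p^*$ from \eqref{eq:nec_opt_cond} are linked through the method of characteristics combined with the resolvent identity \eqref{eq:different_flow}, so that the two apparently different Pontryagin systems are in fact the same object written in Eulerian vs.\ Lagrangian coordinates.

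First, I would invoke Remark~\ref{rmk:exist_minim}: for $\lambda$ sufficiently large, the functional $J$ defined in \eqref{eq:mf_functional} is strictly convex on a ball $B_{\bar\lambda}$ that contains the sublevel set $\{J\le J_\ell(0)\}$, hence admits a unique global minimizer $\theta^*\in L^2([0,T],\R^m)$. The identity $\nabla_\theta J(\theta^*)=0$, combined with Lemma~\ref{lem:FinalCostReg} and with the backward ODE \eqref{eq:ode_p} for $p^*$, yields the triple $(\mu^*,p^*,\theta^*)$ satisfying the first and third equations of \eqref{eq:nec_opt_cond}. The uniqueness in a ball $\Gamma_C$ is an immediate byproduct of strong convexity, and the continuous dependence on $\mu_0$ follows by a standard contraction estimate on the fixed-point map \eqref{eq:Lambda_fixed}, reasoning as in the stability step of Theorem~\ref{thm:gen_error}.

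Next, I would translate $p^*$ into $\psi^*$. Set $\psi^*_t(x,y):=\ell(\Phi^{\theta^*}_{(t,T)}(x),y)\,\chi(x,y)$, where $\chi$ is a $C^\infty_c$ cutoff equal to $1$ on a neighborhood of the reachable set emanating from $\supp(\mu_0)\subset B_R(0)$ and supported in $B_{R_T}(0)$; the existence of such a cutoff, and the fact that the truncation does not interfere with the dynamics on the support of $\mu^*_t$, rely on Proposition~\ref{prop:flow_basics} and on the hypothesis $\psi_T\equiv\ell$ on $B_R(0)$. This $\psi^*$ is by construction the classical characteristic solution of the backward transport equation in \eqref{eq:old_pmp} with terminal datum $\ell$. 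Using the chain rule together with \eqref{eq:different_flow}, the crucial pointwise identity
\begin{equation*}
\nabla_x\psi^*_t\bigl(\Phi^{\theta^*}_{(0,t)}(x_0),y\bigr)
= \nabla_x\ell\bigl(\Phi^{\theta^*}_{(0,T)}(x_0),y\bigr)\cdot \mathcal R^{\theta^*}_{(t,T)}(x_0)
= p^*_t(x_0,y)
\end{equation*}
holds for every $(x_0,y)\in\supp(\mu_0)$. Pushing forward by $\BPhi^{\theta^*}_{(0,t)}$ and exploiting \eqref{eq:push_forw}, this transforms the feedback law in \eqref{eq:nec_opt_cond} into the one in \eqref{eq:old_pmp}, establishing the equivalence of the two systems and, in particular, that $(\mu^*,\theta^*,\psi^*)$ solves \eqref{eq:old_pmp}.

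The main technical obstacle is upgrading the regularity to match the statement, namely $\psi^*\in C^1([0,T],C^2_c(\R^{2d}))$ and $\theta^*\in \mathrm{Lip}([0,T],\R^m)$. The former requires differentiating the characteristic representation twice in space, which propagates through the ODE for $\mathcal R^{\theta^*}$ and demands second-order smoothness of $\MF$ in $x$ beyond Assumption~\ref{ass:block2}; this is precisely where the stronger hypotheses imported from \cite{bonnet2023measure} are used. The Lipschitz-in-$t$ regularity of $\theta^*$ is then obtained by reading off the feedback identity in \eqref{eq:nec_opt_cond}: once one knows that $t\mapsto \Phi^{\theta^*}_{(0,t)}(x)$, $t\mapsto\mathcal R^{\theta^*}_{(t,T)}(x)$ and $t\mapsto \nabla_\theta\MF$ are Lipschitz uniformly in $(x,y)\in\supp(\mu_0)$, the right-hand side inherits the same regularity, which closes the argument through a bootstrap.
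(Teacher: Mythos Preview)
The paper does not give its own proof of this theorem: it is explicitly introduced as ``We recall below the main results presented in \cite{bonnet2023measure}'', so Theorem~\ref{thm:main_old_resutls} is stated without proof and attributed entirely to the cited reference. What the paper \emph{does} prove is Proposition~\ref{prop:equivalence_pmp}, which shows that, under the hypotheses of Theorem~\ref{thm:main_old_resutls}, the condition $\nabla_\theta J(\theta^*)=0$ from \eqref{eq:nec_opt_cond} is equivalent to the third line of \eqref{eq:old_pmp}. The argument there is precisely your characteristic/resolvent identity $\nabla_x\psi^*_t(\Phi^{\theta^*}_{(0,t)}(x),y)=\nabla_x\ell(\Phi^{\theta^*}_{(0,T)}(x),y)\cdot\mathcal R^{\theta^*}_{(t,T)}(x)$, combined with the push-forward change of variables.

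Your proposal is therefore a genuinely different route: instead of quoting \cite{bonnet2023measure}, you reconstruct the existence of the triple from Remark~\ref{rmk:exist_minim} (existence and uniqueness of the minimizer for large $\lambda$), the first-order condition \eqref{eq:nec_opt_cond}, and then the Eulerian--Lagrangian translation that the paper only uses afterwards in Proposition~\ref{prop:equivalence_pmp}. This is a legitimate and more self-contained derivation of the optimality system. The part where you and the paper overlap exactly is the $p^*\leftrightarrow\nabla_x\psi^*$ identity; the paper uses it to show equivalence \emph{a posteriori}, you use it to \emph{construct} $\psi^*$.

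The honest gap you already flag is the regularity: $\psi^*\in C^1([0,T],C^2_c(\R^{2d}))$ and $\theta^*\in\mathrm{Lip}([0,T],\R^m)$ are not consequences of Assumptions~\ref{ass:block1}--\ref{ass:block2} alone, and the paper does not claim otherwise---it imports them via the stronger hypotheses of \cite{bonnet2023measure}. Your bootstrap sketch for $\theta^*\in\mathrm{Lip}$ is circular as written (Lipschitz-in-$t$ of $\Phi^{\theta^*}_{(0,t)}$ via Lemma~\ref{lem:lip_flow_time} needs $\theta^*\in L^\infty$, which is what you are trying to prove), so that step would need the continuity-in-$t$ assumption on $\MF$ from \cite{bonnet2023measure} to get off the ground. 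Apart from this, your strategy is sound and matches the paper's equivalence argument at its core.
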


We observe that the condition on $\lambda >0$ to be large enough is crucial to obtain local convexity of the cost functional and, consequently, existence and uniqueness of the solution. However, in the present paper we have not done assumptions on the magnitude of $\lambda$, hence, as it was already noticed in Remark~\ref{rmk:exist_minim}, we might end up in a non-convex regime. Nevertheless, in Proposition~\ref{prop:equivalence_pmp} we show that, in the case of $\lambda$ sufficiently large, the previous approach and the current one are ``equivalent".

\begin{proposition}\label{prop:equivalence_pmp}
Under the same hypotheses as in Theorem~\ref{thm:main_old_resutls}, let $J:L^2([0,T])\to \R$ be the functional defined in \eqref{eq:mf_functional}. Then, $\theta^*$ satisfies \eqref{eq:old_pmp} if and only if it is a critical point for $J$.
\end{proposition}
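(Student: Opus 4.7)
The plan is to show that the PMP system \eqref{eq:old_pmp} and the stationarity condition $\nabla_\theta J(\theta^*)=0$ provided by \eqref{eq:nec_opt_cond} are two different encodings of the same object, the difference being only whether the adjoint information is transported \emph{with} the flow (as in $\psi^*$) or attached to the initial particles (as in $p^*$). Throughout I will use the key identity \eqref{eq:different_flow}, namely $\mathcal R^{\theta}_{(t,T)}(x) = \nabla_x \Phi^{\theta}_{(t,T)}\big|_{\Phi^{\theta}_{(0,t)}(x)}$.

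First I would exploit the fact that $\psi^*$ is in characteristic form: the backward transport equation in \eqref{eq:old_pmp} with terminal datum $\ell$ forces
\[
\psi_t^*(x,y) \;=\; \ell\bigl(\Phi^{\theta^*}_{(t,T)}(x),y\bigr)
\qquad \forall\, (t,x,y)\in [0,T]\times\R^{2d}.
\]
Differentiating in $x$ and using the chain rule together with \eqref{eq:different_flow}, one obtains
\[
\nabla_x \psi_t^*\bigl(\Phi^{\theta^*}_{(0,t)}(x_0),y\bigr)
\;=\;
\nabla_x\ell\bigl(\Phi^{\theta^*}_{(0,T)}(x_0),y\bigr)\cdot
\mathcal R^{\theta^*}_{(t,T)}(x_0) \;=\; p^*_t(x_0,y),
\]
where $p^*$ is the adjoint of \eqref{eq:def_p}--\eqref{eq:ode_p} (the verification that $t\mapsto p^*_t$ satisfies \eqref{eq:ode_p} follows from \eqref{eq:back_evol_R}, while the terminal condition is immediate).

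Next I would transport the control characterization of \eqref{eq:old_pmp} onto the initial measure. Since $\mu_t^{*} = \BPhi^{\theta^*}_{(0,t)\#}\mu_0$ by Proposition~\ref{prop:exist_uniq_pde}, the change-of-variables formula \eqref{eq:push_forw} yields
\[
\int_{\R^{2d}} \nabla_x\psi_t^*(x,y)\cdot \nabla_\theta \MF(t,x,\theta^*(t))\, d\mu_t^*(x,y)
= \int_{\R^{2d}} p^*_t(x,y)\cdot \nabla_\theta \MF\bigl(t,\Phi^{\theta^*}_{(0,t)}(x),\theta^*(t)\bigr)\, d\mu_0(x,y),
\]
after substituting the identification established in the previous step. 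Plugging this into the third line of \eqref{eq:old_pmp} gives exactly the stationarity condition in \eqref{eq:nec_opt_cond}, which, by \eqref{eq:diff_Jell_with_p}, is equivalent to $\nabla_\theta J(\theta^*)=0$. This proves the ``only if'' direction.

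For the converse, starting from a critical point $\theta^*$ of $J$, one defines $\mu^*$ as the unique solution of the continuity equation driven by $\theta^*$ (Proposition~\ref{prop:exist_uniq_pde}) and $\psi^*_t(x,y):=\ell(\Phi^{\theta^*}_{(t,T)}(x),y)$. By construction $\psi^*$ solves the backward transport equation with terminal datum $\ell$ in characteristic form, and the previous identifications turn $\nabla_\theta J(\theta^*)=0$ into the feedback relation of \eqref{eq:old_pmp}. The regularity of the triple $(\mu^*,\theta^*,\psi^*)$ required by Theorem~\ref{thm:main_old_resutls} is guaranteed by Proposition~\ref{prop:flow_basics}, Proposition~\ref{prop:flow_pde_basics}, and Assumption~\ref{ass:block2}. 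The only delicate point, which I expect to be the main (though mild) obstacle, is the careful bookkeeping of transposes and of the chain rule identification $\nabla_x\psi_t^*\circ\Phi^{\theta^*}_{(0,t)} = p^*_t$: once this is set up, the equivalence is a direct pushforward computation.
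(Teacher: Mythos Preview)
Your proposal is correct and follows essentially the same approach as the paper: both arguments hinge on the characteristic-form identity $\psi_t^*(x,y)=\ell(\Phi^{\theta^*}_{(t,T)}(x),y)$, differentiate it using \eqref{eq:different_flow} to identify $\nabla_x\psi_t^*\circ\Phi^{\theta^*}_{(0,t)}$ with $p^*_t$, and then apply the pushforward \eqref{eq:push_forw} to match the two control characterizations. You are slightly more explicit than the paper in treating the two directions of the equivalence separately and in remarking on regularity for the converse, but the core computation is the same.
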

\begin{proof}
According to Lemma~\eqref{lem:FinalCostReg}, the gradient of the functional $J$ at $\theta \in L^2([0,T],\R^m)$ is defined for a.e. $t \in [0,T]$ as
\begin{equation*}
    \nabla_\theta J(\theta)[t] = \int_{\R^{2d}} \nabla_{\theta} \MF ^ \top \big( t , \Phi^{\theta}_{(0,t)}(x),\theta(t) \big) \cdot \mathcal{R}^{\theta}_{(t,T)}(x)^{ \top}  \cdot \nabla_x \ell ^{ \top} \big(\Phi^{\theta}_{(0,T)}(x), y \big) \,d\mu_0(x,y) + 2\lambda \theta(t).
\end{equation*}
Hence, if we set the previous expression equal to zero, we obtain the characterization of the critical point 
\begin{equation}\label{eq:comp_th_new}
    \theta(t) = -\frac{1}{2\lambda} \int_{\R^{2d}} \nabla_{\theta} \MF ^ \top \big( t , \Phi^{\theta}_{(0,t)}(x),\theta(t) \big) \cdot \mathcal{R}^{\theta}_{(t,T)}(x)^{ \top}  \cdot \nabla_x \ell ^{ \top} \big(\Phi^{\theta}_{(0,T)}(x), y \big) \,d\mu_0(x,y)
\end{equation}
for a.e. $t\in[0,T]$.
On the other hand, according to Theorem \ref{thm:main_old_resutls}, the optimal $\theta$ satisfies for a.e. $t \in [0,T]$ the following
\begin{equation}\label{eq:comp_th_old}
    \begin{split}
    \theta(t) &= -\frac{1}{2\lambda}\int_{\R^{2d}} \big( \nabla_x\psi_t(x,y)
    \cdot
    \nabla_\theta \MF(t,x, \theta(t)) \big )^{ \top} \,  d\mu_t(x,y) \\
    &= -\frac{1}{2\lambda} \int_{\R^{2d}} \nabla_\theta \MF^{ \top}(t,\Phi^\theta_{(0,t)}(x), \theta(t)) \cdot \nabla_x\psi^{ \top}_t(\Phi^\theta_{(0,t)}(x),y) \,d\mu_0(x,y).
    \end{split}
\end{equation}
Hence, to conclude that $\nabla_\theta J=0$ is equivalent to condition stated in Theorem \ref{thm:main_old_resutls}, we are left to show that
\begin{equation}\label{eq:comp_equiv}
    \mathcal{R}^{\theta}_{(t,T)}(x)^{\top}
    \cdot 
    \nabla_x \ell ^\top \big(\Phi^{\theta}_{(0,T)}(x), y \big) = \nabla_x \psi^\top_t( \Phi^\theta_{(0,t)}(x),y),
\end{equation}
where the operator $\mathcal{R}^{\theta}_{(t,T)}(x)$ is defined as the solution of \eqref{eq:ode_R}.
First of all, we recall that $(t,x,y)\mapsto\psi(t, \Phi^\theta_{(0,t)}(x),y)$ is defined as the characteristic solution of the second equation in \eqref{eq:old_pmp} and, as such, it satisfies
\begin{equation*}
\psi_t(x,y) = \ell (\Phi_{(t,T)}^\theta(x),y),
\end{equation*}
for every $t\in[0,T]$ and for every $x,y\in B_{R_T}(0)$. By taking the gradient with respect to $x$, we obtain that 
\begin{equation*}
    \nabla_x \psi_t(x,y) = \nabla_x \ell(\Phi_{(t,T)}^\theta(x),y) )\cdot \nabla_x    \Phi_{(t,T)}^\theta \big|_x,
\end{equation*}
for all $x,y\in B_{R_T}(0)$. Hence, using \eqref{eq:different_flow}, we deduce that
\begin{equation*}
\begin{split}
     \nabla_x \psi_t(\Phi_{(0,t)}^\theta(x),y)  & =
     \nabla_x \ell(\Phi_{(t,T)}^\theta\circ \Phi_{(0,t)}^\theta(x),y) )\cdot \nabla_x    \Phi_{(t,T)}^\theta \big|_{\Phi_{(0,t)}^\theta(x)}
      = \nabla_x \ell(\Phi_{(0,T)}^\theta(x),y) )\cdot \mathcal{R}^\theta_{(t,T)}(x)
\end{split}
\end{equation*}
which proves \eqref{eq:comp_equiv}.
\end{proof}


\section{Algorithm} \label{sec:Alg}
In this section, we present our training procedure, which is derived from the necessary optimality conditions related to the minimizing movement scheme (see \eqref{eq:nec_opt_mms}). 
Since the mean-field optimal control problem as presented in \eqref{eq:mf_functional} is numerically intractable (especially in high-dimension), in the practice we always consider the functional corresponding to the finite particles approximation (see \eqref{eq:cost_finite_sample}).
For its resolution, we employ an algorithm belonging to the family of shooting methods, which consists in the forward evolution of the trajectories, in the backward evolution of the adjoint variables, and in the update of the controls.
Variants of this method have already been employed in different works, e.g. \cite{li2017maximum, haber2017stable, benning2019deep, bonnet2023measure, chernousko1982method}, with the name of \textit{method of successive approximations}, and they have been proven to be an alternative way of performing the training of NeurODEs for a range of tasks, including high-dimensional problems.\\
In our case, we start with a random guess for the control parameter $\theta_0\in L^2([0,T],\R^m)$. Subsequently, we solve the necessary optimality conditions specified in equation \eqref{eq:nec_opt_mms} for a suitable $\tau>0$ to obtain an updated control parameter $\theta_1$. More precisely, since the last identity in \eqref{eq:nec_opt_mms} has the form $\theta_1=\Lambda_{\theta_0}^{\tau}(\theta_1)$, the computation of $\theta_1$ is performed via fixed-point iterations of the mapping $\Lambda_{\theta_0}^\tau$, which is defined as in \eqref{eq:Lambda_fixed}. In this regard, we recall that $\Lambda_{\theta_0}^\tau$ is a contraction if $\tau$ is small enough. The scheme that we implemented is presented is Algorithm~\ref{alg:shooting}.

\begin{algorithm}[ht!]
\scriptsize
\caption{Shooting method}\label{alg:shooting}
\KwData{$\{(X^i_0, Y^i_0) \}^N_{i=0}$ data with labels\; $\MF$ controlled vector field \; $\theta^0$ initial guess for controls\; $N_{iter}$ number of shooting iterations\; $dt$ time-discretization of the interval $[0,T]$\;
$\lambda$ regularization parameter\;
$\tau$ memory parameter\;
}
\KwResult{$\theta^{N_{iter}}$}
$N_t \gets \frac{T}{dt}$\;
\For{$k = 1, \ldots, N_{iter}$}{
\For{$j = 1, \ldots, N_t$}{
    \For{$i = 1, \ldots, N$}{
        $X^i(t_{j+1}) \gets X^i(t_j) + dt\, \MF(t_j, X^i(t_j), \theta^k(t_j))$ \Comment*[r]{Solve forward }
    }
}
\For{$i=1, \ldots, N$}{ 
    $P^i(N_t) \gets -\nabla_x \ell(X^i(N_t), Y^i(0))$ \Comment*[r]{Update the co-state at the final time}
}
\For{$j = N_t, \ldots, 1$}{
    \For{$i = 1, \ldots, N$}{
        $P^i(t_{j}) \gets P^i(t_{j+1}) + dt\, P^i(t_{j+1})\cdot \nabla_x\MF(t_{j+1}, X^i(t_{j+1}), \theta^k(t_{j+1}))$ \Comment*[r]{Solve backward}
    }
}
\For{$j = 1, \ldots, N_t$}{
    $I^{\theta^{k+1}} \gets \sum_{i=0}^N \nabla_\theta \MF(t_j, X^i(t_j), \theta^{k+1}(t_j))^{\top} \cdot  P^i(t_j)^{\top}$ \Comment*[r]{Approximate the integral \eqref{eq:diff_Jell_with_p}}
    $\theta^{k+1}(t_j) \gets \Lambda \big [- \frac{1}{1 +2\lambda\tau}( \theta^k(t_j) - \frac{\tau}{N} I^{\theta^{k+1}}) \big ]$ \Comment*[r]{Update the control via fixed-point $\Lambda$}
    }
}
\end{algorithm}

\begin{rmk} \label{rmk:high_reg_alg}
It is interesting to observe that, in the \textit{highly-regularized} regime considered in \cite{bonnet2023measure}, the authors managed to obtain a contractive map directly from the necessary conditions for optimality, and they did not need to consider the minimizing movements scheme. This is rather natural since, when the parameter $\lambda>0$ that tunes the $L^2$-penalization is large enough, the functional associated to the optimal control problem is strongly convex in the sublevel set corresponding to the control $\theta\equiv 0$, as discussed in Remark \ref{rmk:exist_minim}.
However, as reported in \cite{bonnet2023measure}, determining the appropriate value for $\lambda$ in each application can be challenging. On the other hand, from the practitioners' perspective, dealing with high regularization is not always desirable, since the machine learning task that the system should learn is encoded in the final-time cost. The authors highlighted the complexity involved in selecting a regularization parameter that is large enough to achieve contractivity, while ensuring that the resulting controls are not excessively small (due to high regularization) and of little use.
\end{rmk}
These considerations motivated us to consider a scenario where the regularization parameter does not need to be set sufficiently large.
From a numerical perspective, the parameter $\tau$ in Equation \eqref{eq:nec_opt_mms} (coming from the minimizing movement scheme) plays the role of the \textit{learning rate}, and it provides the lacking amount of convexity, addressing the stability issues related to the resolution of optimal control problems in non-convex regime.
These kinds of instabilities were already known in the Soviet literature of numerical optimal control (see the review paper \cite{chernousko1982method}), and various solutions have been proposed to address them.
For example, in \cite{sakawa1980global} the authors proposed an iterative method based on the Maximum Principle and on an augmented Hamiltonian, with an approach that is somehow reminiscent of minimizing movements.
More recently, in the framework of NeurODEs, in \cite{li2017maximum} it was proposed another stabilization strategy, which is different from ours since it enforces similarity between the evolution of state and co-state variables after the control update. 
Implicitly, the approach of \cite{li2017maximum} 
leads to a penalization of significant changes in the controls. On the other hand, in our approach this penalization is more explicit, and it is enforced via the memory term of the minimizing movement scheme. To the best of our knowledge, this is the first instance where a regularization based on the minimizing movement scheme is employed for training NeurODEs.

\begin{rmk}
Although we formulate and analyze theoretically our problem within the mean-field framework, it is not advantageous to numerically solve the forward equation as a partial differential equation. In \cite{bonnet2023measure}, various numerical methods for solving PDEs were employed and compared. However, these methods encounter limitations when applied to high-dimensional data, which is often the case in Machine Learning scenarios. Therefore, in this study, we employ a particle method to solve both the forward partial differential equation and the backward dynamics. This particle-based approach involves reformulating the PDE as a system of ordinary differential equations in which particles represent mathematical collocation points that discretize the continuous fields. By employing this particle method, we address the challenges associated with high-dimensional data, enabling efficient numerical solutions for the forward and backward dynamics.
\end{rmk}
To conclude this section, we briefly present the forward and the backward systems that are solved during the execution of the method. For the sake of simplicity, we will focus on the case of an encoder. The objective is to minimize the following function:
\begin{equation}\label{eq:loss_th_zero}
    J(\theta) = \frac{1}{N} \sum_{i=1}^{N} \ell(X^i_{\A_r}(T), Y^i(0)) + \frac{\lambda}{2} \norm{\theta}^2_2,
\end{equation}
where $\A_r$ denotes the active indices in the bottleneck, i.e. at $t_r=T$, of the state-vector  $X^i(T)$. The latter denotes the encoded output at time $T$ for the $i$-th particle, while $Y^i(0)$ represents the corresponding target at time $0$ (which we recall is the same at time $T$, being $\dot Y^i \equiv 0$ for every $i=1,\ldots,N$). For each $i$-th particle and every $t$ such that $t_j \leq t \leq t_{j+1}$, the forward dynamics can be described as follows:
\begin{equation}\label{eq:fwd_const}
    \begin{cases}
        \dot{X}^i_{\I_j}(t) = 0, \\
        \dot{X}^i_{\A_j}(t) = \MG_j\big(t, X^i_{\A_j}(t), \theta(t) \big),
    \end{cases}
\end{equation}
subject to the initial condition $X^i(0) = X_{\A_0}^i(0) = X_0^i \in \RR^d$. In the same interval $t_j \leq t \leq t_{j+1}$, the backward dynamics reads
\begin{equation}\label{eq:bkw_const}
    \begin{cases}
    \dot{P}^i_{\I_j}(t) = 0, \\
    \dot{P}^i_{\A_j}(t) = -P^i_{\A_j}(t) \cdot \nabla_{x_{\A_j}} \MG_j \big(t, X^i_{\A_j}(t), \theta(t) \big),
    \end{cases}
\end{equation}
where the final co-state is 
\begin{equation*}
    P^i(T) = 
\begin{cases}
 - \partial_k \ell(X^i_{\A_r}(T),Y^i(0)), & \text{if } k \in \A_r,\\
 0, & \text{if } k \notin \A_r.
\end{cases}
\end{equation*}
We notice that, for $t_j \leq t \leq t_{j+1}$ and every $i \in \{0, \ldots, N \}$, we have
\begin{equation}\label{eq:sparse_F}
    \MF(t,X^i(t),\theta(t)) = \MF(t, (X^i_{\A_r},X^i_{\I_j})(t), \theta(t)) =
\begin{pmatrix} \MG_j(t, X^i_{\A_j}(t),\theta(t))\\  0 \end{pmatrix},
\end{equation}
and, consequently , we deduce that
\begin{equation}\label{eq:sparse_grad}
    \nabla_x \MF(t,X^i(t),\theta(t)) = \begin{pmatrix} \nabla_{x_{\A_j}} \MG_j(t,X^i_{\A_j}(t),\theta(t)) & 0 \\ 0 & 0 \end{pmatrix},
\end{equation}
where the null blocks are due to the fact that, for $t_j \leq t \leq t_{j+1}$, $\nabla_{x}\MF_k (t,x,\theta)=0$ if $k\in \I_j$, and $\nabla_{x_{\I_j}}\MG_j (t,x,\theta)=0$.
In the case of an Autoencoder, the structure of the forward and backward dynamics is analogous.

\begin{rmk} \label{rmk:overwriting}
From the calculations reported above it is evident that the matrices and the vectors involved in our forward and backward dynamics are quite sparse (see \eqref{eq:sparse_grad} and \eqref{eq:sparse_F}), and that the state and co-state variables contain components that are constant in many sub-intervals (see \eqref{eq:fwd_const} and \eqref{eq:bkw_const}). Hence, in the practical implementation, especially when dealing with an Autoencoder, we do not actually need to double the original state variables and to introduce the shadow ones, but we can simply overwrite those values and, in this way, we obtain a more memory-efficient code. A similar argument holds as well for the co-state variables.
Moreover, we expect the control variable $\theta$ to have several null components during the evolution. 
This relates to Remark \ref{rmk:dim_theta} and descends from the fact that, even though in our model $\theta \in \R^m$ for every $ t \in [0,T]$, in the internal sub-intervals $[t_j,t_{j+1}]$ only few of its components are influencing the dynamics.
Hence, owing to the $L^2$-squared regularization on $\theta$, it results that, if in an interval $[t_j,t_{j+1}]$ a certain component of $\theta$ is not affecting the velocity, then it is convenient to keep it null.   
\end{rmk}


\section{Numerical Experiments}\label{sec:num_exp}
In this section, we present a series of numerical examples to illustrate the practical application of our approach. We consider datasets of varying dimensions, ranging from low-dimensional data to a more typical Machine Learning dataset such as MNIST. Additionally, we provide justifications and insights into some of the choices made in our theoretical analysis. 
For instance, we examine the process of choosing the components to be deactivated during the modeling phase, and we investigate whether this hand-picked selection 
can lead to any issues or incorrect results. 
In this regard, in our first experiment concerning a classification task, we demonstrate that this a priori choice does not pose any problem, as the network effectively learns to separate the dataset into two classes before accurately classifying them.
Furthermore, as we already pointed out, we have extended some of the assumptions from \cite{bonnet2023measure} to accommodate the use of a smooth approximation of the ReLU function. This extension is not merely a theoretical exercise, since in our second numerical example we show how valuable it is to leverage unbounded activation functions.
While both of these examples involve low-dimensional data and may not be representative of typical tasks for an Autoencoder architecture, we address this limitation in our third experiment by performing a reconstruction task on the MNIST dataset. Lastly, we present noteworthy results obtained from analyzing the performance on MNIST, highlighting specific behaviors that warrant further investigation in future research.\\

\noindent
The layers of the networks that we employ in all our experiments have the form:
\begin{equation*}
    \R^d\ni X = \left(X_{\A_j}, X_{\I_j} \right)^\top \mapsto
    \phi_n^{W,b}(X) =
    \left(X_{\A_j}, X_{\I_j} \right)^\top
    + h \Big( \sigma\left(
    W_{\A_j}\cdot
    X_{\A_j} +
     b_{\A_j}\right), 0 \Big)^\top,
\end{equation*}
where $\A_j,\I_j$ are, respectively, the sets of active and inactive components at the layer $n$, $b_{\A_j}$ are the components of $b\in \R^d$ belonging to $\A_j$, while $W_{\A_j}$ is the square sub-matrix of $W\in \R^{d\times d}$ corresponding to the active components. Finally, the activation function $\sigma$ will be specified case by case.

\subsection*{Bidimensional Classification}
In our initial experiment, we concentrate on a bidimensional classification task that has been extensively described in \cite{bonnet2023measure}. Although this task deviates from the typical application of Autoencoders, where the objective is data reconstruction instead of classification, we believe it gives valuable insights on how our model works.
The objective is to classify particles sampled from a standard Gaussian distribution in $\R^2$ based on the sign of their first component. Given an initial data point $x_0 \in \R^2$, denoted by $x_0[i]$ with $i=1,2$ representing its $i-$th component, we assign a positive label $+1$ to it if $x_0[1] > 0$, and a negative label $-1$ otherwise. 
To incorporate the labels into the Autoencoder framework, we augment the labels to obtain a positive label $[1,0]$ and a negative one $[-1,0]$. In such a way, we obtain target vectors in $\R^2$, i.e., with the same dimension as the input data-points in the first layer.\\
The considered architecture is an Autoencoder comprising twenty-one layers, corresponding to $T=2$ and $dt=0.05$. The first seven layers maintain a constant active dimension equal to $2$, followed by seven layers of active dimension $1$. Finally, the last seven layers, representing the prototype of a decoder, have again constant active dimension $2$, restoring the initial one. 
A sketch of the architecture is presented on the right side of Figure \ref{fig:classification_natural}.
\begin{figure}[ht!]
\begin{minipage}{0.3\linewidth} 
\includegraphics[scale = 0.35]{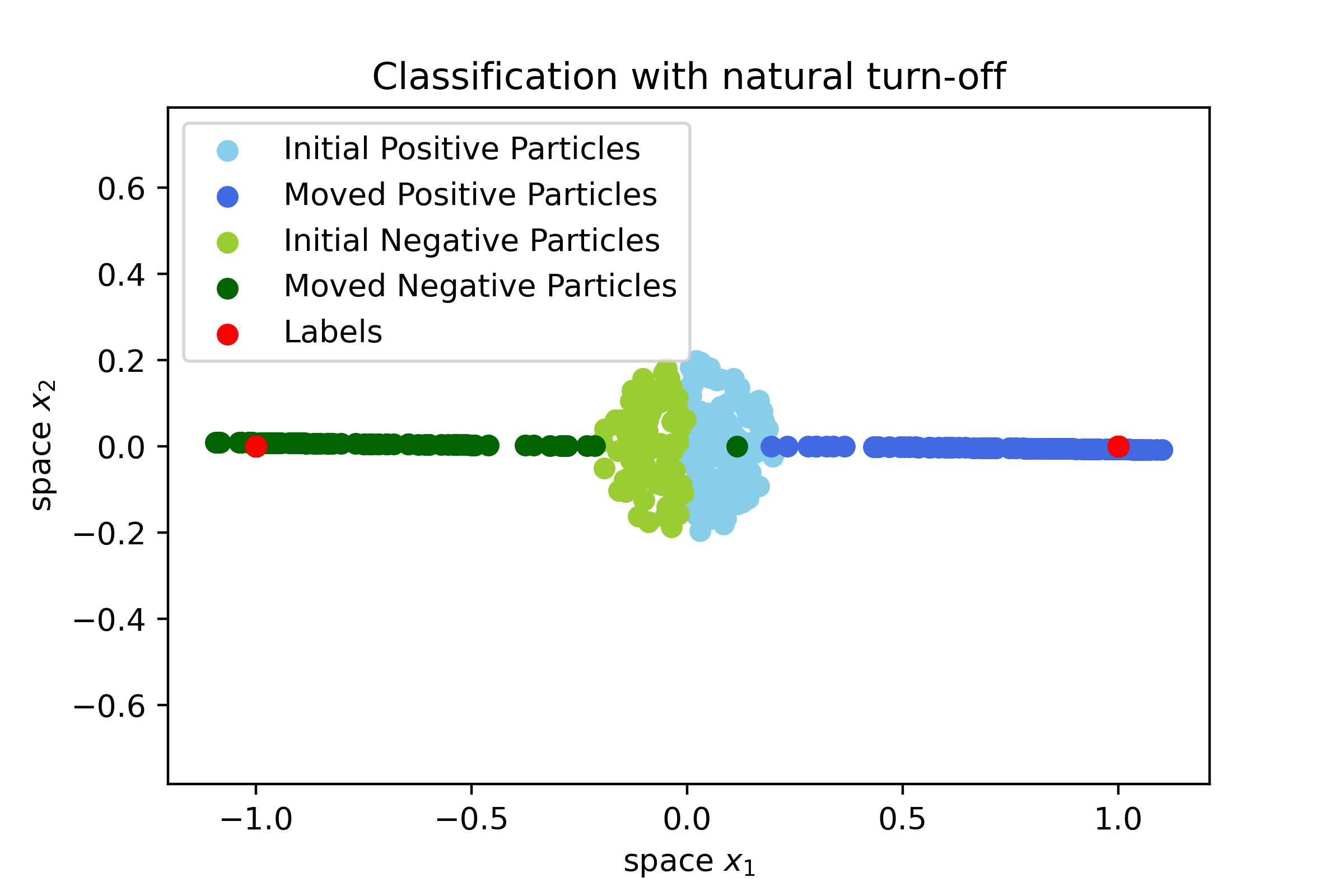}
\end{minipage}
\begin{minipage}{0.3\linewidth} 
\includegraphics[scale = 0.25]{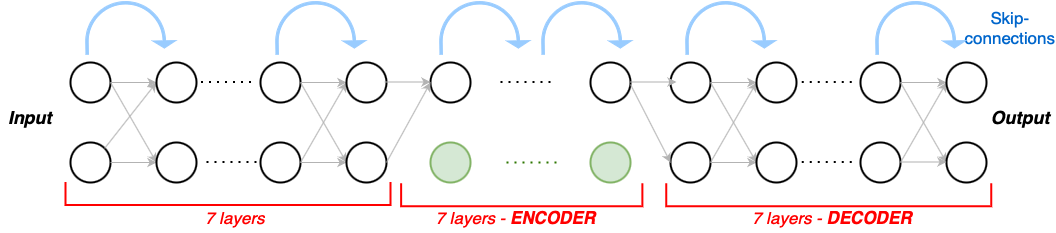}
\end{minipage}
\caption{Left: Classification task performed when the turned off component is the natural one. \\
Right: sketch of the AutoencODE architecture considered.}
\label{fig:classification_natural}
\end{figure}
We underline that we make use of the observation presented in Remark~\ref{rmk:overwriting} to construct the implemented network, and we report that we employ the hyperbolic tangent as activation function. \\
The next step is to determine which components to deactivate, i.e., we have to choose the sets $\I_j$ for $j=1, \ldots, 2r$: the natural choice is to deactivate the second component, since the information which the classification is based on is contained in the first component (the sign) of the input data-points.
Since we use the memory-saving regime of Remark~\ref{rmk:overwriting}, we observe that, in the decoder, the particles are ``projected" onto the $x$-axis, as their second component is deactivated and set equal to $0$. Then, in the decoding phase, both the components have again the possibility of evolving. This particular case is illustrated on the left side of Figure~\ref{fig:classification_natural}.\\
Now, let us consider a scenario where the network architecture remains the same, but instead of deactivating the second component, we turn-off the first component.
This has the effect of ``projecting" the particles onto the $y$-axis in the encoding phase.
The results are presented in Figure~\ref{fig:classification_unnatural}, where an interesting effect emerges.
\begin{figure}[ht]
\centering
\begin{minipage}[b]{0.3\linewidth} 
\includegraphics[scale = 0.35]{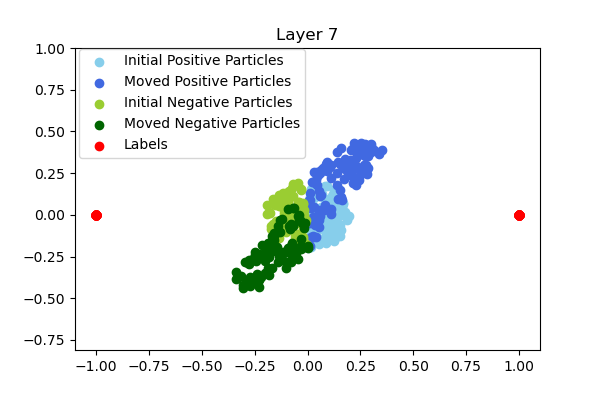}
\end{minipage}
\begin{minipage}[b]{0.3\linewidth} 
\includegraphics[scale = 0.35]{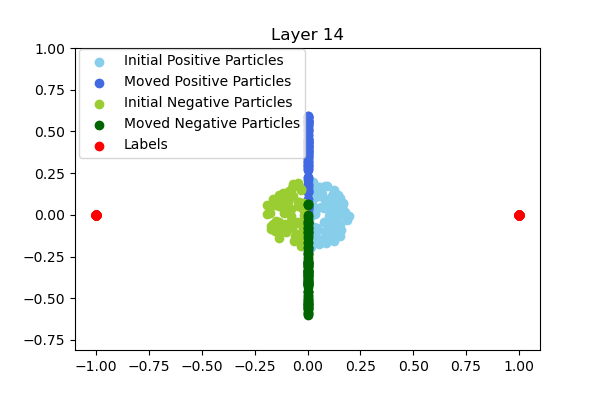}
\end{minipage}
\begin{minipage}[b]{0.3\linewidth} 
\includegraphics[scale = 0.35]{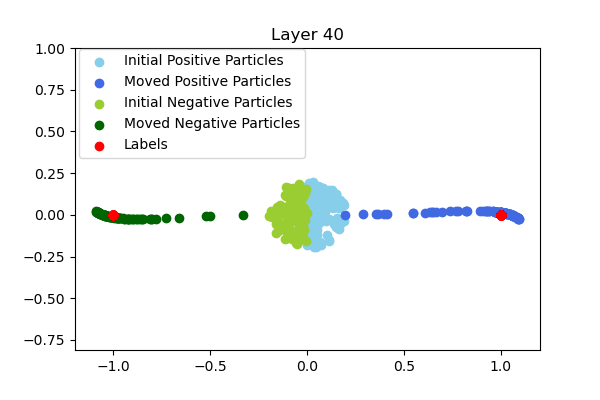}
\end{minipage}
\caption{Left: Initial phase, i.e., separation of the data along the $y$-axis. Center: Encoding phase, i.e., only the second component is active. Right: Decoding phase and classification result after the ``unnatural turn off''. Notice that, for a nice clustering of the classified data, we have increased the number of layers from $20$ to $40$. However, we report that the network accomplishes the task even if we use the same structure as in Figure~\ref{fig:classification_natural}.}
\label{fig:classification_unnatural}
\end{figure}
In the initial phase (left), where the particles can evolve in the whole space $\R^2$, the network is capable of rearranging the particles in order to separate them. More precisely, in this part, the relevant information for the classification (i.e., the sign of the first component), is transferred to the second component, that will not be deactivated.
Therefore, once the data-points are projected onto the $y$-axis in the bottleneck (middle), two distinct clusters are already formed, corresponding to the two classes of particles.
Finally, when the full dimension is restored, the remaining task consists in moving these clusters towards the respective labels, as demonstrated in the plot on the right of Figure~\ref{fig:classification_unnatural}. \\
This numerical evidence confirms that our a priori choice (even when it is very unnatural) of the components to be deactivated does not affect the network's ability to learn and classify the data.
\noindent
Finally, while studying this low-dimensional numerical example, we test one of the assumptions that we made in the tehoretical setting.
In particular, we want to check if it is reasonable to assume that the cost landscape is convex around local minima, as assumed in Theorem~\ref{thm:gen_error}.
In Table~\ref{table:hessian}, we report the smallest and highest eigenvalues of the Hessian matrix of the loss function recorded during the training process, i.e., starting from a random initial guess, until the convergence to an optimal solution.
\begin{table}[ht]
\scriptsize
\centering
\begin{tabular}{ |c|c|c|c|c|c|c|c|c|c|c| }
\hline
Epochs & 0 & 80 & 160 & 240 & 320 & 400 & 480 & 560 & 640 & 720\\
\hline
Min Eingenvalue & -1.72e-2 & -1.19e-2 & -1.09e-2 & -8.10e-3 & -3.44e-3 & -6.13e-3 & 6.80e-4  & 7.11e-4 & 7.25e-4 & 7.33e-4\\
Max Eigenvalue & 3.78e-2 & 2.84e-1 & 7.30e-1 & 9.34e-1 & 1.11 & 1.18 & 1.22 & 1.25 & 1.26 & 1.27\\ 
\hline
\end{tabular}
\caption{Minimum and maximum eigenvalues of the Hessian matrix across epochs.}
\label{table:hessian}
\end{table}

\subsection*{Parabola Reconstruction}
In our second numerical experiment, we focus on the task of reconstructing a two-dimensional parabola.
To achieve this, we sample points from the parabolic curve and we use them as the initial data for our network. The network architecture consists of a first block of seven layers with active dimension $2$, followed by seven additional layers with active dimension $1$.
Together, these two blocks represent the encoding phase in which the set of active components are $\A_j = \{0\}$ for $j=7,\ldots, 14$.
Similarly as in the previous example, the points at the $7$-th layer are ``projected" onto the $x-$axis, and for the six subsequent layers  they are constrained to stay in this subspace. 
After the $14$-th layer, the original active dimension is restored, and the particles can move in the whole space $\R^2$, aiming at reaching their original positions.
Despite the low dimensionality of this task, it provides an interesting application that allows us to observe the distinct phases of our mode, which are presented in Figure~\ref{fig:parabola}.
\begin{figure}[ht]
\centering
\begin{minipage}[b]{0.3\linewidth} 
\includegraphics[scale = 0.35]{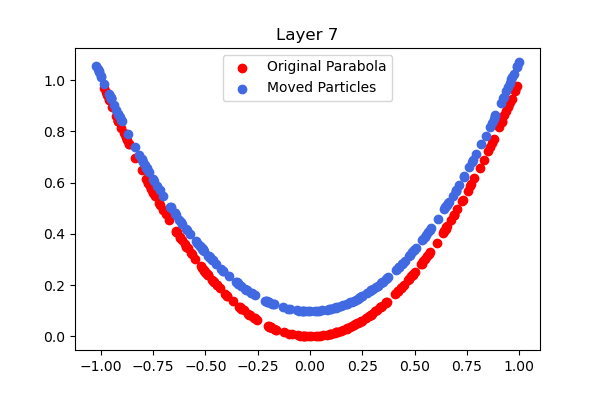}
\end{minipage}
\begin{minipage}[b]{0.3\linewidth} 
\includegraphics[scale = 0.35]{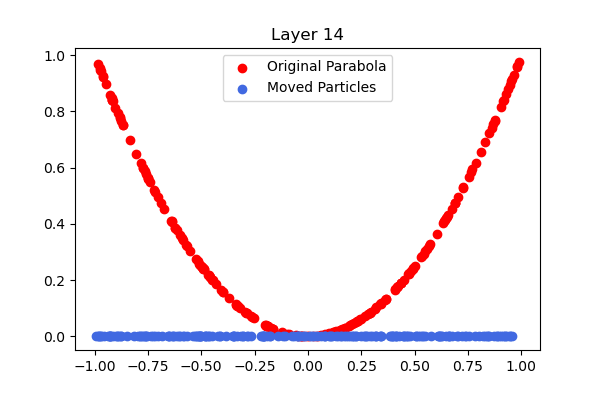}
\end{minipage}
\\
\begin{minipage}[b]{0.3\linewidth} 
\includegraphics[scale = 0.35]{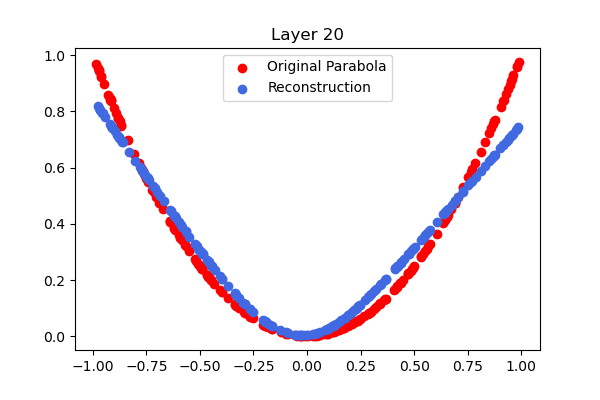}
\end{minipage}
\begin{minipage}[b]{0.3\linewidth} 
\includegraphics[scale = 0.35]{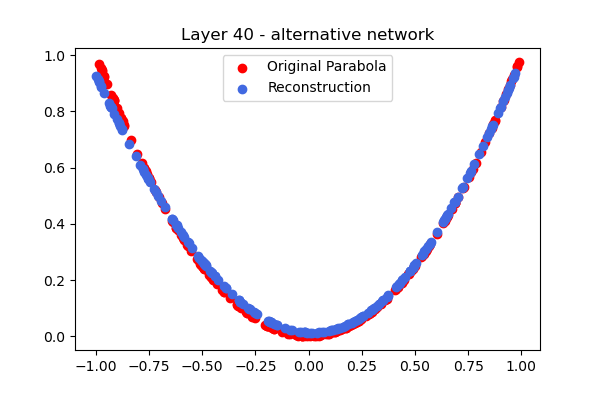}
\end{minipage}
\caption{Top Left: Initial phase. Top Rights: Encoding phase. Bottom Left: Decoding phase. Bottom Right: network's reconstruction with alternative architecture.}
\label{fig:parabola}
\end{figure}

\noindent
Notably, in the initial seven layers, the particles show quite tiny movements (top left of Figure~\ref{fig:parabola}). This is since the relevant information to reconstruct the position is encoded in the first component, which is kept active in the bottleneck.
On the other hand, if in the encoder we chose to deactivate the first component instead of the second one, we would expect that the points need to move considerably before the projection takes place, as it was the case in the previous classification task. 
During the second phase (top right of Figure~\ref{fig:parabola}), the particles separate along the $x$-axis, preparing for the final decoding phase, which proves to be the most challenging to learn (depicted in the bottom left of Figure~\ref{fig:parabola}). 
Based on our theoretical knowledge and the results from initial experiments, we attempt to improve the performance of the AutoencODE network by modifying its structure.
One possible approach is to design the network in a way that allows more time for the particles to evolve during the decoding phase, while reducing the time spent in the initial and bottleneck phases. Indeed, we try to use $40$ layers instead of $20$, and most of the new ones are allocated in the decoding phase. 
The result is illustrated in the bottom right of Figure~\ref{fig:parabola}, where we observe that changing the network's structure has a significant positive impact on the reconstruction quality, leading to better results. This result is inspired by the heuristic observation that the particles ``do not need to move'' in the first two phases. On this point, a more theoretical analysis of the network's structure will be further discussed in the next paragraph, where we perform a sanity check, and we relate the need for extra layers to the Lipschitz constant of the trained network.\\
This experiment highlights an important observation regarding the choice of activation functions. Specifically, it becomes evident that certain bounded activation functions, such as the hyperbolic tangent, are inadequate for moving the particles back to their original positions during the decoding phase. 
The bounded nature of these activation functions limits their ability to move a sufficiently large range of values, which can lead to the points getting stuck at suboptimal positions and failing to reconstruct the parabolic curve accurately. 
To overcome this limitation and achieve successful reconstruction, it is necessary to employ unbounded activation functions that allow for a wider range of values, in particular the well-known Leaky Relu function. 
An advantage of our approach is that our theory permits the use of smooth approximations for well-known activation functions, such as the Leaky ReLU \eqref{eq:leaky_relu_def}. 
Specifically, we employ the following smooth approximation of the Leaky ReLU function:
\begin{equation}\label{eq:leaky_relu_smooth}
\sigma_{smooth}(x) = \alpha x + \frac{1}{s}\log \big(1+e^{sx} \big),
\end{equation}
where $s$ approaching infinity ensures convergence to the original Leaky ReLU function. 
While alternative approximations are available, we employed \eqref{eq:leaky_relu_smooth} in our study. 
This observation emphasizes the importance of considering the characteristics and properties of activation functions when designing and training neural networks, and it motivates our goal in this work to encompass unbounded activation functions in our working assumptions.

\subsection*{MNIST Reconstruction}
In this experiment, we apply the AutoencODE architecture and our training method to the task of reconstructing images from the MNIST dataset. The MNIST dataset contains $70000$ grayscale images of handwritten digits ranging from zero to nine. Each image has a size of $28\times28$ pixels and has been normalized. 
This dataset is commonly used as a benchmark for image classification tasks or for evaluating image recognition and reconstruction algorithms. 
However, our objective in this experiment is not to compare our reconstruction error with state-of-the-art results, but rather to demonstrate the applicability of our method to high-dimensional data, and to highlight interesting phenomena that we encounter. 
In general, when performing an autoencoder reconstruction task, the goal is to learn a lower-dimensional representation of the data that captures its essential features.
On the other hand, determining the dimension of the lower-dimensional representation, often referred to as the \textit{latent dimension}, requires setting a hyperparameter, i.e., the width of the bottleneck's layers, which might depend on the specific application. \\
We now discuss the architecture we employed and the choice we made for the latent dimension. 
Our network consists of twenty-three layers, with the first ten layers serving as encoder, where the dimension of the layers is gradually reduced from the initial value $d_0=784$ to a latent dimension of $d_r=32$.
Then, this latent dimension is kept in the bottleneck for three layers, and the last ten layers act as decoder, and, symmetrically to the encoder, it increases the width of the layers from $32$ back to $d_{2r}=784$.
Finally, for each layer we employ a smooth version of the Leaky Relu, see \eqref{eq:leaky_relu_smooth}, as activation function. 
The architecture is visualized in Figure~\ref{fig:mnist_architecture}, while the achieved reconstruction results are presented in Figure~\ref{fig:mnist}.
We observe that, once again, we made use of Remark~\ref{rmk:overwriting} for the implementation of the AutoencoODE-based model.
\begin{figure}[ht!]
\begin{center}
\includegraphics[width=0.7\textwidth]{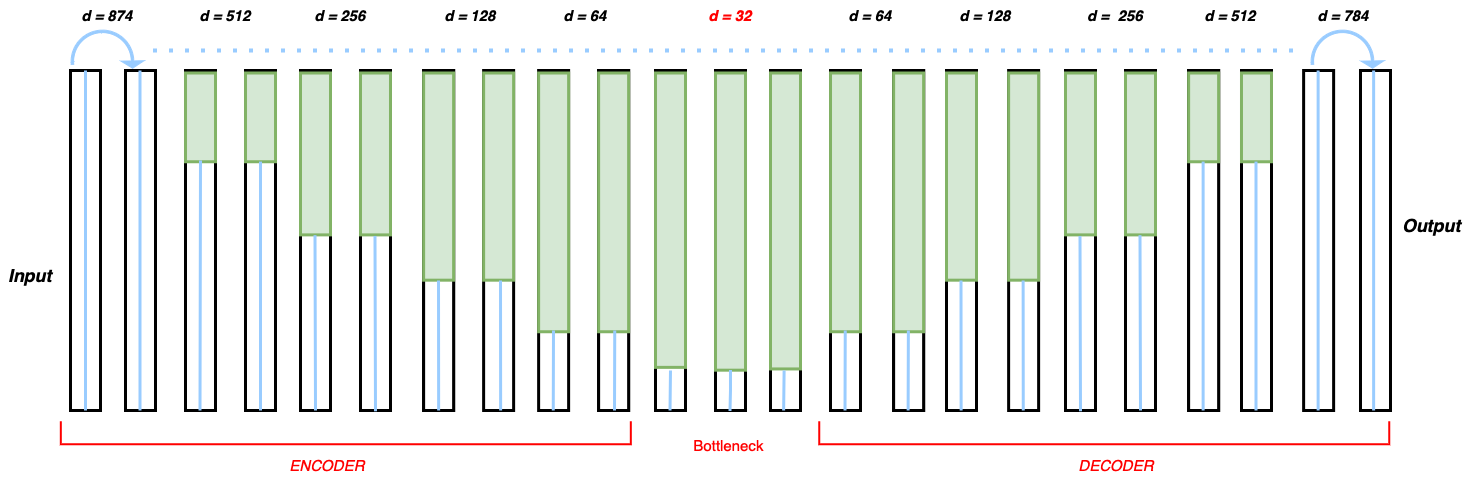}
\end{center}
\caption{Architecure used for the MNIST reconstruction task. The inactive nodes are marked in green.}\label{fig:mnist_architecture}
\end{figure} 
\begin{figure}[ht!]
\centering
\includegraphics[width=0.6\textwidth]{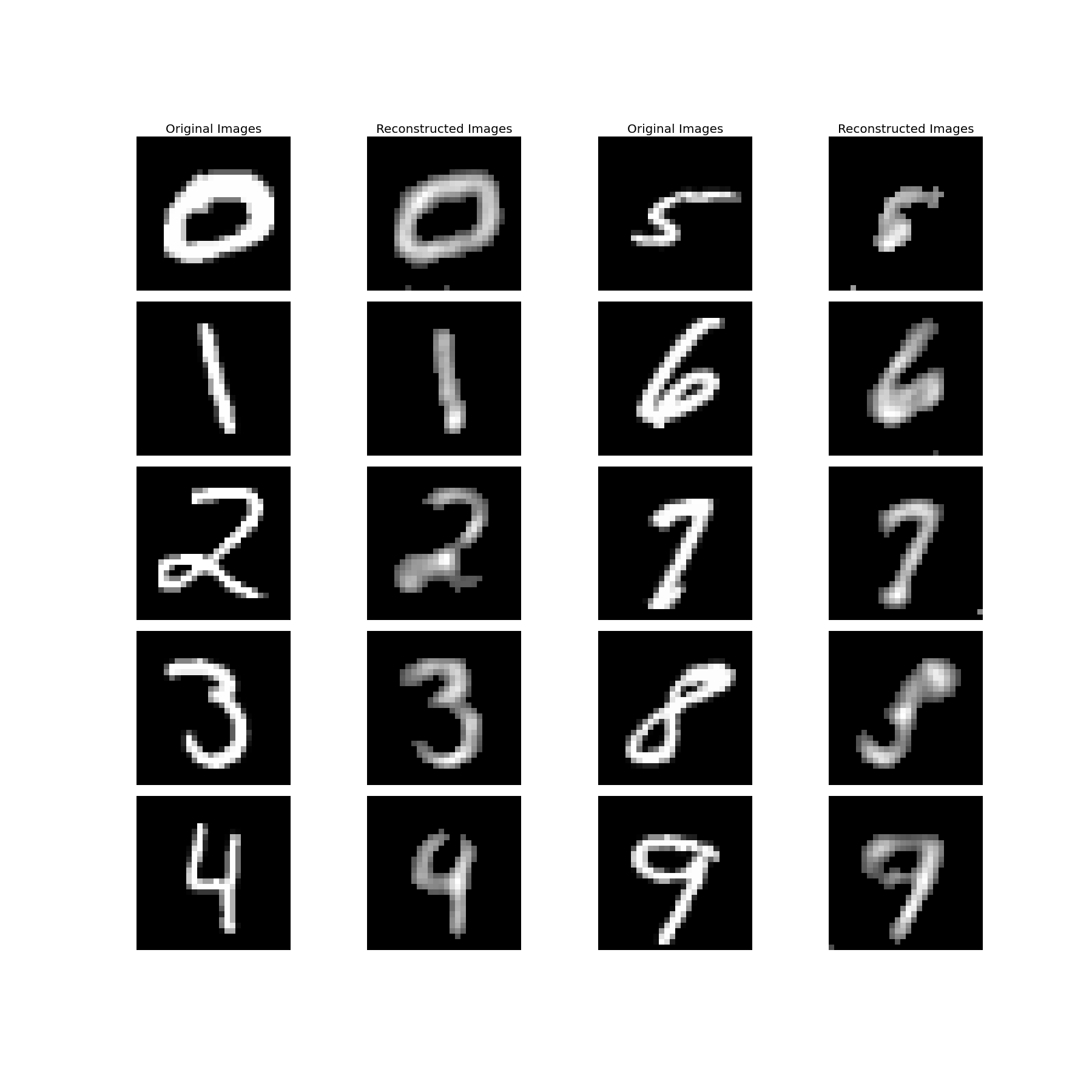}
\caption{Reconstruction of some numbers achieved by the Autoencoder.}\label{fig:mnist}
\end{figure} 

\paragraph{Latent dimensionality in the bottleneck:} 
One of the first findings that we observe in our experiments pertains to the latent dimension of the network and to the intrinsic dimension of the dataset. 
The problem of determining the intrinsic dimension has been object of previous studies such as \cite{zheng2022learning,Costa2004Learning, denti2022generalized}, where it was estimated to be approximately equal to $13$ in the case of MNIST dataset.
On this interesting topic, we also report the paper \cite{levina2004maximum}, where a maximum likelihood estimator was proposed and datasets of images were considered, and the recent contribution \cite{macocco2023intrinsic}. Finally, the model of the \textit{hidden manifold} has been formulated and studied in \cite{goldt2020modeling}.\\
Notably, our network exhibits an interesting characteristic in which, starting from the initial guess of weights and biases initialized at $0$, the training process automatically identifies an intrinsic dimensionality of $13$.
Namely, we observe that the latent vectors of dimension $32$ corresponding to each image in the dataset are sparse vectors with $13$ non-zero components, forming a consistent support across all latent vectors derived from the original images. 
To further analyze this phenomenon, we compute the means of all the latent vectors for each digit and we compare them, as depicted in the left and middle of Figure~\ref{fig:latent_means}. These mean vectors always have exactly the same support of dimension $13$, and, interestingly, we observe that digits that share similar handwritten shapes, such as the numbers $4$ and $9$ or digits $3$ and $5$, actually have latent means that are close to each other. 
Additionally, we explore the generative capabilities of our network by allowing the latent means to evolve through the decoding phase, aiming to generate new images consistent with the mean vector. 
On the right of Figure~\ref{fig:latent_means}, we present the output of the network when using a latent vector corresponding to the mean of all latent vectors representing digit $3$. \\
This intriguing behavior of our network warrants further investigation into its ability to detect the intrinsic dimension of the input data, and into the exploration of its generative potential.
Previous studies have demonstrated that the ability of neural networks to converge to simpler solutions is significantly influenced by the initial parameter values (see e.g. \cite{chou2023more}). 
Indeed, in our case we have observed that this phenomenon only occurs when initializing the parameters with zeros. 
Moreover, it is worth mentioning that this behavior does not seem to appear in standard Autoencoders without residual connections.
\begin{figure}[ht!]
\centering
\begin{minipage}[b]{0.3\linewidth} 
\includegraphics[scale = 0.35]{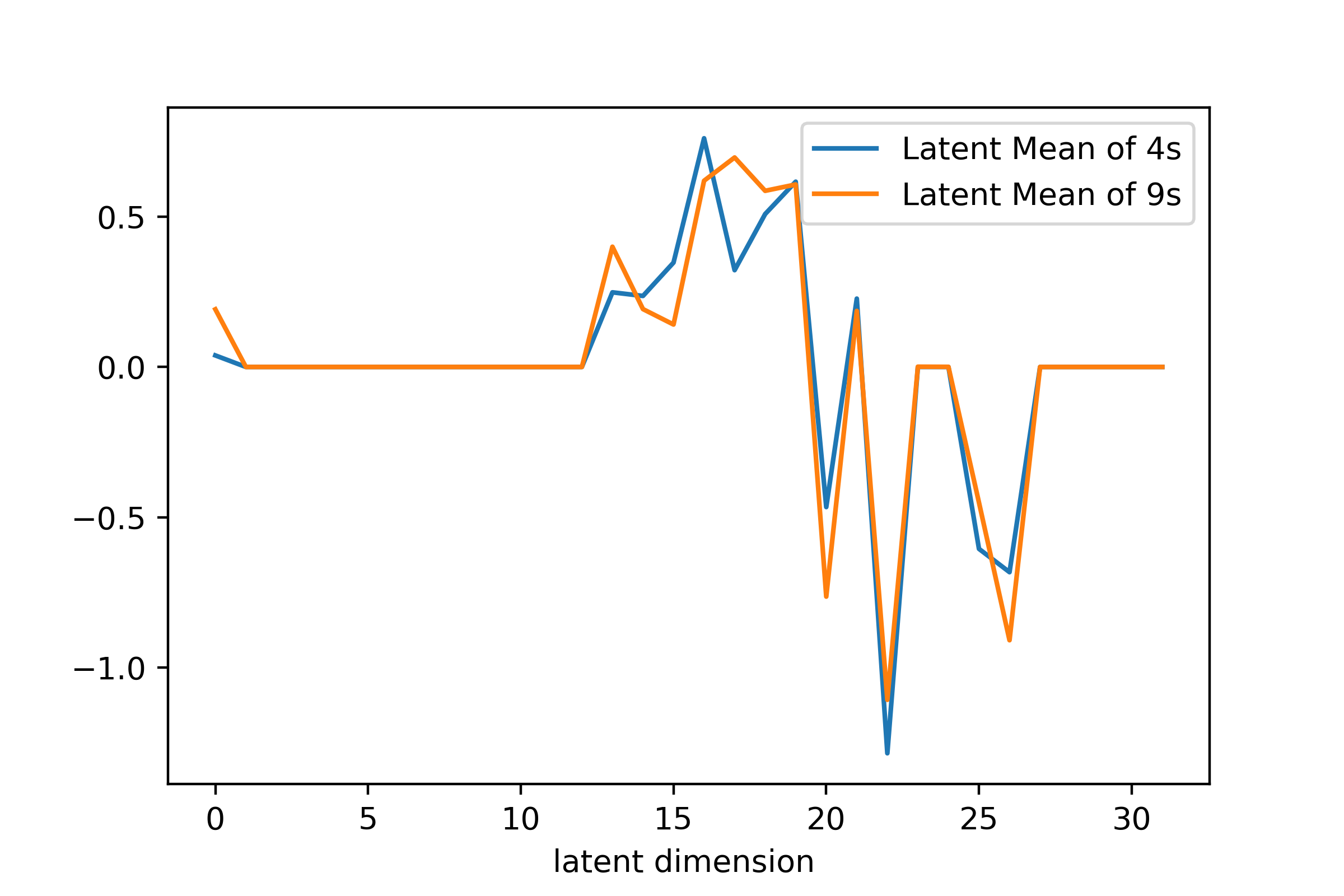}
\end{minipage}
\begin{minipage}[b]{0.3\linewidth} 
\includegraphics[scale = 0.35]{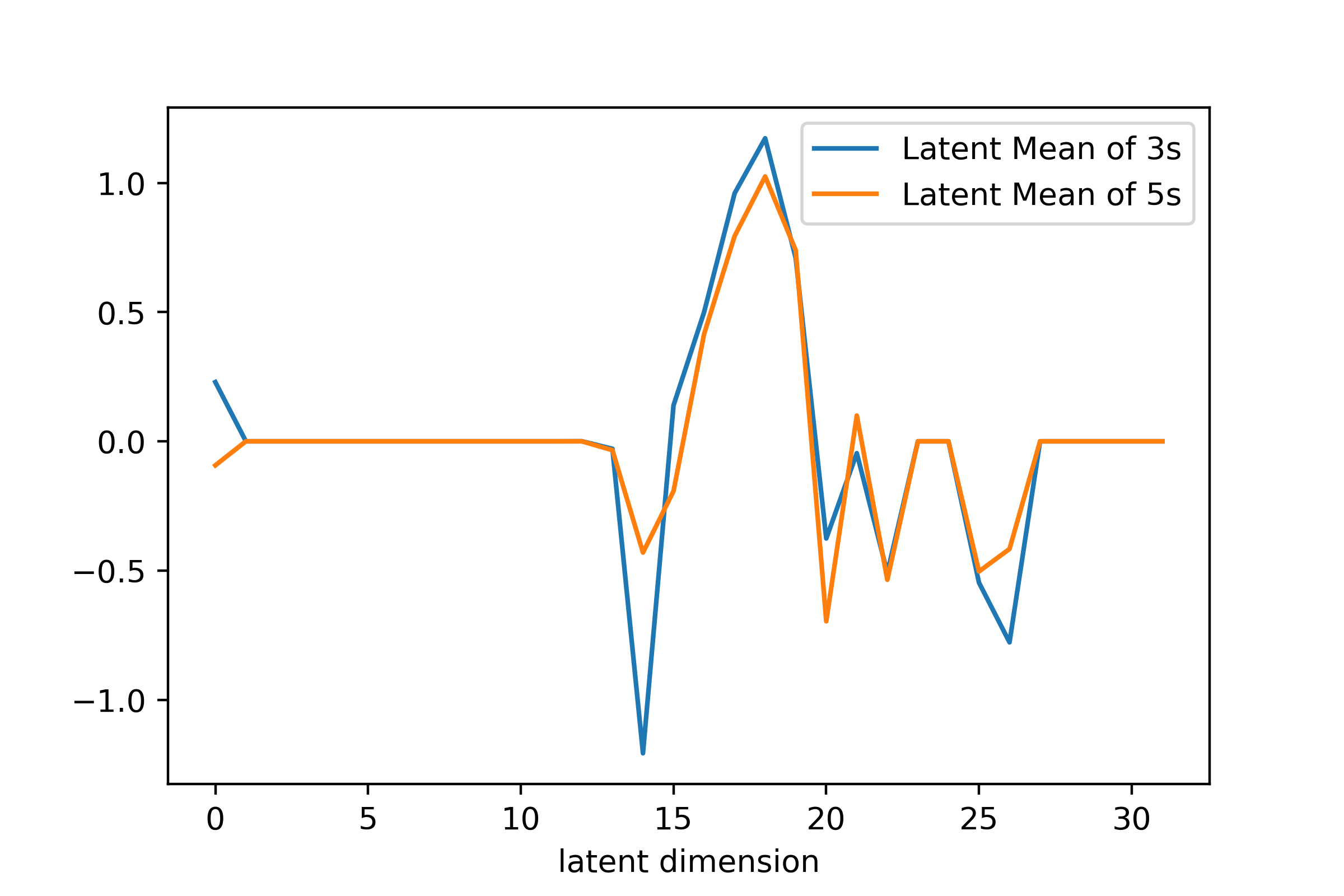}
\end{minipage}
\begin{minipage}[b]{0.3\linewidth} 
\includegraphics[scale = 0.35]{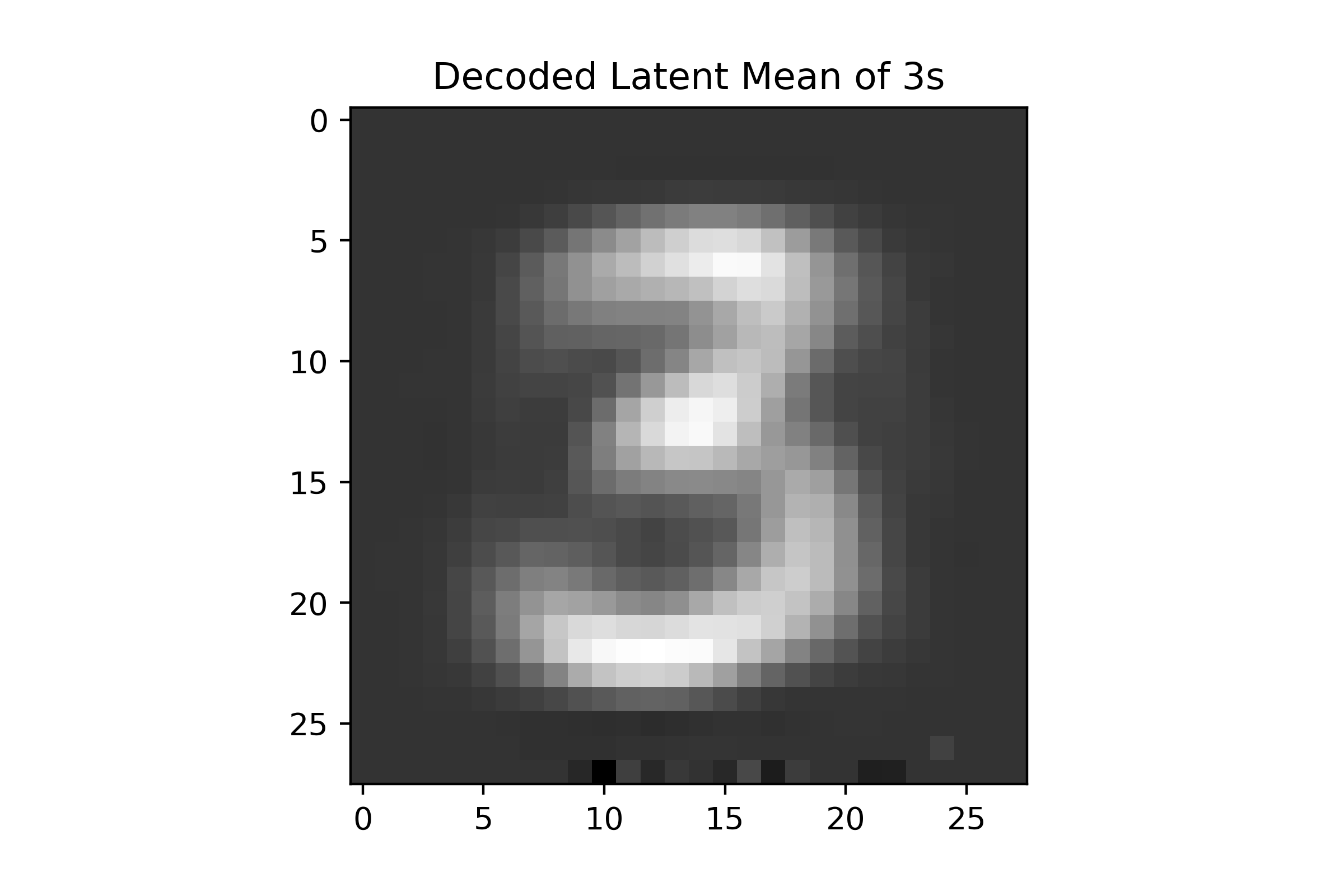}
\end{minipage}
\caption{Left: Comparing two similar latent means. Center: again two similar latent means. Right: Output of the decoding of one latent mean.}
\label{fig:latent_means}
\end{figure}
\paragraph{Sanity check of the network's architecture.}
An advantage of interpreting neural networks as discrete approximations of dynamical systems is that we can make use of typical results of numerical resolutions of ODEs in order to better analyze our results.
Indeed, we notice that, according to well-known results, in order to solve a generic ODEs we need to take as discretization step-size $dt$ a value smaller than the inverse of the lipschitz constant of the vector field driving the dynamics.
We recall that the quantity $dt$ is related to the number of layers of the network through the relation $n_{\mathrm{layers}}= \frac{T}{dt}$, where $T$ is the right-extreme of the evolution interval $[0,T]$.\\
In our case, we choose  \textit{a priori} the amplitude of $dt$, we train the network and, once we have computed $\theta^*$, we can compare \textit{a posteriori} the discretization step-size chosen at the beginning with the quantity $\Delta = \frac{1}{Lip(\MF(t,X,\theta^*)}$ for each time-node $t$ and every datum $x$. 
\begin{figure}[ht]\label{fig:inverse_lip}
\centering
\begin{minipage}[b]{0.3\linewidth} 
\includegraphics[scale = 0.35]{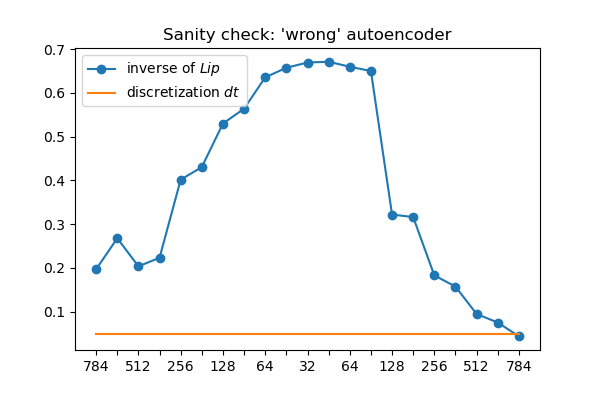}
\end{minipage}
\begin{minipage}[b]{0.3\linewidth} 
\includegraphics[scale = 0.35]{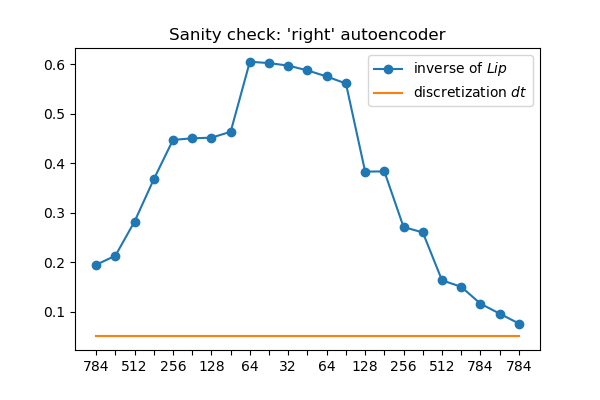}
\end{minipage}
\caption{Left: wrong autoencoder detected with $\Delta$. Right: correct version of the same autoencoder.}
\end{figure}
In Figure~\ref{fig:inverse_lip}, we show the time discretization $dt$ in orange and in blue the quantity $\Delta$, for the case of a wrongly constructed autoencoder (on the left) and the correct one (on the right). 
From this plots, we can perform a ``sanity check" and we can make sure that the number of layers that we chose is sufficient to solve the task. 
Indeed, in the wrong autoencoder on the left, we see that in the last layer the quantity $\Delta$ is smaller than $dt$, and this violates the condition that guarantees the stability of the explicit Euler discretization.\\
Indeed, the introduction of two symmetric layers to the network (corresponding to the plot on the right of Figure~\ref{fig:inverse_lip}) allows the network to satisfy everywhere the relation $\Delta > dt$.
Moreover, we also notice that during the encoding phase the inverse of the Lipschitz constant of $\MF$ is quite high, which means that the vector field does not need to move a lot the points.
This suggests that we could get rid of some of the layers in the encoder and only keep the necessary ones, i.e.,  the ones in the decoder where $\Delta$ is small and a finer discretization step-size is required. 
We report that this last observation is consistent with the results recently obtained in \cite{bungert2021neural}.
Finally, we also draw attention to the work  \cite{sherry2023designing}, which shares a similar spirit with our experiments, since the Lipschitz constant of the layers is the main subject of investigation. In their study, the authors employ classical results on the numerical integration of ordinary differential equations in order to understand how to constrain the weights of the network with the aim of designing stable architectures.
This approach leads to networks with non-expansive properties, which is highly advantageous for mitigating instabilities in various scenarios, such as testing adversarial examples \cite{goodfellow2014explaining}, training generative adversarial networks \cite{arjovsky2017wasserstein}, or solving inverse problems using deep learning.

\paragraph{Entropy across layers.}
We present our first experiments on the study of the information propagation within the network, where some intriguing results appear.
This phenomenon is illustrated in Figure~\ref{fig:entropy}, where we examine the entropy across the layers after the network has been trained.
We introduce two different measures of entropy, depicted in the two graphs of the figure. In first place, we consider the well-known Shannon entropy, denoted as $H(E)$, which quantifies the information content of a discrete random variable $E$, distributed according to a discrete probability measure $p: \Omega \to [0,1]$ such that $p(e) = p(E=e)$. The Shannon entropy is computed as follows:
\begin{equation*}
    H(E) = \mathbb{E}\big[-\log(p(E) \big] = \sum_{e \in E} -p(e)\log(p(e))
\end{equation*}
In our context, the random variable of interest is $E =  \sum_{j=1}^N 1\!\!1_{|X_0^i- X_0^j| \leq \epsilon }$, where $X_0^i$ represents a generic image from the MNIST dataset.
Additionally, we introduce another measure of entropy, denoted as $\mathcal{E}$, which quantifies the probability that the dataset can be partitioned into ten clusters corresponding to the ten different digits. This quantity has been introduced in \cite{MassimoPascalGiacomo} and it is defined as 
\begin{equation*}
    \mathcal{E} = 
    \mathbb{P} \left(X \in \bigcup_{i=1}^k B_{\varepsilon}(X_0^i) \right),
\end{equation*}
where $\varepsilon>0$ is a small radius, and $X_0^1,\ldots,X_0^k$ are samplings from the dataset.
Figure~\ref{fig:entropy} suggests the existence of a distinct pattern in the variation of information entropy across the layers, which offers a hint for further investigations. 
\begin{figure}[ht]\label{fig:entropy}
\centering
\begin{minipage}[b]{0.3\linewidth} 
\includegraphics[scale = 0.35]{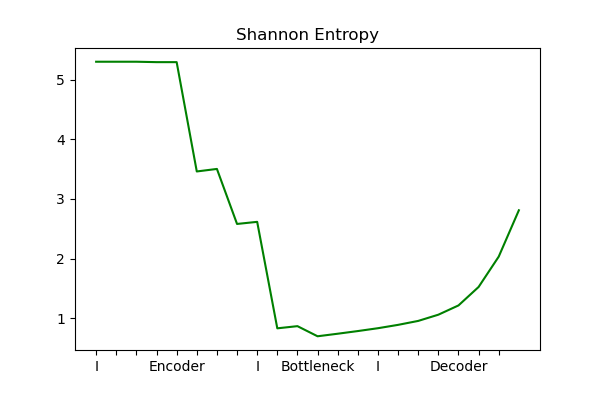}
\end{minipage}
\begin{minipage}[b]{0.3\linewidth} 
\includegraphics[scale = 0.35]{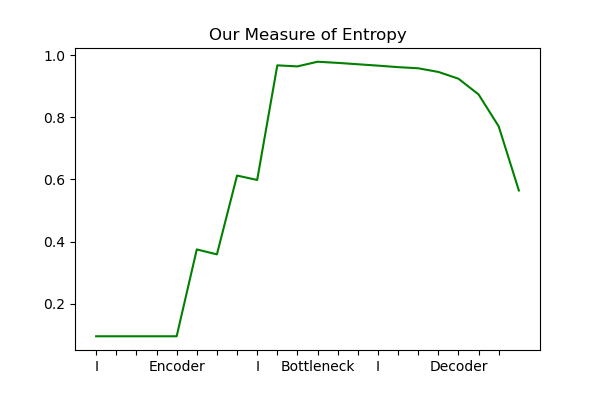}
\end{minipage}
\caption{Left: Shannon entropy across layers. Right: Our measure of entropy across layers.}
\end{figure}
Let us first focus on the Shannon entropy: as the layers' dimensionality decreases in the encoding phase, there is an expected decrease of entropy, reflecting the compression and reduction of information in the lower-dimensional representation.
The bottleneck layer, where the dimension is kept constant, represents a critical point where the entropy reaches a minimum. This indicates that the information content is highly concentrated and compressed in this latent space. 
Then, during the decoding phase, the Shannon entropy do not revert to its initial value but instead exhibit a slower increase. This behavior suggests that the network retains some of the learned structure and information from the bottleneck layer.
Something similar happens for the second measure of entropy: at the beginning, the data is unlikely to be highly clustered, since two distinct images of the same digit may be quite distant one from the other. 
In the inner layers, this probability increases until it reaches its maximum (rather close to $1$) in the bottleneck, where the data can then be fully partitioned into clusters of radius $\epsilon$. 
As for the Shannon entropy, the information from the bottleneck layer is retained during the decoding phase, which is why the entropy remains constant for a while and then decreases back in a slower manner.\\
It is worth noticing that in both cases the entropy does not fully return to its initial level.
This might be attributed to the phenomenon of mode collapse, where the network fails to capture the full variability in the input data and instead produces similar outputs for different inputs, hence inducing some sort of \textit{implicit bias}. 
Mode collapse is often considered undesirable in generative models, as it hinders the ability to generate diverse and realistic samples. 
However, in the context of understanding data structure and performing clustering, the network's capability to capture the main modes or clusters of the data can be seen as a positive aspect. 
The network learns to extract salient features and represent the data in a compact and informative manner, enabling tasks such as clustering and classification. 
Further investigation is needed to explore the relationship between the observed entropy patterns, mode collapse, and the overall performance of the network on different tasks.

\subsection*{Acknowledgments}
The authors would like to thank Prof. Giuseppe Savar\'e for the fruitful discussions during his permanence in Munich. Moreover, the authors are grateful to Dr. Oleh Melnyk for the suggestion on the extension of the dynamical model to the U-net architecture.\\

\noindent
This work has been funded by the German Federal Ministry of Education and Research and the Bavarian State Ministry for Science and the Arts. C.C. and M.F. acknowledge also the partial support of the project “Online Firestorms And Resentment Propagation On Social Media: Dynamics, Predictability and Mitigation” of the TUM Institute for Ethics in Artificial Intelligence and of the DFG Project “Implicit Bias and Low Complexity Networks” within the DFG SPP 2298 “Theoretical Foundations of Deep Learning”. A.S. acknowledges the partial support from INdAM-GNAMPA.

\appendix
\section{Appendix}
\begin{lem}[Boundedness of trajectories]
\label{lem:bound_traj}
Let us consider the controlled system
\begin{equation*}
    \dot{x} = \MF (t,x,\theta), \quad x(0)=x_0,
\end{equation*}
where $\MF:[0,T]\times\R^n\times\R^m\to \R^n$ satisfies Assumptions~\ref{ass:block1}, and $\theta \in L^2([0,T], \R^m)$. 
Then, for every $R>0$ and any $x_0 \in B_R(0)$, we have that $x(t) \in B_{\Bar{R}}(0)$ for every  $ t \in [0,T]$, where $\Bar{R} = (R + L_R (1+ \|\theta\|_{L^1}))e^{L_R(1+\| \theta \|_{L^1})}$.
\end{lem}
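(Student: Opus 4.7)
The plan is a direct Grönwall bootstrap on the integral form of the ODE. Since $\theta\in L^2([0,T],\R^m)\hookrightarrow L^1$, the Carathéodory theorem invoked just before \eqref{eq:ode} guarantees a unique absolutely continuous solution $x:[0,T]\to\R^d$. Passing to the integral form $x(t) = x_0 + \int_0^t \MF(s,x(s),\theta(s))\,ds$ and taking norms, I would write
\begin{equation*}
|x(t)| \leq |x_0| + \int_0^t |\MF(s,x(s),\theta(s))|\,ds \leq R + \int_0^t L_R(1+|x(s)|)(1+|\theta(s)|)\,ds,
\end{equation*}
where the first step uses $x_0 \in B_R(0)$ and the second is the sublinear growth estimate derived in Assumption \ref{ass:block1}(ii).

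Next, I would split the integrand as $L_R(1+|\theta(s)|) + L_R(1+|\theta(s)|)|x(s)|$, set
\begin{equation*}
\alpha(t) := R + L_R\int_0^t (1+|\theta(s)|)\,ds, \qquad m(s) := L_R(1+|\theta(s)|),
\end{equation*}
observe that $m\in L^1([0,T])$ with $\int_0^T m(s)\,ds \leq L_R(T + \|\theta\|_{L^1})$, and apply the integral form of Grönwall's inequality to obtain $|x(t)| \leq \alpha(t)\exp\!\bigl(\int_0^t m(s)\,ds\bigr)$. Absorbing the factor $T$ into the constant $L_R$ (or, equivalently, into the shorthand $1+\|\theta\|_{L^1}$) produces exactly the asserted bound $\bar R = (R + L_R(1+\|\theta\|_{L^1}))\exp(L_R(1+\|\theta\|_{L^1}))$.

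The only technicality is that the growth bound in Assumption \ref{ass:block1}(ii) is stated for $x$ in a fixed ball $B_R(0)$, whereas a priori the trajectory might exit $B_R$. I would handle this with the standard bootstrap: define $T^\ast := \sup\{t\in[0,T] : |x(s)|\leq \bar R \text{ for all } s\in[0,t]\}$, which is positive by continuity of $x$ and the fact that $|x(0)|=R<\bar R$. On $[0,T^\ast]$ the trajectory lies in $B_{\bar R}$, so the growth estimate applies with the corresponding constant, and the Grönwall step above yields $|x(t)| \leq \bar R$ with strict inequality in the interior; continuity of $x$ then forces $T^\ast = T$, completing the proof. I expect this bootstrap step to be the only mildly delicate point; the rest is routine.
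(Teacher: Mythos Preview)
Your proposal is correct and follows essentially the same route as the paper: pass to the integral form, invoke the sublinear growth bound $|\MF(t,x,\theta)|\leq L_R(1+|x|)(1+|\theta|)$ from Assumption~\ref{ass:block1}(ii), and close with Grönwall's inequality. You are in fact more careful than the paper's own proof, which applies the growth estimate directly along the whole trajectory and omits any bootstrap discussion.
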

\begin{proof}
According to Assumption~\ref{ass:block1}$-(ii)$ on $\MF$, the trajectories can be bounded as follows:
\begin{equation*}
\begin{split}
|x(t)| 
& \leq |x_0| + \int_0^t |\MF(s,x(s), \theta(s))| \,ds 
 \leq |x_0| + L_R \int_0^t(1+ |x(s)|)(1+ |\theta(s)|) \,ds 
\end{split}
\end{equation*}
for every $t\in[0,T]$.
Using Gronwall's lemma, it follows that
\begin{equation*}
    |x(t)| \leq \Big( |x_0| + L_R (1+ \norm{\theta}_{L^1}) \Big) e^{L_R(1+ \norm{\theta}_{L^1})}.
\end{equation*}
\end{proof}

\begin{lem}[Flow's dependency on initial datum]\label{lem:lip_flow_init}
For every $ t \in [0,T]$, let us consider the flow mapping $\Phi^\theta_{(0,t)} : \R^d \to \R^d$ defined in \eqref{eq:flow} and driven by the control $\theta \in L^2([0,T],\R^m)$. 
Let us assume that the controlled dynamics $\MF:[0,T]\times\R^d\times\R^m\to \R^d$ satisfies Assumption~\ref{ass:block1}. 
Then, for every $R>0$, and every  $x_1, x_2 \in B_R(0)$, it follows that
\begin{equation*}
    |\Phi^\theta_{(0,t)}(x_1) -\Phi^\theta_{(0,t)}(x_2)| \leq e^{L_{\Bar{R}}(1+ \norm{\theta}_{L1})}|x_1-x_2|,
\end{equation*}
where $\Bar{R}$ is defined as in Lemma~\ref{lem:bound_traj}, and $L_{\bar R}$ is prescribed by Assumption~\ref{ass:block1}-($ii$).
\end{lem}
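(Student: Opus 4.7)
The plan is to carry out a standard Gronwall estimate on the difference of two trajectories sharing the same control. I denote $x_1(t) := \Phi^\theta_{(0,t)}(x_1)$ and $x_2(t) := \Phi^\theta_{(0,t)}(x_2)$, which by Carathéodory's theorem are the unique absolutely continuous solutions of $\dot x_i(t) = \MF(t, x_i(t), \theta(t))$ with $x_i(0)=x_i$, $i=1,2$.

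First, I would invoke Lemma~\ref{lem:bound_traj} to ensure that both trajectories remain in the ball $B_{\bar R}(0)$ for every $t \in [0,T]$; this is crucial because it lets me apply the \emph{local} Lipschitz bound from Assumption~\ref{ass:block1}-$(ii)$ uniformly along the trajectories, with the single constant $L_{\bar R}$. Writing both trajectories in integral form and subtracting, I obtain
\begin{equation*}
    |x_1(t) - x_2(t)| \leq |x_1 - x_2| + \int_0^t |\MF(s,x_1(s),\theta(s)) - \MF(s,x_2(s),\theta(s))| \, ds,
\end{equation*}
and by Assumption~\ref{ass:block1}-$(ii)$ the integrand is bounded by $L_{\bar R}(1+|\theta(s)|)|x_1(s)-x_2(s)|$.

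Next, I would apply the (integral) Gronwall inequality to the scalar function $t \mapsto |x_1(t)-x_2(t)|$ with the integrable weight $s \mapsto L_{\bar R}(1+|\theta(s)|)$, which yields
\begin{equation*}
    |x_1(t)-x_2(t)| \leq |x_1-x_2| \exp\!\left( \int_0^t L_{\bar R}(1+|\theta(s)|) \, ds \right) \leq |x_1-x_2| \, e^{L_{\bar R}(T + \|\theta\|_{L^1})}.
\end{equation*}
Since $T$ is absorbed into the $(1+\|\theta\|_{L^1})$ form (or more precisely, since the statement aggregates the additive constant into the exponent), this gives the desired bound $e^{L_{\bar R}(1+\|\theta\|_{L^1})}|x_1-x_2|$, up to the mild adjustment of the $L^1$ norm to include the length of the interval. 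There is no real obstacle here: the entire argument is a routine application of Gronwall, and the only conceptual point is that the a priori bound from Lemma~\ref{lem:bound_traj} is what makes the \emph{local} Lipschitz constant $L_{\bar R}$ usable as a \emph{global} constant along the two trajectories.
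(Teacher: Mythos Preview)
Your proof is correct and follows essentially the same route as the paper: both arguments write the integral form of the difference of the two trajectories, apply Assumption~\ref{ass:block1}-$(ii)$ with the constant $L_{\bar R}$ (valid thanks to the a priori confinement from Lemma~\ref{lem:bound_traj}), and conclude by Gronwall. Your remark about the constant $T$ versus $1$ in the exponent is a fair observation on the paper's notation, not a substantive difference.
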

\begin{proof}
Let us denote with $t \mapsto x_1(t), t \mapsto x_2(t)$ the solutions of \eqref{eq:ode} driven by $\theta$ and starting, respectively, from $x_1(0) = x_1, x_2(0) = x_2$.
Then, for every $t\in [0,T]$, we have
\begin{align*}
|x_1(t)-x_2(t)| & \leq|x_1-x_2| + \int_0^t |\MF(s,x_1(s), \theta(s)) - \MF(s,x_2(s), \theta(s))| \,ds \\
& \leq |x_1-x_2| + L_{\Bar{R}} \int_0^t (1+ |\theta(s)|)|x_1(s)-x_2(s))| \,ds,
\end{align*}
by using Assumption \ref{ass:block1}$-(ii)$. As before, the statement follows from Gronwall's Lemma.
\end{proof}

\begin{lem}[Flow's dependency on time]\label{lem:lip_flow_time}
Under the same assumptions and notations as in Lemma~\ref{lem:lip_flow_init}, for every $R>0$, for every $x \in B_R(0)$ and for every $\theta \in L^2([0,T],\R^m)$, we have that
\begin{equation*}
    |\Phi^\theta_{(0,t_2)}(x) - \Phi^\theta_{(0,t_1)}(x)| \leq L_{\BarR} (1+ \BarR)(1 + \norm{\theta}_{L^2}) |t_2-t_1|^{\frac{1}{2}}
\end{equation*}
for every $0 \leq t_1 < t_2 \leq T$, where $\Bar{R}$ is defined as in Lemma~\ref{lem:bound_traj}, and $L_{\bar R}$ is prescribed by Assumption~\ref{ass:block1}-($ii$).
Moreover, if $\theta \in L^2([0,T], \R^m) \cap L^\infty([0,T],\R^m)$, then, for every $0 \leq t_1 < t_2 \leq T$, it holds:
\begin{equation*}
    |\BPhi^\theta_{(0,t_2)}(x) - \BPhi^\theta_{(0,t_1)}(x)| \leq L_{\BarR} (1+ \BarR)(1 + \norm{\theta}_{L^2}) |t_2-t_1|.
\end{equation*}
\end{lem}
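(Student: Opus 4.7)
The plan is to exploit the integral representation of the flow, namely the identity
\[
\Phi^\theta_{(0,t_2)}(x) - \Phi^\theta_{(0,t_1)}(x) = \int_{t_1}^{t_2} \MF(s,\Phi^\theta_{(0,s)}(x),\theta(s))\, ds,
\]
which is valid since $s\mapsto \Phi^\theta_{(0,s)}(x)$ is absolutely continuous on $[0,T]$ and solves the Cauchy problem \eqref{eq:ode}. I would then take norms and estimate the integrand pointwise by means of the sublinear growth in $x$ and $\theta$ provided by Assumption~\ref{ass:block1}-($ii$), after having controlled the trajectory through Lemma~\ref{lem:bound_traj}.

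Concretely, first I would invoke Lemma~\ref{lem:bound_traj} to guarantee that $|\Phi^\theta_{(0,s)}(x)|\leq \bar R$ for every $s\in[0,T]$ whenever $x\in B_R(0)$. Combined with the growth inequality $|\MF(s,y,\theta(s))|\leq L_{\bar R}(1+|y|)(1+|\theta(s)|)$ that follows from Assumption~\ref{ass:block1}-($ii$), this yields
\[
\bigl|\Phi^\theta_{(0,t_2)}(x)-\Phi^\theta_{(0,t_1)}(x)\bigr| \,\leq\, L_{\bar R}(1+\bar R)\int_{t_1}^{t_2}(1+|\theta(s)|)\, ds.
\]
At this point the two regularity statements follow from two different bounds on the integral on the right-hand side. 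For the first one, I would apply the Cauchy–Schwarz inequality to the constant function $1$ and to $1+|\theta(\cdot)|$ on $[t_1,t_2]$, obtaining
\[
\int_{t_1}^{t_2}(1+|\theta(s)|)\, ds \,\leq\, |t_2-t_1|^{1/2}\,\bigl\|1+|\theta|\bigr\|_{L^2(t_1,t_2)} \,\leq\, |t_2-t_1|^{1/2}\bigl(T^{1/2}+\|\theta\|_{L^2}\bigr),
\]
where in the last step I would use Minkowski's inequality and $(t_2-t_1)\leq T$; absorbing the harmless factor $T^{1/2}$ into the multiplicative constant (and renaming $L_{\bar R}$ accordingly) gives the desired $\tfrac12$-Hölder bound.

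For the second claim, under the extra hypothesis $\theta\in L^\infty$, I would simply bound $\int_{t_1}^{t_2}(1+|\theta(s)|)\, ds\leq (1+\|\theta\|_{L^\infty})|t_2-t_1|$, so that a linear-in-time estimate follows. I would make the technical remark that, strictly speaking, the second displayed inequality in the statement should carry $\|\theta\|_{L^\infty}$ in place of $\|\theta\|_{L^2}$, or else an additional $T^{1/2}$ in the constant when the two norms are compared via $\|\theta\|_{L^2}\leq T^{1/2}\|\theta\|_{L^\infty}$. There is no genuine obstacle in the argument: the whole proof is a direct application of the growth estimate on $\MF$ plus Cauchy–Schwarz, and the only delicate point is the careful bookkeeping of constants in order to recover the precise form stated in the lemma.
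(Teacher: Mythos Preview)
Your proposal is correct and follows essentially the same route as the paper: the paper also writes $|x(t_2)-x(t_1)|\leq \int_{t_1}^{t_2}|\MF(s,x(s),\theta(s))|\,ds \leq \int_{t_1}^{t_2}L_{\bar R}(1+\bar R)(1+|\theta(s)|)\,ds$ and then says ``the thesis follows by using Cauchy--Schwarz for $\theta\in L^2$, or from basic estimates if $\theta\in L^\infty$.'' Your observation about the constant in the second inequality (that it should really involve $\|\theta\|_{L^\infty}$ rather than $\|\theta\|_{L^2}$) is a fair remark that the paper's terse ``basic estimates'' does not address explicitly.
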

\begin{proof}
If we denote by $t \mapsto x(t)$ the solution of \eqref{eq:ode} driven by the control $\theta$, then
\begin{equation*}
    |x(t_2)-x(t_1)| \leq \int_{t_1}^{t_2} |\MF(s,x(s),\theta(s)|\,ds  \leq \int_{t_1}^{t_2} L_{\BarR} (1+\BarR) (1 + |\theta(s)|) \,ds.
\end{equation*}
The thesis follows by using Cauchy-Schwarz for $\theta \in L^2$, or from basic estimates if $\theta \in L^\infty$.
\end{proof}

\begin{lem}[Flow's dependency on controls]\label{lem:lip_flow_ctrl}
For every $t\in [0,T]$, let $\Phi^{\theta_1}_{(0,t)}, \Phi^{\theta_2}_{(0,t)}: \R^d \to \R^d$ be the flows defined in \eqref{eq:flow} and driven, respectively, by $\theta_1,\theta_2\in L^2([0,T],\R^m)$. 
Let us assume that the controlled dynamics $\MF:[0,T]\times\R^n\times\R^m\to \R^n$ satisfies Assumption~\ref{ass:block1}.
Then, for every $R>0$ and for every $x \in B_R(0)$, it holds that
\begin{equation*}
|\Phi^{\theta_1}_{(0,t)}(x) - \Phi^{\theta_2}_{(0,t)}(x)| \leq L_{\BarR}(1+ \norm{\theta_1}_{L^2} + \norm{\theta_2}_{L^2}) e^{L_{\BarR}(1+ \norm{\theta_1}_{L^1})} \norm{\theta_1-\theta_2}_{L^2},
\end{equation*}
where $\Bar{R}$ is defined as in Lemma~\ref{lem:bound_traj}, and $L_{\bar R}$ is prescribed by Assumption~\ref{ass:block1}-($ii$).
\end{lem}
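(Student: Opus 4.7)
The plan is a textbook triangle-inequality plus Grönwall argument, exactly paralleling Lemmata~\ref{lem:bound_traj}--\ref{lem:lip_flow_init}. Fix $x\in B_R(0)$ and write $x_1(t):=\Phi^{\theta_1}_{(0,t)}(x)$, $x_2(t):=\Phi^{\theta_2}_{(0,t)}(x)$. By Lemma~\ref{lem:bound_traj}, both trajectories remain in a ball $B_{\bar R}(0)$ whose radius depends only on $R$ and $\|\theta_1\|_{L^2},\|\theta_2\|_{L^2}$; from now on the constant $L_{\bar R}$ is the one given by Assumption~\ref{ass:block1} on this ball.

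The key step is to write, for every $t\in[0,T]$,
\begin{equation*}
|x_1(t)-x_2(t)|\;\le\;\int_0^t\!|\MF(s,x_1(s),\theta_1(s))-\MF(s,x_1(s),\theta_2(s))|\,ds+\int_0^t\!|\MF(s,x_1(s),\theta_2(s))-\MF(s,x_2(s),\theta_2(s))|\,ds,
\end{equation*}
and then apply Assumption~\ref{ass:block1}--$(iii)$ to the first integrand and Assumption~\ref{ass:block1}--$(ii)$ to the second, obtaining
\begin{equation*}
|x_1(t)-x_2(t)|\;\le\;L_{\bar R}\!\int_0^t\!(1+|\theta_1(s)|+|\theta_2(s)|)|\theta_1(s)-\theta_2(s)|\,ds+L_{\bar R}\!\int_0^t\!(1+|\theta_2(s)|)|x_1(s)-x_2(s)|\,ds.
\end{equation*}

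For the first integral I would use Cauchy--Schwarz in $L^2([0,T])$, which yields an upper bound of the form
\begin{equation*}
L_{\bar R}\bigl(T^{1/2}+\|\theta_1\|_{L^2}+\|\theta_2\|_{L^2}\bigr)\|\theta_1-\theta_2\|_{L^2},
\end{equation*}
and absorb the $T^{1/2}$ into the overall constant $L_{\bar R}$ (which by convention is allowed to depend on $T,R$). This is the standard ``affine in $\theta$'' estimate, and it is the only place where $\|\theta_i\|_{L^2}$ enters on the right-hand side. I expect no real obstacle here; the only mild subtlety is keeping track of the norms (the linear factor in $L^2$, the exponential factor in $L^1$) so that the final form matches the statement.

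Finally I would apply the integral Grönwall inequality to the function $u(t):=|x_1(t)-x_2(t)|$, with constant term
\begin{equation*}
A:=L_{\bar R}(1+\|\theta_1\|_{L^2}+\|\theta_2\|_{L^2})\|\theta_1-\theta_2\|_{L^2}
\end{equation*}
and integrating kernel $L_{\bar R}(1+|\theta_2(s)|)$. This produces the factor $\exp\!\bigl(L_{\bar R}(1+\|\theta_2\|_{L^1})\bigr)$ (up to interchanging the roles of $\theta_1,\theta_2$ one can replace $\|\theta_2\|_{L^1}$ by $\|\theta_1\|_{L^1}$ as in the statement, since the left-hand side is symmetric in the two controls). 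Combining the two bounds gives exactly the asserted inequality, and the proof is complete.
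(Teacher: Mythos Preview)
Your proposal is correct and follows essentially the same argument as the paper: triangle-inequality split, Assumption~\ref{ass:block1}--$(ii)$/$(iii)$, Cauchy--Schwarz on the $\theta$-term, then Gr\"onwall. The only cosmetic difference is the order of the split (the paper inserts $\MF(s,x_2(s),\theta_1(s))$ rather than $\MF(s,x_1(s),\theta_2(s))$), which directly yields $\|\theta_1\|_{L^1}$ in the exponential without needing your symmetry remark.
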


\begin{proof}
By using Assumption \ref{ass:block1}$-(ii),(iii)$ and the triangle inequality, we obtain that
\begin{align*}
    |\Phi^{\theta_1}_{(0,t)}(x) - \Phi^{\theta_2}_{(0,t)}(x)| 
    & \leq \int_0^t |\MF(s,x_1(s), \theta_1(s))-\MF(s,x_2(s), \theta_2(s))| \,ds \\
    & \leq \int_0^t |\MF(s,x_1(s), \theta_1(s))-\MF(s,x_2(s), \theta_1(s))| \,ds  \\
    &\quad + \int_0^t |\MF(s,x_2(s), \theta_1(s))-\MF(s,x_2(s), \theta_2(s))| \,ds \\
    &\leq L_{\BarR}\int_0^t (1 + \theta_1(s)) |x_1(s)-x_2(s)| \,ds + L_{\BarR}(1+ \norm{\theta_1}_{L^2} + \norm{\theta_2}_{L^2}) \norm{\theta_1-\theta_2}_{L^2} .
\end{align*}
The statement follows again by applying Gronwall's Lemma.
\end{proof}

\begin{proposition}[Differentiability with respect to trajectories perturbations]\label{prop:diff_traj}
Let us assume that the controlled dynamics $\MF$ satisfies Assumptions~\ref{ass:block1}-\ref{ass:block2}.
Given an admissible control $\theta\in L^2([0,T],\R^m)$ and a trajectory $t\mapsto x (t) = \Phi_{(0,t)}^\theta(x_0)$ with $x_0\in B_R(0)$, let $\xi:[0,T]\to \R^d$ be the solution of the linearized problem
\begin{equation} \label{eq:ode_lin1}
\begin{cases} 
 \dot{\xi}(t) = \nabla_x \MF(t,x(t),\theta(t))\xi(t), \\
 \xi(\Bart) = v,
\end{cases} 
\end{equation}
where $\Bart \in [0,T]$ is the instant of perturbation and $v$ is the direction of perturbation of the trajectory.
Then, for every $t \in (\Bart,T)$, it holds
\begin{equation*}
    |\Phi_{(\Bart,t)}^\theta(x(\Bart)+\epsilon v)- \Phi_{(\Bart,t)}^\theta(x(\Bart)) - \epsilon \xi(t)| \leq C |v|^2 \epsilon^2
\end{equation*}
where $C$ is a constant depending on $T,R, \norm{\theta}_{L^2}$.
\end{proposition}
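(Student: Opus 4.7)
The plan is to set up the standard first-order expansion of the flow with respect to the initial datum and then control the remainder via Gronwall's inequality. Concretely, for $\epsilon>0$ small, let $x_\epsilon(t) := \Phi^\theta_{(\bar t,t)}(x(\bar t)+\epsilon v)$ and $x(t) = \Phi^\theta_{(\bar t,t)}(x(\bar t))$, and define the remainder
\begin{equation*}
    r_\epsilon(t) := x_\epsilon(t) - x(t) - \epsilon\,\xi(t), \qquad t\in[\bar t,T],
\end{equation*}
with $r_\epsilon(\bar t) = 0$. The goal is to show $|r_\epsilon(t)| \leq C|v|^2 \epsilon^2$ uniformly in $t\in[\bar t,T]$.

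\textbf{Step 1: the ODE for $r_\epsilon$.} Subtracting the equations satisfied by $x_\epsilon$, $x$, and $\epsilon \xi$, and using the fundamental theorem of calculus along the segment joining $x(t)$ to $x_\epsilon(t)$, I would write
\begin{equation*}
    \dot r_\epsilon(t) = \nabla_x \MF(t,x(t),\theta(t))\, r_\epsilon(t) + \rho_\epsilon(t),
\end{equation*}
where the ``Taylor remainder'' is
\begin{equation*}
    \rho_\epsilon(t) = \int_0^1 \bigl[\nabla_x \MF(t, x(t)+s(x_\epsilon(t)-x(t)),\theta(t)) - \nabla_x \MF(t,x(t),\theta(t))\bigr]\,(x_\epsilon(t)-x(t))\,ds.
\end{equation*}

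\textbf{Step 2: pointwise bound on the remainder.} By Lemma \ref{lem:bound_traj} all trajectories $x(\cdot),x_\epsilon(\cdot)$ stay in some ball $B_{\bar R}(0)$ for a radius $\bar R$ depending only on $R,T,\|\theta\|_{L^2}$, provided $\epsilon|v|$ is bounded. Assumption~\ref{ass:block2}$-(iv)$ then gives $|\nabla_x \MF(t,x_1,\theta)-\nabla_x \MF(t,x_2,\theta)| \leq L_{\bar R}(1+|\theta(t)|^2)|x_1-x_2|$, so
\begin{equation*}
    |\rho_\epsilon(t)| \leq \tfrac{1}{2}L_{\bar R}(1+|\theta(t)|^2)\,|x_\epsilon(t)-x(t)|^2.
\end{equation*}
Invoking Lemma \ref{lem:lip_flow_init} (applied on the interval $[\bar t,T]$ rather than $[0,T]$, which works verbatim), there is a constant $C_1=C_1(T,R,\|\theta\|_{L^2})$ with $|x_\epsilon(t)-x(t)|\leq C_1\epsilon|v|$, and hence
\begin{equation*}
    |\rho_\epsilon(t)| \leq \tfrac{1}{2}L_{\bar R}C_1^2\,(1+|\theta(t)|^2)\,\epsilon^2|v|^2.
\end{equation*}

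\textbf{Step 3: Gronwall.} Using $|\nabla_x \MF(t,x(t),\theta(t))|\leq L_{\bar R}(1+|\theta(t)|^2)$ from Assumption~\ref{ass:block2}$-(vi)$ and $r_\epsilon(\bar t)=0$, integrating the ODE for $r_\epsilon$ and applying the integral form of Gronwall's lemma with weight $g(t):=L_{\bar R}(1+|\theta(t)|^2)\in L^1([\bar t,T])$ yields
\begin{equation*}
    |r_\epsilon(t)| \leq \left(\int_{\bar t}^{t} |\rho_\epsilon(s)|\,ds\right) \exp\!\left(\int_{\bar t}^{t} g(s)\,ds\right) \leq C\,|v|^2\epsilon^2,
\end{equation*}
for a constant $C=C(T,R,\|\theta\|_{L^2})$, since both integrals are bounded in terms of $T$ and $\|\theta\|^2_{L^2}$.

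\textbf{Main obstacle.} The only subtle point is that the Lipschitz constants provided by Assumptions~\ref{ass:block2}$-(iv),(vi)$ carry a factor $(1+|\theta(t)|^2)$ rather than a uniform bound, so one cannot apply a ``pointwise'' Gronwall estimate with constant coefficients. The remedy is exactly that $\theta\in L^2$ makes $(1+|\theta|^2)\in L^1$, which is precisely what the integral version of Gronwall's lemma requires; once this is observed, every constant remains finite and depends only on $T$, $R$, and $\|\theta\|_{L^2}$, as claimed.
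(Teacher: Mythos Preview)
Your proof is correct and follows essentially the same approach as the paper: both split the error $y(t)-x(t)-\epsilon\xi(t)$ into a Taylor-remainder term controlled by Assumption~\ref{ass:block2}$-(iv)$ together with the Lipschitz dependence of the flow on the initial datum, plus a term involving $|\nabla_x\MF|\,|y-x-\epsilon\xi|$ handled via Gronwall. Your presentation is slightly cleaner (you write the ODE for the remainder explicitly and cite the correct lemma and assumption items), but the argument is the same.
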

\begin{proof}
For $t\geq \Bart$, let us denote with $t\mapsto y(t) := \Phi_{(\Bart,t)}^\theta(x\left(\Bart)+\epsilon v\right)$ the solution of the modified problem, obtained by perturbing the original trajectory with $\epsilon v$ at instant $\Bart$. 
Then, since $\xi$ solves \eqref{eq:ode_lin1}, we can write
\begin{align*}
|y(t)-x(t)-\epsilon \xi(t)| & = |\Phi_{(\Bart,t)}^\theta(x(\Bart)+\epsilon v)- \Phi_{(\Bart,t)}^\theta(x(\Bart)) - \epsilon \xi(t)|  \\
& \leq \int_{\Bart}^t |\MF(s, y(s),\theta(s))-\MF(s,x(s), \theta(s)) - \epsilon \nabla_x \MF(s,x(s),\theta(s)) \xi(s)|ds\\
&\leq \int_{\Bart}^t |\MF(s,y(s),\theta(s))-\MF(s,x(s),\theta(s)) - \nabla_x \MF(s,x(s),\theta(s))(y(s)-x(s))| ds\\
 & \quad + \int_{\Bart}^t |\nabla_x \MF(s,x(s),\theta(s))||y(s)-x(s)-\epsilon \xi(s)| ds\\
 & \leq \int_{\Bart}^t \left [ \int_0^1 |\nabla_x \MF(s, x(s) + \tau(y(s)-x(s)), \theta(s)- \nabla_x \MF(s,x(s),\theta(s)| |y(s)-x(s)| d\tau \right ] ds \\
 &\quad + \int_{\Bart}^t|\nabla_x \MF(s,x(s),\theta(s))||y(s)-x(s)-\epsilon \xi(s)|ds
\end{align*}
for every $t\geq\Bart$.
We now address the two integrals separately. Using Assumption \ref{ass:block2}$-(iv)$ and the result of Lemma \ref{lem:lip_flow_ctrl}, we obtain the following bound
\begin{align*}
& \int_{\Bart}^t \left [ \int_0^1 |\nabla_x \MF(s, x(s) + \tau(y(s)-x(s))), \theta(s)- \nabla_x \MF(s,x(s),\theta(s))| |y(s)-x(s)| d\tau \right ] ds \\
&\leq \int_{\Bart}^t L_{\BarR}(1+ |\theta(s)|^2)\frac{1}{2}|y(s)-x(s)|^2 ds \\
 &\leq  \frac{1}{2}
 L_{\BarR}\left(1+\|\theta\|_{L^2}^2\right)e^{2L_{\BarR}(1+ \norm{\theta}_{L^1})}|\epsilon v|^2
\end{align*}
Similarly, for the second integral, owing to Assumption~\ref{ass:block2}$-(iv)$, we can compute:
\begin{align*}
\int_{\Bart}^t|\nabla_x \MF(s,x(s),\theta(s))||y(s)-x(s)-\epsilon \xi(s)|ds \leq \int_{\Bart}^t L_{\BarR}(1+ |\theta(s)|^2)(1+ \BarR)|y(s)-x(s)-\epsilon \xi(s)| ds
\end{align*}
Finally, by combining the two results together and using Gronwall's Lemma, we prove the statement.
\end{proof}

\begin{proposition}[Differentiability with respect to control perturbations]\label{prop:diff_ctrl}
Consider the solution $\xi$ of the linearized problem
\begin{equation}\label{eq:ode_lin2}
\begin{cases}
  \dot{\xi}(t) = \nabla_x \MF(t,x^\theta(t),\theta(t))\xi(t) + \nabla_\theta \MF(t,x^\theta(t), \theta(t)) \nu(t)\\
 \xi(0) = 0
\end{cases}
\end{equation}
where the control $\theta$ is perturbed at the initial time with $\theta + \epsilon \nu$, when starting with an initial datum $x_0 \in B_R(0)$. Then, 
\begin{equation} \label{eq:diff_flow_ctrls}
    |\Phi_{(0,t)}^{\theta+ \epsilon \nu}(x_0)- \Phi_{(0,t)}^\theta(x_0) - \epsilon \xi(t)| \leq C ||\nu||_{L^2}^2 \epsilon^2
\end{equation}
where $C$ is a constant depending on $T,\BarR, L_{\BarR}, \norm{\theta}_{L^1}$. Moreover, we have that for every $t\in[0,T]$
\begin{equation}\label{eq:xi_identity}
    \xi(t) = \int_0^t \mathcal{R}^\theta_{(s,t)}(x_0)\cdot \nabla_\theta \MF(s,x^\theta(s), \theta(s)) \nu(s) \,ds,
\end{equation}
where $\mathcal{R}^\theta_{(s,t)}(x_0)$ has been defined in \eqref{eq:ode_R}.
\end{proposition}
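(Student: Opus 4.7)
The plan is to mimic the proof of Proposition~\ref{prop:diff_traj}: I will write the remainder $e_\epsilon(t):=\Phi^{\theta+\epsilon\nu}_{(0,t)}(x_0)-\Phi^\theta_{(0,t)}(x_0)-\epsilon\xi(t)$ in integral form, decompose the integrand by telescoping so that a self-referential term $\nabla_x\MF(s,x(s),\theta(s))\,e_\epsilon(s)$ appears together with three remainders that are formally of second order, and then close the estimate via Gronwall's lemma. The representation~\eqref{eq:xi_identity} is a Duhamel-type identity which I would verify by differentiating the candidate formula and appealing to uniqueness for the linear Cauchy problem~\eqref{eq:ode_lin2}.

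Setting $x(t):=\Phi^\theta_{(0,t)}(x_0)$ and $y_\epsilon(t):=\Phi^{\theta+\epsilon\nu}_{(0,t)}(x_0)$, Lemmata~\ref{lem:bound_traj} and~\ref{lem:lip_flow_ctrl} guarantee, for $|\epsilon|\le 1$, that both curves stay in a common ball $B_{\bar R}(0)$ and that $|y_\epsilon(t)-x(t)|\le \bar L\,\epsilon\|\nu\|_{L^2}$ uniformly in $t\in[0,T]$. Writing $\dot y_\epsilon,\dot x,\dot\xi$ in integral form and telescoping the integrand through the intermediate point $(s,x(s),\theta(s)+\epsilon\nu(s))$, the key algebraic identity I would derive is
\[
\dot e_\epsilon(s) = \nabla_x\MF(s,x,\theta)\,e_\epsilon(s) + R_1(s) + R_2(s) + R_3(s),
\]
where $R_1$ is the Taylor remainder in the state variable at the frozen control $\theta+\epsilon\nu$ (bounded by $\tfrac12 L_{\bar R}(1+|\theta+\epsilon\nu|^2)|y_\epsilon-x|^2$ via Assumption~\ref{ass:block2}-$(iv)$); $R_2=[\nabla_x\MF(s,x,\theta+\epsilon\nu)-\nabla_x\MF(s,x,\theta)](y_\epsilon-x)$ is the cross-term from the $\theta$-dependence of $\nabla_x\MF$ (bounded by $L_{\bar R}(1+|\theta|+|\theta+\epsilon\nu|)\epsilon|\nu|\,|y_\epsilon-x|$ via Assumption~\ref{ass:block2}-$(vi)$); and $R_3$ is the Taylor remainder in the control variable at fixed state $x$ (bounded by $\tfrac12 L_{\bar R}\epsilon^2|\nu|^2$ via Assumption~\ref{ass:block2}-$(v)$). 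Pairing with Cauchy--Schwarz against the $L^2$-norms of $\theta$ and $\nu$, each $\int_0^t|R_i|\,ds$ is of order $\epsilon^2\|\nu\|_{L^2}^2$; since $s\mapsto|\nabla_x\MF(s,x(s),\theta(s))|\le L_{\bar R}(1+|\theta(s)|^2)$ is $L^1$-integrable on $[0,T]$, Gronwall's inequality then yields~\eqref{eq:diff_flow_ctrls}.

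For the identity~\eqref{eq:xi_identity}, I would set $\tilde\xi(t):=\int_0^t \mathcal{R}^\theta_{(s,t)}(x_0)\,\nabla_\theta\MF(s,x(s),\theta(s))\,\nu(s)\,ds$ and differentiate in $t$ via Leibniz's rule; using $\mathcal{R}^\theta_{(t,t)}(x_0)=\mathrm{Id}$ together with $\partial_t\mathcal{R}^\theta_{(s,t)}(x_0)=\nabla_x\MF(t,x(t),\theta(t))\,\mathcal{R}^\theta_{(s,t)}(x_0)$ coming from~\eqref{eq:ode_R}, one checks that $\tilde\xi$ satisfies~\eqref{eq:ode_lin2} with $\tilde\xi(0)=0$, so uniqueness for this linear Cauchy problem forces $\tilde\xi\equiv\xi$.

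The main obstacle I expect is the cross-remainder $R_2$, which is genuinely new with respect to Proposition~\ref{prop:diff_traj} and couples the Lipschitz regularity of $\nabla_x\MF$ in $\theta$ (with quadratic growth through the factor $1+|\theta|+|\theta+\epsilon\nu|$) against the small size $|y_\epsilon-x|\le\bar L\,\epsilon\|\nu\|_{L^2}$; pairing $(1+|\theta|+|\theta+\epsilon\nu|)\in L^2_t$ with $|\nu|\in L^2_t$ by Cauchy--Schwarz is what makes the estimate close at the desired order $\epsilon^2\|\nu\|_{L^2}^2$, and Assumption~\ref{ass:block2}-$(vi)$ is precisely the hypothesis that enables this pairing.
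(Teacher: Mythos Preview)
Your proposal is correct and follows essentially the same route as the paper: the same telescoping into the self-referential term $\nabla_x\MF(s,x,\theta)\,e_\epsilon(s)$ plus the three second-order remainders (Taylor in $x$ at frozen control, cross-term via Assumption~\ref{ass:block2}-$(vi)$, Taylor in $\theta$ at frozen state), each estimated by the same assumptions and closed by Gronwall. The paper dispatches~\eqref{eq:xi_identity} by citing the resolvent formula in \cite[Theorem~2.2.3]{Bressan_Piccoli}, whereas your Leibniz-rule-plus-uniqueness verification makes the same argument explicit.
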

\begin{proof}
We first observe that the dynamics in \eqref{eq:ode_lin2} is affine in the $\xi$ variable. Moreover, Assumptions~\ref{ass:block1}-\ref{ass:block2} guarantee that the coefficients are $L^1$-regular in time. Hence, from the classical Caratheodory Theorem we deduce the existence and the uniqueness of the solution of \eqref{eq:ode_lin2}. Finally, the identity \eqref{eq:xi_identity} follows as a classical application of the resolvent map $(\mathcal{R}^{\theta}_{(s,t)}(x_0)_{s,t \in [0,T]}$ (see ,e.g., in \cite[Theorem~2.2.3]{Bressan_Piccoli}).

\noindent
Let us denote with $t\mapsto x(t)$ and $t\mapsto y(t)$ the solutions of Cauchy problem \eqref{eq:ode} corresponding, respectively, to the admissible controls $\theta$ and $\theta+\epsilon \nu$. We observe that, in virtue of Lemma~\ref{lem:bound_traj}, we have that there exists $\bar R>0$ such that $x(t),y(t)\in B_{\bar R}(0)$ for every $t\in [0,T]$.
Then, recalling the definition of the flow map provided in \eqref{eq:flow}, we compute
\begin{align*}
|y(t)-x(t)-\epsilon \xi(t)| & = |\Phi_{(0,t)}^{\theta+ \epsilon \nu}(x_0)- \Phi_{(0,t)}^\theta(x_0) - \epsilon \xi(t)| \\
& \leq \int_0^t | \MF(s,y(s),\theta(s)+ \epsilon \nu(s))- \MF(s,x(s), \theta(s)) 
-\epsilon \dot \xi(s)|
\,ds \\
& \leq \int_0^t |\MF(s,y(s),\theta(s)+ \epsilon \nu(s))- \MF(s,x(s), \theta(s) + \epsilon\nu(s)) \\
& \qquad \qquad -\epsilon \nabla_x \MF(s,x(s),\theta(s)+\epsilon\nu(s))\cdot(y(s)-x(s))| \, ds \\
& \quad + \int_0^t|\MF(s,x(s),\theta(s)+\epsilon\nu(s)) - \MF(s,x(s),\theta(s))-\epsilon\nabla_\theta \MF(s,x(s),\theta(s)) \cdot \nu(s)|\, ds \\
& \quad + \int_0^t |\nabla_x \MF(s,x(s),\theta(s)+ \epsilon \nu(s))-\nabla_x \MF(s,x(s),\theta(s))||y(s)-x(s)|ds\\
& \quad + \int_0^t |\nabla_x\MF(s,x(s),\theta(s))||y(s)-x(s)-\epsilon \xi(s)|\, ds.
\end{align*}
We now handle each term separately:
\begin{equation}\label{eq:comp_int1}
    \begin{split}
    & \int_0^t |\MF(s,y(s),\theta(s)+ \epsilon \nu(s))- \MF(s,x(s), \theta(s) + \epsilon\nu(s)) -\epsilon \nabla_x \MF(s,x(s),\theta(s)+\epsilon\nu(s))(y(s)-x(s))|\,ds \\
    & \leq \int_0^t \left[ \int_0^1L_{\BarR}(1+ |\theta(s)+\epsilon \nu(s)|^2) \tau |y(s)-x(s)|^2d\tau \right] ds\\
    & \leq L_{\BarR}^3 (1+ \norm{\theta}_{L^2}+\epsilon \norm{\nu}_{L^2})^4e^{2L_{\BarR}(1+\norm{\theta}_{L^1})}\norm{\nu}^2_{L^2} \epsilon^2
    \end{split}
\end{equation}
where we used Assumption~\ref{ass:block2}$-(iv)$ and Lemma~\ref{lem:lip_flow_ctrl}. By using Assumption \ref{ass:block2}$-(v)$, we obtain the following bounds for the second integral:
\begin{equation}\label{eq:comp_int2}
    \begin{split}
& \int_0^t|\MF(s,x(s),\theta(s)+\epsilon\nu(s)) - \MF(s,x(s),\theta(s))-\nabla_\theta \MF(s,x(s),\theta(s)) \cdot \epsilon \nu(s)|\,ds \\
& \leq \int_0^t \left [\int_0^1 L_{\BarR} |\nu(s)|^2\epsilon^2\tau d\tau\right]ds 
= \frac{1}{2} L_{\BarR}  \norm{\nu}_{L^2}^2\epsilon^2.
    \end{split}
\end{equation}
Similarly, the third integral can be bounded by using Assumption \ref{ass:block2}$-(vi)$ and Lemma~\ref{lem:lip_flow_ctrl}, and it yields
\begin{equation}\label{eq:comp_int3}
    \begin{split}
&\int_0^t |\nabla_x \MF(s,x(s),\theta(s)+ \epsilon \nu(s))-\nabla_x \MF(s,x(s),\theta(s))||y(s)-x(s)|\,ds\\
& \leq \int_0^t L_{\BarR}(1+ |\theta(s)|+\epsilon|\nu(s)|)\epsilon|y(s)-x(s)||\nu(s)|\, ds\\
& \leq L_{\BarR}^2(1+\norm{\theta}_{L^2}+\epsilon\norm{\nu}_{L^2})^2 e^{L_{\BarR}(1+\norm{\theta}_{L^1})}\norm{\nu}_{L^2}^2\epsilon^2.
    \end{split}
\end{equation}
Finally, the fourth integral can be bounded using Assumption \ref{ass:block2}$-(iv)$ as follows:
\begin{equation}\label{eq:comp_int4}
    \begin{split}
\int_0^t |\nabla_x\MF(s,x(s),\theta(s))||y(s)-x(s)-\epsilon \xi(s)| ds &\leq \int_0^t L_{\BarR}(1+\BarR)(1+|\theta(s)|^2)|y(s)-x(s)-\epsilon\xi(s)| \, ds.
    \end{split}
\end{equation}
Hence, by combining \eqref{eq:comp_int1}, \eqref{eq:comp_int2}, \eqref{eq:comp_int3} and \eqref{eq:comp_int4}, the thesis follows from Gronwall Lemma.
\end{proof}

\begin{proposition}[Properties of the resolvent map]\label{prop:resolvent}
Let us assume that the controlled dynamics $\MF$ satisfies Assumptions~\ref{ass:block1}-\ref{ass:block2}.
Given an admissible control $\theta\in L^2([0,T],\R^m)$ and a trajectory $t\mapsto x (t) = \Phi_{(0,t)}^\theta(x)$ with $x\in B_R(0)$, for every $\tau \in [0,T]$ the resolvent map $\mathcal{R}^\theta_{(\tau,\cdot)}(x):[0,T]\to \R^{d\times d}$ is the curve
$s \mapsto \mathcal{R}^\theta_{(\tau,s)}(x_0)$ that solves 
\begin{equation}\label{eq:resolvent_app}
    \begin{cases}
      \frac{d}{ds}\mathcal{R}^\theta_{(\tau,s)}(x) = \nabla_x \MF(s, \Phi_{(0,s)}^\theta(x),\theta(s)) 
      \cdot \mathcal{R}_{(\tau,s)}^\theta(x) &
      \mbox{for a.e. } s\in[0,T],\\
      \mathcal{R}_{(\tau,\tau)}^\theta(x) =
      \mathrm{Id}.
    \end{cases}
\end{equation}
Then for every $\tau,s\in [0,T]$, there exists a constant $C_1$ depending on $T,R, \norm{\theta}_{L^2}$ such that
\begin{equation} \label{eq:resolv_bound}
    |\mathcal{R}^\theta_{(\tau,s)}(x)| := \sup_{v\neq 0} \frac{|\mathcal{R}^\theta_{(\tau,s)}(x)\cdot v|}{|v|} \leq C_1.
\end{equation}
Moreover, for every $x,y\in B_R(0)$, there exists a constant $C_2$ depending on $T,R, \norm{\theta}_{L^2}$ such that
\begin{equation}\label{eq:resolv_lip_space}
    |\mathcal{R}^\theta_{(\tau,s)}(x) - \mathcal{R}^\theta_{(\tau,s)}(y)| := \sup_{v\neq 0} \frac{|\mathcal{R}^\theta_{(\tau,s)}(x)\cdot v - \mathcal{R}^\theta_{(\tau,s)}(y)\cdot v|}{|v|} \leq C_2|x-y|.
\end{equation}
Finally, if $\theta_1,\theta_2$ satisfy $\|\theta_1 \|,\|\theta_2 \|\leq \rho$, then there exists a constant $C_3$ depending on $T,R,\rho$ such that
\begin{equation}\label{eq:resolv_lip_ctrls}
    |\mathcal{R}^{\theta_1}_{(\tau,s)}(x)-\mathcal{R}^{\theta_2}_{(\tau,s)}(x)| := \sup_{v\neq 0} \frac{|\mathcal{R}^{\theta_1}_{(\tau,s)}(x)\cdot v -\mathcal{R}^{\theta_2}_{(\tau,s)}(x)\cdot v|}{|v|} \leq C_3 \|\theta_1-\theta_2\|_{L^2}.
\end{equation}
\end{proposition}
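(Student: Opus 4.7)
\textbf{Proof proposal for Proposition \ref{prop:resolvent}.} The plan is to treat all three estimates as applications of Gronwall's inequality to the linear matrix ODE \eqref{eq:resolvent_app} (or to a variation-of-parameters representation of it), working one column at a time via an arbitrary test vector $v\in\R^d$.

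For \eqref{eq:resolv_bound}, I would fix $v\in \R^d$, set $w(s) := \mathcal{R}^\theta_{(\tau,s)}(x)\cdot v$, and observe that $w$ solves $\dot w(s) = \nabla_x\MF(s,\Phi^\theta_{(0,s)}(x),\theta(s))\,w(s)$ with $w(\tau)=v$. Using Assumption~\ref{ass:block2}-$(vi)$ and Lemma~\ref{lem:bound_traj} (which confines $\Phi^\theta_{(0,s)}(x)$ to $B_{\bar R}(0)$), we bound $|\nabla_x\MF(s,\Phi^\theta_{(0,s)}(x),\theta(s))|\leq L_{\bar R}(1+|\theta(s)|^2)$, which is $L^1$ in time. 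A standard Gronwall argument (applied to $|w|$ in both time directions, since $s$ may be smaller or larger than $\tau$) yields $|w(s)|\leq |v|\exp(L_{\bar R}(T+\|\theta\|_{L^2}^2))$, which is the claimed constant $C_1$.

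For \eqref{eq:resolv_lip_space}, I would consider the difference $D(s):=\mathcal{R}^\theta_{(\tau,s)}(x)-\mathcal{R}^\theta_{(\tau,s)}(y)$, which solves
\begin{equation*}
\dot D(s) = \nabla_x\MF(s,\Phi^\theta_{(0,s)}(x),\theta(s))\,D(s) + \Bigl[\nabla_x\MF(s,\Phi^\theta_{(0,s)}(x),\theta(s))-\nabla_x\MF(s,\Phi^\theta_{(0,s)}(y),\theta(s))\Bigr]\mathcal{R}^\theta_{(\tau,s)}(y),
\end{equation*}
with $D(\tau)=0$. The inhomogeneous term is controlled by combining Assumption~\ref{ass:block2}-$(iv)$, the Lipschitz estimate $|\Phi^\theta_{(0,s)}(x)-\Phi^\theta_{(0,s)}(y)|\leq \bar L\,|x-y|$ from Lemma~\ref{lem:lip_flow_init}, and the already-established bound \eqref{eq:resolv_bound} for $\mathcal{R}^\theta_{(\tau,s)}(y)$; this gives a forcing of size $L_{\bar R}(1+|\theta(s)|^2)\bar L\,C_1\,|x-y|$, again $L^1$ in time. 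Writing $D$ via variation of parameters and applying Gronwall against the coefficient of the homogeneous part yields \eqref{eq:resolv_lip_space}.

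For \eqref{eq:resolv_lip_ctrls}, which I expect to be the delicate step, I set $E(s):=\mathcal{R}^{\theta_1}_{(\tau,s)}(x)-\mathcal{R}^{\theta_2}_{(\tau,s)}(x)$ and derive the analogous inhomogeneous ODE driven by the matrix $\nabla_x\MF(s,\Phi^{\theta_1}_{(0,s)}(x),\theta_1(s))-\nabla_x\MF(s,\Phi^{\theta_2}_{(0,s)}(x),\theta_2(s))$ times $\mathcal{R}^{\theta_2}_{(\tau,s)}(x)$. The main obstacle is that this forcing has to be estimated in $L^1$ by $\|\theta_1-\theta_2\|_{L^2}$, not pointwise. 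I would split it via a telescope through the intermediate point $(\Phi^{\theta_2}_{(0,s)}(x),\theta_1(s))$: the first piece, involving the trajectory gap, is bounded by $L_{\bar R}(1+|\theta_1(s)|^2)\cdot|\Phi^{\theta_1}_{(0,s)}(x)-\Phi^{\theta_2}_{(0,s)}(x)|$, and Lemma~\ref{lem:lip_flow_ctrl} converts the latter into $\bar L\,\|\theta_1-\theta_2\|_{L^2}$; the second piece, involving the control gap at fixed trajectory, is bounded by $L_{\bar R}\,|\theta_1(s)-\theta_2(s)|$ by Assumption~\ref{ass:block2}-$(vi)$. Multiplying by the $L^\infty$ bound $C_1$ for $\mathcal{R}^{\theta_2}_{(\tau,s)}(x)$ and integrating, the first piece yields a contribution of order $(1+\|\theta_1\|_{L^2}^2)\|\theta_1-\theta_2\|_{L^2}$, while the second, after Cauchy--Schwarz in $s$, yields a contribution of order $T^{1/2}\|\theta_1-\theta_2\|_{L^2}$. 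A final Gronwall-type argument against the (again $L^1$) coefficient of $E$ produces \eqref{eq:resolv_lip_ctrls} with a constant $C_3$ depending only on $T,R,\rho$.
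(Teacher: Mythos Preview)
Your proposal is correct and follows essentially the same route as the paper's own proof: for each of the three estimates, the paper applies a test vector $v$, writes the integral form of the linear (or inhomogeneous linear) ODE, splits via the triangle inequality exactly as you describe (including the same telescope through $(\Phi^{\theta_2}_{(0,s)}(x),\theta_1(s))$ for \eqref{eq:resolv_lip_ctrls}), invokes Assumptions~\ref{ass:block2}-$(iv)$,$(vi)$ together with Lemmata~\ref{lem:bound_traj}, \ref{lem:lip_flow_init}, \ref{lem:lip_flow_ctrl}, and concludes by Gronwall. One small imprecision: Assumption~\ref{ass:block2}-$(vi)$ actually gives $L_{\bar R}(1+|\theta_1(s)|+|\theta_2(s)|)\,|\theta_1(s)-\theta_2(s)|$ for the control-gap piece, not just $L_{\bar R}\,|\theta_1(s)-\theta_2(s)|$, so after Cauchy--Schwarz the contribution picks up a factor $(1+2\rho)$ --- harmless, since $C_3$ is allowed to depend on $\rho$.
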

\begin{proof}
We first prove the boundedness of the resolvent map.
Let us fix $v\in \R^d$ with $v\neq 0$, and let us define $\xi(s):=\mathcal{R}^{\theta}_{(\tau,s)}(x)\cdot v$ for every $s\in [0,T]$.
Then, in virtue of Assumption~\ref{ass:block2}$-(vi)$, we have:
\begin{equation*}
    |\xi(s)| 
    \leq |\xi(\tau)| + \int_\tau^s \left|\nabla_x \MF(\sigma,\Phi_{(0,\sigma)}^\theta(x), \theta(\sigma))\right||\xi(\sigma)| \, d\sigma 
    \leq |v|+ L_{\BarR}\int_0^t (1+ \theta(\sigma)^2)|\xi(\sigma)|\, d\sigma,
\end{equation*}
and, by Gronwall's Lemma, we deduce \eqref{eq:resolv_bound}.
Similarly as before, given $x,y\in B_R(0)$ and $v\neq 0$, let us define $\xi^x(s):=\mathcal{R}^{\theta}_{(\tau,s)}(x)\cdot v$ and $\xi^y(s):=\mathcal{R}^{\theta}_{(\tau,s)}(y)\cdot v$ for every $s\in [0,T]$.
Then, we have that
\begin{equation*}
    \begin{split}
    |\xi^x(s) - \xi^y(s)| & \leq \int_\tau^s \left|\nabla_x \MF(\sigma,\Phi_{(0,\sigma)}^\theta(x), \theta(\sigma)) 
    \xi^x(\sigma) -
    \nabla_x \MF(\sigma,\Phi_{(0,\sigma)}^\theta(y), \theta(\sigma)) 
    \xi^y(\sigma)
    \right|
    \, d\sigma \\
    & \leq 
    \int_\tau^s \left|\nabla_x \MF(\sigma,\Phi_{(0,\sigma)}^\theta(x), \theta(\sigma))  -
    \nabla_x \MF(\sigma,\Phi_{(0,\sigma)}^\theta(y), \theta(\sigma)) 
    \right| |\xi^y(\sigma)|
    \, d\sigma \\
    & \quad +
        \int_\tau^s \left|\nabla_x \MF(\sigma,\Phi_{(0,\sigma)}^\theta(x), \theta(\sigma))
    \right| |\xi^x(\sigma)- \xi^y(\sigma)|
    \, d\sigma \\
    & \leq C_1 |v| \int_\tau^s L_{\bar R}(1 + \theta(\sigma)^2 ) \left|\Phi_{(0,\sigma)}^\theta(x) - \Phi_{(0,\sigma)}^\theta(y)
    \right|
    \, d\sigma \\
    & \quad + 
    \int_\tau^s L_{\bar R}(1 + \theta(\sigma)^2 ) |\xi^x(\sigma)- \xi^y(\sigma)| 
    \, d\sigma,
    \end{split}
\end{equation*}
where we used \eqref{eq:resolv_bound} and Assumption~\ref{ass:block2}-($iv$). Hence, combining Lemma~\ref{lem:lip_flow_init} with Gronwall's Lemma, we deduce \eqref{eq:resolv_lip_space}.
Finally, we prove the dependence of the resolvent map on different controls $\theta_1, \theta_2 \in L^2([0,T];\R^m)$. Given $x\in B_R(0)$ and $v\neq 0$, let us define $\xi^{\theta_1}(s):=\mathcal{R}^{\theta_1}_{(\tau,s)}(x)\cdot v$ and $\xi^{\theta_2}(s):=\mathcal{R}^{\theta_2}_{(\tau,s)}(x)\cdot v$ for every $s\in [0,T]$. Then, we compute
\begin{align*}
    |\xi^{\theta_1}(s)-\xi^{\theta_2}(s)| & \leq \int_\tau^s \left|\nabla_x \MF(\sigma,\Phi_{(0,\sigma)}^{\theta_1}(x), \theta_1(\sigma)) 
    \xi^{\theta_1}(\sigma) -
    \nabla_x \MF(\sigma,\Phi_{(0,\sigma)}^{\theta_2}(x), \theta_2(\sigma)) 
    \xi^{\theta_2}(\sigma)
    \right|
    \, d\sigma \\
    & \leq \int_\tau^s \left|\nabla_x \MF(\sigma,\Phi_{(0,\sigma)}^{\theta_1}(x), \theta_1(\sigma)) 
     -
    \nabla_x \MF(\sigma,\Phi_{(0,\sigma)}^{\theta_2}(x), \theta_2(\sigma)) 
    \right| |\xi^{\theta_1}(\sigma)|
    \, d\sigma \\
    & \quad + \int_\tau^s \left|
    \nabla_x \MF(\sigma,\Phi_{(0,\sigma)}^{\theta_2}(y), \theta(\sigma)) 
    \right|
    |\xi^{\theta_1}(\sigma)- \xi^{\theta_2}(\sigma)|
    \, d\sigma \\
        & \leq C_1 |v| \int_\tau^s 
        L_{\bar R}
        (1 + \theta_1(\sigma)^2 ) \left|\Phi_{(0,\sigma)}^{\theta_1}(x) - \Phi_{(0,\sigma)}^{\theta_2}(x)
    \right|\, d\sigma \\
    & \quad +  C_1 |v|
    \int_\tau^s  
        L_{\bar R}(1+ |\theta_1(\sigma)|+ |\theta_2(\sigma)|) |\theta_1(\sigma)- \theta_2(\sigma)|
    \, d\sigma \\
    & \quad + 
    \int_\tau^s L_{\bar R}(1 + \theta(\sigma)^2 )     |\xi^{\theta_1}(\sigma)- \xi^{\theta_2}(\sigma)| 
    \, d\sigma,
\end{align*}
where we used Assumption~\ref{ass:block2}$-(iv)$-$(vi)$.
\end{proof}

\printbibliography
\end{document}